\newcolumntype{M}[1]{>{\centering\arraybackslash}m{#1}} 
\definecolor{linkred}{rgb}{0.7,0.2,0.2}
\definecolor{linkblue}{rgb}{0,0.2,0.6}
\numberwithin{figure}{section}
\DeclareFontFamily{OMS}{rsfs}{\skewchar\font'60}
\DeclareFontShape{OMS}{rsfs}{m}{n}{<-5>rsfs5 <5-7>rsfs7 <7->rsfs10 }{}
\DeclareSymbolFont{rsfs}{OMS}{rsfs}{m}{n}
\DeclareSymbolFontAlphabet{\scr}{rsfs}
\DeclareSymbolFontAlphabet{\scr}{rsfs}
\DeclareMathOperator{\Ann}{Ann}
\DeclareMathOperator{\codim}{codim}
\DeclareMathOperator{\ffi}{fi}
\DeclareMathOperator{\Pic}{Pic}
\DeclareMathOperator{\rank}{rank}
\DeclareMathOperator{\red}{red}
\DeclareMathOperator{\reg}{reg}
\DeclareMathOperator{\sing}{sing}
\DeclareMathOperator{\Spec}{Spec}
\DeclareMathOperator{\pr}{pr}
\DeclareMathOperator{\gr}{gr}
\DeclareMathOperator{\Cl}{Cl}
\newcommand{\cD}{\mathcal D}
\newcommand{\cO}{\mathcal O}
\newcommand{\bC}{\mathbb{C}}
\newcommand{\bP}{\mathbb{P}}
\newcommand{\bQ}{\mathbb{Q}}
\newcommand{\bZ}{\mathbb{Z}}
\newcommand{\bfZ}{\mathbf{Z}}
\newcommand{\bfz}{\mathbf{z}}
\newcommand{\fg}{\mathfrak{g}}
\newcommand{\fh}{\mathfrak{h}}
\newcommand{\fl}{\mathfrak{l}}
\newcommand{\fm}{\mathfrak{m}}
\newcommand{\fX}{\mathfrak{X}}
\newcommand{\aS}{{\sf S}}
\theoremstyle{plain}
\newtheorem{thm}{Theorem}[section]
\newtheorem{conjecture}[thm]{Conjecture}
\newtheorem{cor}[thm]{Corollary}
\newtheorem{defn}[thm]{Definition}
\newtheorem{lem}[thm]{Lemma}
\newtheorem{prop}[thm]{Proposition}
\theoremstyle{remark}
\newtheorem{c-n-d}[thm]{Claim and Definition}
\newtheorem{example}[thm]{Example}
\newtheorem{rem}[thm]{Remark}
\newtheorem*{rem-nonumber}{Remark}
\numberwithin{equation}{thm}
\setlist[enumerate]{label=(\thethm.\arabic*),
	before={\setcounter{enumi}{\value{equation}}},
	after={\setcounter{equation}{\value{enumi}}}}
\newcommand{\factor}[2]{\left. \raise 2pt\hbox{$#1$} \right/\hskip -2pt\raise
	-2pt\hbox{$#2$}}
\author{Baohua Fu}
\address{Baohua Fu, State Key Laboratory of Mathematical Sciences, Morningside
	Center of Mathematics, Academy of Mathematics and Systems Science, Chinese
	Academy of Sciences, Beijing 100190, China;   and School of Mathematical
	Sciences, University of Chinese Academy of Sciences, Beijing, China}
\email{\href{bhfu@math.ac.cn}{bhfu@math.ac.cn}}
\urladdr{\href{http://www.math.ac.cn/people/fbh/}{http://www.math.ac.cn/people/fbh/}}
\author{Jie Liu} %
\address{Jie Liu, Institute of Mathematics, Academy of Mathematics and Systems
	Science, Chinese Academy of Sciences, Beijing, 100190, China}
\email{\href{jliu@amss.ac.cn}{jliu@amss.ac.cn}}
\urladdr{\href{http://www.jliumath.com}{http://www.jliumath.com}}
\keywords{Hamiltonian reduction, symplectic singularity, Hamiltonian slice}
\subjclass[2020]{14B05, 53D20, 14L24, 14J42}
\title[]{Hamiltonian reductions as affine closures of cotangent bundles}
\date{\today}
\begin{document}
	
	\begin{abstract}
         Let $Y$ be an irreducible non-singular affine $G$-variety with a $2$-large action. We show that the Hamiltonian reduction $T^*Y/\!\!/\!\!/G$ is a symplectic variety with terminal singularities, isomorphic to the affine closure of $T^*Z_{\reg}$ where $Z \coloneqq Y/\!/G$. Furthermore, we provide sufficient conditions for the non-existence of a symplectic resolution for such varieties. These results yield three main applications: (i) providing a short proof of G.~Schwarz’s theorem on the graded surjectivity of the push-forward map $\mathcal{D}(Y)^G \to \mathcal{D}(Z)$; (ii) establishing the surjectivity of the symbol map on $Z$; and (iii) confirming the non-linear analog of a conjecture of Kaledin--Lehn--Sorger for $2$-large actions.
	\end{abstract}

	\maketitle
	\tableofcontents
	
	\section{Introduction}

    Symplectic varieties, as introduced by Beauville \cite{Beauville2000a}, are singular counterparts of hyperk\"ahler manifolds and they are playing important roles not only in algebraic symplectic geometry, but also in  geometric representation theory, 3d $\mathcal{N}=4$ supersymmetric gauge theory and so on. 
    
    Recently, we started a systematic investigation of a geometric construction of symplectic varieties through cotangent bundles. Namely for an irreducible non-singular variety $Y$ with $T^*Y$ its cotangent bundle, denote by $\cO(T^*Y)$ the ring of regular functions on $T^*Y$. Then the \emph{affine closure} $\mathcal{Z}_Y$ of $T^*Y$ is defined as the normal affine scheme ${\rm Spec}(\mathcal{O}(T^*Y))$ so that we have the natural affinization morphism 
    \[
    \psi_Y\colon T^*Y \longrightarrow \mathcal{Z}_Y\coloneqq {\rm Spec}(\mathcal{O}(T^*Y)).
    \]
    
    When $Y$ is a non-singular projective variety, it was shown in \cite{FuLiu2025a} that $\mathcal{Z}_Y$ is a symplectic variety if  the tangent bundle $T_Y$ is big and if the projective bundle $\bP T_Y$ is of Fano type.  The most interesting case is where $Y$ is quasi-affine but not affine. In this case, $\psi_Y$ is just an open embedding and, therefore,  $\mathcal{Z}_Y$ is the affine closure of $T^*Y$. It remains a challenging problem to determine when  $\mathcal{Z}_Y$ is a symplectic variety.

    Another geometric way to construct symplectic varieties is through Hamiltonian reductions, which is a powerful tool in both physics and geometry to produce new symplectic manifolds from a Hamiltonian action on a symplectic manifold.  To be more precise, let $(X,\omega)$ be a non-singular  affine symplectic variety on which there exists a Hamiltonian action by a  reductive complex linear algebraic group $G$ so that the associated moment map $\mu: X \to \fg^*$ is $G$-equivariant (see \S\,\ref{s.QHvar}). We denote by $N_X$ the scheme-theoretic fiber $\mu^{-1}(0)$, called the \emph{shell}. The \emph{Hamiltonian reduction} $X/\!\!/\!\!/G$ is by definition the affine GIT quotient:
    \[
    N_X/\!/G = \mu^{-1}(0)/\!/G\coloneqq {\rm Spec}(\mathcal{O}({\mu^{-1}(0)})^G).
    \]
    Even for the case where $X$ is a symplectic linear representation of $G$, the Hamiltonian reduction  $X/\!\!/\!\!/G$ can be non-reduced or reducible or non-normal.

    If $Y$ is an irreducible non-singular affine $G$-variety, its cotangent bundle $T^*Y$ admits a natural Hamiltonian $G$-action. Let $Y/\!/G$ be the affine GIT quotient.  Our first main result in this paper is to bridge the two geometric constructions as follows (see Definition \ref{defn.Large-action} for $2$-large actions):
	
		\begin{thm} \label{thm.main}
		Let $G$ be a reductive group and $Y$ an irreducible non-singular affine $G$-variety that is $2$-large.
		Then the following holds:
		\begin{enumerate}
			\item \label{i1.terminal} The  Hamiltonian reduction $T^*Y/\!\!/\!\!/G$ is a symplectic variety with terminal singularities.
			
			\item \label{i2.affineclosure} The  Hamiltonian reduction $T^*Y/\!\!/\!\!/G$ is  the affine closure of $T^*(Y/\!/G)_{\rm reg}$. In particular, if $Y/\!/G$ is non-singular, then $T^*Y/\!\!/\!\!/G \cong T^*(Y/\!/G)$.
			
			\item \label{i3.singularlocus} The  Hamiltonian reduction $T^*Y/\!\!/\!\!/G$ is non-singular if and only if $Y/\!/G$ is non-singular.
		\end{enumerate}
	\end{thm}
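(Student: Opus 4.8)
The plan is to deduce all three parts from a single ring‑level identification. Throughout write $Z := Y/\!/G = \Spec\cO(Y)^G$, let $\pi\colon Y\to Z$ be the quotient morphism, let $\mu\colon T^*Y\to\fg^*$ be the moment map of the cotangent‑lifted $G$‑action, and let $N:=\mu^{-1}(0)$ be the shell, so that $T^*Y/\!\!/\!\!/G=\Spec\cO(N)^G$. Let $Y^\circ\subseteq Y$ be the open, $G$‑saturated locus of points with trivial stabiliser and closed orbit, put $N^\circ:=N|_{Y^\circ}$ and $Z^\circ:=\pi(Y^\circ)$ (an open subset of $Z$). I will use the following consequences of $2$‑largeness (Definition~\ref{defn.Large-action}), standard in this setting: $\mu$ is flat; $N$ is a reduced, irreducible, normal complete intersection of dimension $2\dim Y-\dim G=2\dim Z$ with rational (Gorenstein) singularities; $G$ acts on $N$ generically freely with closed generic orbit; and $N\setminus N^\circ$ has codimension $\ge 2$ in $N$ (hence $Y\setminus Y^\circ$ has codimension $\ge 2$ in $Y$). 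Since $\pi$ restricts to a $G$‑torsor over $Y^\circ$, the variety $Z^\circ$ is non‑singular, so $Z^\circ\subseteq Z_{\reg}$, and the classical symplectic reduction of a cotangent bundle along a free action identifies $N^\circ/G$ with $T^*Z^\circ$, symplectic forms included. Finally, fibrewise scaling on $T^*Y$ is a $\bC^*$‑action preserving $N$ (the components of $\mu$ are fibrewise linear) and hence acting on $T^*Y/\!\!/\!\!/G$; as the ideal of $N\subseteq T^*Y$ is then contained in the ideal of the zero section, restriction gives a $G$‑equivariant surjection $\cO(N)\twoheadrightarrow\cO(Y)$, so $Z$ embeds as a closed subvariety of $T^*Y/\!\!/\!\!/G$, and it is in fact the whole $\bC^*$‑fixed locus: a $\bC^*$‑fixed point is represented by a $\bC^*$‑stable closed $G$‑orbit in $N$, and scaling the covector to $0$ forces such an orbit into the zero section.

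To prove \ref{i2.affineclosure}, I would argue purely by Hartogs‑type extension. As $Y$ is non‑singular and $Y\setminus Y^\circ$ has codimension $\ge 2$, we get $\cO(Y)=\cO(Y^\circ)$, hence $\cO(Z)=\cO(Z^\circ)$ on taking $G$‑invariants. On a normal affine variety the complement of any open subset of codimension one carries a non‑constant regular function with a pole along it; so $\cO(Z)=\cO(Z^\circ)$ forces $\codim_Z(Z\setminus Z^\circ)\ge 2$, whence $\codim_{Z_{\reg}}(Z_{\reg}\setminus Z^\circ)\ge 2$ and $\codim_{T^*Z_{\reg}}(T^*Z_{\reg}\setminus T^*Z^\circ)\ge 2$; since $T^*Z_{\reg}$ is non‑singular, Hartogs gives $\cO(T^*Z_{\reg})=\cO(T^*Z^\circ)$. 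On the other side, $N$ is normal and $N\setminus N^\circ$ has codimension $\ge 2$, so $\cO(N)=\cO(N^\circ)$ and therefore $\cO(N)^G=\cO(N^\circ)^G=\cO(T^*Z^\circ)$. Combining these,
\[
\cO\bigl(T^*Y/\!\!/\!\!/G\bigr)=\cO(N)^G=\cO(T^*Z^\circ)=\cO(T^*Z_{\reg}),
\]
a finitely generated algebra because $G$ is reductive. Hence $T^*Y/\!\!/\!\!/G=\Spec\cO(T^*Z_{\reg})$ is the affine closure of $T^*Z_{\reg}=T^*(Y/\!/G)_{\reg}$, inside which $T^*Z^\circ=N^\circ/G$ sits as a dense open subset. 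If $Y/\!/G$ is non‑singular then $Z_{\reg}=Z$ is affine, so $T^*Z$ is affine and $T^*Y/\!\!/\!\!/G\cong T^*(Y/\!/G)$.

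For \ref{i1.terminal}: $\cO(N)^G$ is normal (invariants of a normal domain under a reductive group), so $X:=T^*Y/\!\!/\!\!/G$ is normal; by Boutot's theorem it has rational singularities, since $N$ does; and the canonical symplectic form on the dense open $T^*Z^\circ\subseteq X_{\reg}$, whose complement in $X$ has codimension $\ge 2$ (again because $X$ is normal affine with $\cO(X)=\cO(T^*Z^\circ)$), extends to a symplectic form on $X_{\reg}$ and, by rationality of the singularities, to a regular $2$‑form on any resolution, so $X$ is a symplectic variety --- alternatively one invokes the general principle that the Hamiltonian reduction of such a well‑behaved action is symplectic. By Namikawa's criterion, terminality of $X$ is equivalent to its singular locus having codimension $\ge 4$. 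To establish this I would stratify $N$ by stabiliser type: by Luna's \'etale slice theorem, near the image of a closed orbit $G\!\cdot\!v\subseteq N$ with stabiliser $H$ the variety $X$ is \'etale‑locally the Hamiltonian reduction $W/\!\!/\!\!/H$ of the symplectic slice representation $W$ of $H$, so $X$ is singular only along the images of the strata $N_{(H)}$ with $H\ne 1$ and singular slice reduction; moreover the image of $N_{(H)}$ in $X$ has codimension exactly $\codim_N N_{(H)}-\dim H$. Terminality therefore reduces to the inequality $\codim_N N_{(H)}\ge\dim H+4$ for every relevant stabiliser type, and this is precisely what the (recursive, slice‑stable) definition of $2$‑largeness secures --- the mechanism being that passing to $T^*Y$ and to $\mu^{-1}(0)$ roughly doubles the governing codimensions, promoting the visible bound $\ge 2$ to $\ge 4$.

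Finally, \ref{i3.singularlocus}: if $Y/\!/G$ is non‑singular then $X\cong T^*(Y/\!/G)$ is non‑singular by \ref{i2.affineclosure}; conversely, if $X$ is non‑singular, then by the first paragraph its $\bC^*$‑fixed locus is exactly $Z=Y/\!/G$, and the fixed locus of a $\bC^*$‑action on a non‑singular variety is non‑singular (Bia{\l}ynicki--Birula), so $Y/\!/G$ is non‑singular. The one genuinely technical point I expect to be the obstacle is the codimension bound $\codim_N N_{(H)}\ge\dim H+4$ underlying terminality: everything else is Hartogs‑type extension once the structure of the shell is in hand, whereas this bound requires unwinding how $2$‑largeness propagates to slice representations along the stabiliser stratification.
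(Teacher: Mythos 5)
Your overall architecture is sound and closely parallels the paper's, but there are two genuine gaps, one of which invalidates the argument as written and one of which you acknowledge.

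\textbf{The trivial-stabiliser assumption.} You define $Y^\circ$ as the locus of points with \emph{trivial} stabiliser and closed orbit, and you assert as a ``consequence of $2$-largeness'' that $G$ acts on $N$ \emph{generically freely} and that $Y\setminus Y^\circ$ has codimension $\geq 2$. This is false: $2$-largeness only requires FPIG, i.e.\ \emph{finite} principal isotropy. If the principal isotropy group $H$ is a nontrivial finite group (a case the theorem certainly covers; take any finite $G\neq\{e\}$ acting trivially on $Y$), then $Y^\circ=\emptyset$, $Z^\circ=\emptyset$, and every subsequent step of your proof of part~\ref{i2.affineclosure} is vacuous. The paper works instead with $Y_{\pr}$, and the essential extra input is Lemma~\ref{lem.FPIG-pri-stratum}: along a Hamiltonian slice $U$ at a principal point, the finite stabiliser $G_x$ acts \emph{trivially} on $T_xU$, so that the reduction $N_{\pr}/\!/G$ is still non-singular and carries a symplectic form, even though $Y_{\pr}\to Z_{\pr}$ is not a torsor. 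Without this slice argument, the ``classical symplectic reduction along a free action'' you cite does not apply. Relatedly, your justification of the symplectic-variety conclusion in part~\ref{i1.terminal} via Boutot needs $N$ to have rational singularities, which is not established (a normal Gorenstein lci need not have rational singularities); the paper avoids this entirely by applying Flenner's extension theorem and Namikawa's criterion directly to $N/\!/G$, using only the codimension-$\geq 4$ bound on the singular locus there.

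\textbf{The terminality codimension bound.} You correctly identify the crux of part~\ref{i1.terminal} as the inequality forcing the non-principal strata of $M$ to have codimension $\geq 4$, and you correctly predict that this is where the real work lies, but you do not supply a proof. The paper's Lemmas~\ref{lem.Slice-iso-stratum}, \ref{lem.dim-iso-stratum}, and~\ref{lem.Ham-cotangent}\ref{i2.lem-2-large-N} convert the stratum-dimension question into the bound $\codim(T_x(T^*Y))^{G_x}\geq 2(\dim G-\dim\operatorname{Norm}_G(G_x)+\dim G_x)+4$, whose proof uses $\dim Y^{G_x}\leq\dim Y-\dim G+\dim\operatorname{Norm}_G(G_x)-\dim G_x-2$, itself following the arguments of \cite{HerbigSchwarzSeaton2020}. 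This is not automatic from the definition and needs to be proved, not assumed.

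Where your proposal is correct, the routes diverge modestly from the paper's. For part~\ref{i2.affineclosure}, the paper constructs the isomorphism $T^*Z_{\pr}\to M^\diamond$ explicitly via the cotangent map $d^*\pi_Y$ and then verifies the two codimension estimates directly from $2$-principality and $2$-modularity, whereas you deduce the second codimension bound from the identity $\cO(Z)=\cO(Z^\circ)$ on a normal affine variety — a valid shortcut (modulo the $Y^\circ$ issue), though it requires the not entirely trivial fact that removing a prime divisor from a normal affine variety strictly enlarges the ring of regular functions. For part~\ref{i3.singularlocus}, both you and the paper identify the zero section $Z\hookrightarrow M$ with the $\bC^*$-fixed locus $M^{\bC^*}$; the paper then finishes with an elementary tangent-space count at a singular point of $Z$, while you invoke smoothness of fixed loci (Bia\l{}ynicki--Birula). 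Both routes are valid, and your fixed-locus argument is arguably cleaner, so that part reads as a genuine alternative.
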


    The $2$-largeness condition in Theorem \ref{thm.main} is not overly restrictive: as shown in \cite[Corollary 11.6 (2)]{Schwarz1995} (see also \cite[Theorem 3.6]{HerbigSchwarzSeaton2020}), most $G$-modules are in fact $2$-large, with only finitely many exceptions under mild assumptions. On the other hand, for the special case in which $Y$ is a linear representation of $G$, part \ref{i1.terminal} of Theorem \ref{thm.main} was proved in \cite{HerbigSchwarzSeaton2020} (see also \cite{HerbigSchwarzSeaton2024a,HerbigSchwarzSeaton2024} for other related results).

    As an application of Theorem~\ref{thm.main}, we investigate the ring of differential operators on the quotient $Y/\!/G$. To state our results precisely, we recall the following notation (see also \S\,\ref{s.DO} for further background). For an irreducible normal variety $W$, let $\cD(W)$ denote the ring of global differential operators on $W$, and define the graded $\bC$-algebra
\[
S(W)\coloneq \bigoplus_{p\geq 0} H^0(W_{\reg},\aS^p T_{W_{\reg}}) = \cO(T^*W_{\reg}).
\]

	\begin{thm}
		\label{thm.Diff-Quot}
		Let $G$ be a reductive group and $Y$ an irreducible non-singular affine $G$-variety that is $2$-large. Set $Z\coloneqq Y/\!/G$ with the quotient map $\pi_Y\colon Y\rightarrow Z$.
		\begin{enumerate}
			\item \cite[Theorem 0.4]{Schwarz1995} The canonical morphism $(\pi_Y)_*\colon \cD(Y)^G \rightarrow \cD(Z)$  is graded surjective.
			
			\item The symbol map $\sigma_Z \colon \cD(Z)\to S(Z)$ is surjective.
		\end{enumerate}
	\end{thm}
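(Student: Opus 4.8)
The plan is to deduce both statements from the description of associated graded rings supplied by Theorem~\ref{thm.main}. Equip $\cD(Y)$ and $\cD(Z)$ with their order filtrations. These are $G$-stable, and as $G$ is reductive $\gr(\cD(Y)^G)=(\gr\cD(Y))^G=\cO(T^*Y)^G$; on the other hand, $Z$ being normal, the symbol map identifies $\gr\cD(Z)$ with a graded subalgebra of $S(Z)=\cO(T^*Z_{\reg})$, so that part~(2) says precisely that this subalgebra is all of $S(Z)$. The $G$-action on $Y$ provides the quantum comoment map $\fg\to\cD(Y)_{\leq 1}$, $\xi\mapsto\xi_Y$, whose principal symbols are the components of the classical moment map $\mu\colon T^*Y\to\fg^*$; writing $N\coloneqq\mu^{-1}(0)\subseteq T^*Y$ for the shell, $\cO(N)^G$ is a graded quotient of $\cO(T^*Y)^G$, and part~\ref{i2.affineclosure} of Theorem~\ref{thm.main} identifies it with $S(Z)$, a normal domain by part~\ref{i1.terminal}. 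Finally, $(\pi_Y)_*$ is a filtered algebra homomorphism: a $G$-invariant operator $D$ preserves $\cO(Y)^G=\cO(Z)$ because it commutes with $G$, and Grothendieck's iterated-commutator criterion, applied to functions from $\cO(Z)\subseteq\cO(Y)$, shows that $(\pi_Y)_*D$ has order $\leq d$ whenever $D$ does.

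Everything then rests on the compatibility of $(\pi_Y)_*$ with reduction: for $D\in\cD(Y)^G_{\leq d}$, the degree-$d$ symbol $\sigma_d\big((\pi_Y)_*D\big)\in S(Z)_d$ is the image of $\sigma_d(D)\in\cO(T^*Y)^G_d$ under the graded surjection $\cO(T^*Y)^G\twoheadrightarrow\cO(N)^G$ followed by the isomorphism $\cO(N)^G\cong S(Z)$ of Theorem~\ref{thm.main}. To prove this I would pass to the $G$-saturated open subset $Y^\circ\subseteq Y$ of points with closed orbit and trivial stabiliser. By $2$-largeness (Definition~\ref{defn.Large-action}) one has $Y^\circ\neq\varnothing$ with $\codim_Y(Y\setminus Y^\circ)\geq 2$, the image $Z^\circ\coloneqq\pi_Y(Y^\circ)$ is an open subset of $Z_{\reg}$ with $\codim_Z(Z\setminus Z^\circ)\geq 2$, and $\pi_Y\colon Y^\circ\to Z^\circ$ is a $G$-torsor. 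Over such a torsor $(\pi_{Y^\circ})_*\colon\cD(Y^\circ)^G\to\cD(Z^\circ)$ is surjective with kernel $(\cD(Y^\circ)\,\fg)^G$; moreover $\mu$ is submersive on $T^*Y^\circ$, so $N\cap T^*Y^\circ$ is a reduced complete intersection on which the components of $\mu$ form a regular sequence, whence the symbol ideal of $\cD(Y^\circ)\,\fg$ is the ideal of $N\cap T^*Y^\circ$ and the classical reduction $(N\cap T^*Y^\circ)/G\cong T^*Z^\circ$ computes the associated graded of $(\pi_{Y^\circ})_*$ — which is the asserted identity over $Y^\circ$. Since the restriction maps $\cO(T^*Y)^G\hookrightarrow\cO(T^*Y^\circ)^G$, $\cO(N)^G\hookrightarrow\cO(N\cap T^*Y^\circ)^G$, $S(Z)\hookrightarrow\cO(T^*Z^\circ)$ and $\cD(Z)\hookrightarrow\cD(Z^\circ)$ are all injective — by density of the relevant open subsets, normality, and Theorem~\ref{thm.main} — the identity, and the compatibility of the isomorphism $\cO(N)^G\cong S(Z)$ with restriction to the free locus, propagate back to all of $Y$.

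Granting the compatibility, both statements follow by a lift-and-descend induction. Given a homogeneous $\theta\in S(Z)_d$, reductivity of $G$ lifts it — through $\cO(N)^G\cong S(Z)$ — to $s\in\cO(T^*Y)^G_d=\gr_d\cD(Y)^G$, and then to $\tilde D\in\cD(Y)^G_{\leq d}$ with $\sigma_d(\tilde D)=s$; by the compatibility, $\sigma_d\big((\pi_Y)_*\tilde D\big)=\theta$, so every homogeneous element of $S(Z)$ is a symbol, which is part~(2). For part~(1), $\cD(Z)_{\leq 0}=\cO(Z)=\cO(Y)^G=\cD(Y)^G_{\leq 0}$, and for $E\in\cD(Z)_{\leq d}$ one may choose $\tilde D\in\cD(Y)^G_{\leq d}$ as above with $\sigma_d(\tilde D)$ mapping to $\sigma_d(E)$, so that $E-(\pi_Y)_*\tilde D\in\cD(Z)_{\leq d-1}$; induction on $d$ then shows that $(\pi_Y)_*$ carries $\cD(Y)^G_{\leq d}$ onto $\cD(Z)_{\leq d}$ for every $d$, i.e.\ $(\pi_Y)_*$ is graded surjective.

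I expect the compatibility statement of the second paragraph to be the crux. One has to set up the descent of differential operators along the torsor $Y^\circ\to Z^\circ$, check that it is strictly compatible with the order filtration — where the submersivity of $\mu$ on the free locus, equivalently $N\cap T^*Y^\circ$ being a complete intersection, is the essential ingredient — and then verify that $\cO(T^*Y)^G$, $S(Z)$ and $\cD(Z)$ are each recovered from their restrictions to the big open subsets $Y^\circ$ and $Z^\circ$. The existence of such a $Y^\circ$ with complement of codimension $\geq 2$, together with the identification $\cO(N)^G\cong S(Z)$, is exactly where the $2$-largeness hypothesis and Theorem~\ref{thm.main} are used.
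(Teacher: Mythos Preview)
Your strategy coincides with the paper's: both hinge on the commutative square
\[
\begin{tikzcd}[row sep=large,column sep=large]
\gr(\cD(Y)^G) \arrow[d,"{\gr(\pi_Y)_*}"'] \arrow[r,"\alpha"]
  &  \cO(N)^G \arrow[d,"\iota","\cong"'] \\
\gr\cD(Z) \arrow[r,hookrightarrow,"\bar{\sigma}_Z"]
  &  S(Z),
\end{tikzcd}
\]
with $\alpha$ surjective (reductivity of $G$ plus $N$ a complete intersection) and $\iota$ the isomorphism of Theorem~\ref{thm.main}; both conclusions then drop out at once. The paper simply asserts commutativity (``one easily checks''), and indeed it is a one-line pointwise symbol computation over $Z_{\pr}$ requiring no torsor machinery: for $D\in\cD(Y)^G_{\leq d}$, $z\in Z_{\pr}$, $\eta\in T_z^*Z$ and any $y\in\pi_Y^{-1}(z)$, the definition of $(\pi_Y)_*$ gives $\sigma_d\big((\pi_Y)_*D\big)(\eta)=\sigma_d(D)(\eta\circ d_y\pi_Y)$, and this is exactly $(\iota\circ\alpha)(\sigma_d(D))$ evaluated at $\eta$, by the explicit description of $\iota$ in the proof of Theorem~\ref{thm.main}. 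Your detour through descent of $\cD$-modules along a torsor is therefore more apparatus than the situation calls for.

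One genuine slip in your verification: the open set $Y^\circ$ of points with closed orbit and \emph{trivial} stabiliser may well be empty. Two-largeness only guarantees that the principal isotropy class $(H)$ is finite; every point of $Y_{\pr}$ has stabiliser conjugate to $H$, so if $H\neq\{e\}$ there is no free locus at all, and your codimension-two claim for $Y\setminus Y^\circ$ fails. The correct big open set is $Y_{\pr}$, over which $\pi_Y$ is a smooth $G$-fibre bundle with fibre $G/H$ --- and, as noted above, smoothness of $\pi_Y$ over $Z_{\pr}$ is all the symbol check actually uses.
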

    
    We next investigate the ($\bQ$-)factoriality of the Hamiltonian reduction $T^*Y/\!\!/\!\!/G$ and use this to deduce the non‑existence of symplectic resolutions under certain conditions.  Recall that the \emph{abelianization} of $G$ is $G_{\operatorname{ab}}\coloneqq G/[G,G]$. A group $G$ is called \emph{perfect} if $G_{\operatorname{ab}}=\{e\}$, i.e., $G=[G,G]$. 
	
	\begin{thm}
		\label{thm.factoriality}
		Let $G$ be a reductive group and $Y$ an irreducible non-singular affine $G$-variety that is $2$-large. Assume that $\cO(Y)^{\times}=\bC^{\times}$ or $G$ is connected.
		\begin{enumerate}
			\item If $G_{\operatorname{ab}}$ is finite and $\Pic(Y)$ is finite, then $\Cl(T^*Y/\!\!/\!\!/G)$ is finite. In particular, $T^*Y/\!\!/\!\!/G$ is $\bQ$-factorial.
			
			\item If $G$ is perfect and $\Pic(Y)$ is trivial, then $\Cl(T^*Y/\!\!/\!\!/G)$ is trivial; that is, $T^*Y/\!\!/\!\!/G$ is factorial.
		\end{enumerate}
	\end{thm}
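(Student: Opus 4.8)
The plan is to identify $\Cl(T^{*}Y/\!\!/\!\!/G)$ with the group $\Pic^{G}(Y)$ of $G$-linearised line bundles on $Y$, and then to read off finiteness (resp.\ triviality) of the class group from the hypotheses on $\Pic(Y)$ and $G_{\operatorname{ab}}$. Throughout write $W\coloneqq T^{*}Y/\!\!/\!\!/G$ and $Z\coloneqq Y/\!/G$. Note that once $\Cl(W)$ is finite (resp.\ trivial), $\bQ$-factoriality (resp.\ factoriality) is immediate: every Weil divisor is then torsion (resp.\ principal) in $\Cl(W)$, hence $\bQ$-Cartier (resp.\ Cartier).

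First I would reduce to computing $\Cl(Z)$. The variety $W$ is normal, being an affine closure, and by part~\ref{i2.affineclosure} of Theorem~\ref{thm.main} it is the affine closure of $T^{*}Z_{\reg}$. Using the structure of the reduction established in the proof of Theorem~\ref{thm.main} — the natural morphism $W\to Z$ restricts to the cotangent bundle over $Z_{\reg}$ and is equidimensional, so $W\setminus T^{*}Z_{\reg}$ lies over $Z\setminus Z_{\reg}$ — the open immersion $T^{*}Z_{\reg}\hookrightarrow W$ has complement of codimension $\geq 2$. Since $T^{*}Z_{\reg}\to Z_{\reg}$ is a vector bundle and $Z$ is normal, this yields $\Cl(W)\cong\Cl(T^{*}Z_{\reg})\cong\Cl(Z_{\reg})\cong\Cl(Z)$.

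Next I would compute $\Cl(Z)\cong\Pic^{G}(Y)$. Because the action is $2$-large, there is a big open $Z^{\circ}\subseteq Z$ (complement of codimension $\geq 2$) over which $\pi_{Y}\colon Y\to Z$ restricts to a $G$-torsor $Y^{\circ}\to Z^{\circ}$, with $Y^{\circ}\coloneqq\pi_{Y}^{-1}(Z^{\circ})$ likewise big open in $Y$. Faithfully flat descent of line bundles along this torsor gives $\Pic(Z^{\circ})\cong\Pic^{G}(Y^{\circ})$, and combining this with the codimension estimates, the smoothness of $Y$ and the normality of $Z$ gives $\Cl(Z)=\Cl(Z^{\circ})=\Pic(Z^{\circ})\cong\Pic^{G}(Y^{\circ})=\Pic^{G}(Y)$, so $\Cl(W)\cong\Pic^{G}(Y)$. (Alternatively one can work directly with $\mu^{-1}(0)\subseteq T^{*}Y$, which for a $2$-large action is irreducible and normal, is a $G$-torsor over a big open subset of $W$, and retracts $G$-equivariantly onto a locus fibring as a vector bundle over the free locus of $Y$; this gives the same identification.) It then remains to analyse $\Pic^{G}(Y)$ via the exact sequence $0\to X^{*}(G)\to\Pic^{G}(Y)\to\Pic(Y)$, where $X^{*}(G)=\Hom(G,\bC^{\times})=\Hom(G_{\operatorname{ab}},\bC^{\times})$; the hypothesis that $\cO(Y)^{\times}=\bC^{\times}$ or that $G$ is connected is precisely what forces the kernel to be $X^{*}(G)$ (otherwise non-constant units of $\cO(Y)$ could interfere through coboundaries). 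In part~(1), finiteness of $G_{\operatorname{ab}}$ makes $X^{*}(G)$ finite, so with $\Pic(Y)$ finite the group $\Pic^{G}(Y)$ is an extension of a finite group by a finite group, hence finite; thus $\Cl(W)$ is finite and $W$ is $\bQ$-factorial. In part~(2), perfectness of $G$ makes $X^{*}(G)=0$, so with $\Pic(Y)=0$ one gets $\Pic^{G}(Y)=0$; thus $\Cl(W)=0$ and $W$ is factorial.

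I expect the main obstacle to be the package of codimension estimates underpinning the two reduction steps: one must verify that the non-free locus, the non-closed-orbit locus, their preimages in $T^{*}Y$ (resp.\ in $\mu^{-1}(0)$), and their images in $Y$, $Z$ and $W$ all have codimension at least $2$, so that each can be excised without affecting divisor class groups. This is exactly the role of $2$-largeness, and it has to be extracted carefully in tandem with the description of $\mu^{-1}(0)$ and of the morphism $W\to Z$ supplied by Theorem~\ref{thm.main}. Once those estimates are in place, the remaining ingredients — torsor descent, vector-bundle invariance of class groups, and the bookkeeping with characters — are formal.
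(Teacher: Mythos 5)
Your high-level strategy coincides with the paper's: reduce $\Cl(T^*Y/\!\!/\!\!/G)$ to $\Cl(Z)$ via Theorem~\ref{thm.main}, then control $\Cl(Z)$ by equivariant Picard groups on $Y$, and finally bound the latter in terms of $\Pic(Y)$ and the character group of $G$. The first and last steps are fine. The middle step, however, contains a genuine error.

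You assert that over a big open $Z^\circ \subseteq Z$ the quotient map restricts to a $G$-\emph{torsor}, and deduce $\Cl(Z) \cong \Pic_G(Y)$ by faithfully flat descent. But $2$-largeness only gives FPIG, i.e.\ \emph{finite} principal isotropy $H$, not trivial $H$. Over $Z_{\pr}$ the map $Y_{\pr} \to Z_{\pr}$ is a $G$-fibre bundle with fibre $G/H$, which is a torsor only when $H = \{e\}$. The claimed isomorphism is therefore false in general: take $G$ a nontrivial finite group acting trivially on a smooth affine $Y$ (this is $2$-large), so $Z = Y$, $T^*Y/\!\!/\!\!/G = T^*Y$ and $\Cl(T^*Y/\!\!/\!\!/G) = \Pic(Y)$, whereas $\Pic_G(Y) \cong \Pic(Y) \oplus \mathfrak{X}(G)$ is strictly larger. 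What is true — and this is what the paper uses, via \cite[Proposition~4.2]{KnopKraftVust1989} — is that pullback gives an \emph{injection} $\Pic(Z_{\pr}) \hookrightarrow \Pic_G(Y_{\pr})$, valid for any affine GIT quotient by a reductive group without any torsor hypothesis. Since you only need finiteness (resp.\ triviality) of $\Cl(Z)$, the injection is enough, and replacing your isomorphism claim by this inclusion repairs the argument. A second, more minor, point: the kernel of $\Pic_G(Y) \to \Pic(Y)$ is $H^1_{\operatorname{alg}}(G, \cO(Y)^\times)$, which under the stated hypotheses is a \emph{quotient} of $\mathfrak{X}(G)$ (via the exact sequence $\mathfrak{X}(G) \to H^1_{\operatorname{alg}}(G,\cO(Y)^\times) \to H^1(G/G^0, E(Y))$ of \cite[Proposition~2.3]{KnopKraftVust1989}); calling it exactly $\mathfrak{X}(G)$ is more than is shown, but again surjectivity from $\mathfrak{X}(G)$ suffices for finiteness and triviality.
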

	
	We present a partial converse of Theorem \ref{thm.factoriality} in Corollary \ref{cor.Conv-Thm-factoriality}. For $Y$ a $3$-large linear representation of $G$, the local factoriality and $\bQ$-factoriality of $T^*Y/\!\!/\!\!/G$ were already established in \cite[Theorem 1.2]{BellamySchedler2019}. By combining Theorem \ref{thm.main} and Theorem \ref{thm.factoriality}, we obtain the following result.
    
    \begin{thm}
    \label{thm.KLSconj-2large}
        Let $G$ be a semisimple group, and let $Y$ be an irreducible non-singular affine $G$-variety which is $2$-large. Assume that $\Pic(Y)$ is finite. Then $T^*Y/\!\!/\!\!/G$ admits a symplectic resolution if and only if $Y/\!/G$ is non-singular.
    \end{thm}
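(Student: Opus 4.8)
The plan is to deduce the statement from Theorem~\ref{thm.main} and Theorem~\ref{thm.factoriality}, together with the standard rigidity principle that a $\bQ$-factorial symplectic variety with terminal singularities admits no symplectic resolution unless it is already non-singular. The ``if'' direction should be immediate: if $Z \coloneqq Y/\!/G$ is non-singular, then part~\ref{i2.affineclosure} of Theorem~\ref{thm.main} identifies $T^*Y/\!\!/\!\!/G$ with $T^*Z$, so the identity morphism is a symplectic resolution. Hence the real content lies in the converse.

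For the converse, I would suppose that $f\colon W\to T^*Y/\!\!/\!\!/G$ is a symplectic resolution and argue as follows. First, verify the hypotheses of part~(1) of Theorem~\ref{thm.factoriality}: a semisimple group is connected and perfect, so $G_{\mathrm{ab}}$ is trivial, in particular finite, and $\Pic(Y)$ is finite by assumption; therefore $\Cl(T^*Y/\!\!/\!\!/G)$ is finite and $T^*Y/\!\!/\!\!/G$ is $\bQ$-factorial. Second, recall from part~\ref{i1.terminal} of Theorem~\ref{thm.main} that $T^*Y/\!\!/\!\!/G$ is a symplectic variety with terminal singularities. Since every symplectic resolution is crepant, terminality forbids $f$ from contracting a divisor, so $f$ is a small proper birational morphism onto a $\bQ$-factorial variety and is thus an isomorphism; hence $T^*Y/\!\!/\!\!/G$ is non-singular. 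Finally, part~\ref{i3.singularlocus} of Theorem~\ref{thm.main} translates this into the non-singularity of $Y/\!/G$, which closes the loop.

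Most of this is bookkeeping once the two theorems are granted; the steps requiring care are the appeal to the crepancy of symplectic resolutions and to the lemma that a small proper birational morphism onto a $\bQ$-factorial variety is an isomorphism---both standard, provided one takes the symplectic resolution to be a projective morphism in our affine setting, a convention worth stating explicitly---and the elementary check that a semisimple $G$ satisfies the connectedness and finite-abelianization conditions feeding Theorem~\ref{thm.factoriality}. I do not expect either to create real difficulty, so the only thing I would be careful about is citing the rigidity principle in exactly the form needed.
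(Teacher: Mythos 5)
Your proposal is correct, and it follows essentially the same skeleton as the paper: establish $\bQ$-factoriality via Theorem~\ref{thm.factoriality} (using that a semisimple $G$ is connected with trivial $G_{\operatorname{ab}}$), establish terminal symplectic singularities via Theorem~\ref{thm.main}~\ref{i1.terminal}, invoke a rigidity principle to conclude that a symplectic resolution must be an isomorphism, and then translate back through Theorem~\ref{thm.main}~\ref{i3.singularlocus}. The only divergence is in how the rigidity step is justified: the paper cites \cite[Corollary 1.3]{Fu03} directly, which says a locally $\bQ$-factorial singular symplectic variety with singular locus not of pure codimension two admits no symplectic resolution, combined with the fact that terminal singularities have singular locus of codimension at least three. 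You instead unpack the underlying MMP argument: a symplectic resolution is crepant, a crepant resolution of a terminal variety extracts no divisor and is therefore small, and a small proper birational morphism to a $\bQ$-factorial variety is an isomorphism. The two are equivalent in substance (your argument is essentially the proof of the cited corollary), but your version is more self-contained, at the modest cost of having to state the standing convention that symplectic resolutions are proper (resp.\ projective), which you correctly flag; the paper's version delegates this and the bookkeeping about pure codimension to the reference.
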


     Theorem \ref{thm.KLSconj-2large} yields a positive answer  to a non-linear analog for all $2$-large actions of the following conjecture of D.~Kaledin, M.~Lehn and C.~Sorger, which in the linear case follows from \cite[Corollary 1.3]{BellamySchedler2019}.
    
    \begin{conjecture}\label{conj.KLS}
        Let $G$ be a semisimple group and let $V$ be a finite dimensional representation of $G$. If there exists a symplectic resolution of $T^*V/\!\!/\!\!/G$, then $V/\!/G$ is non-singular.
    \end{conjecture}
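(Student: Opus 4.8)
The plan is to deduce Conjecture~\ref{conj.KLS} from Theorem~\ref{thm.KLSconj-2large} applied with $Y=V$, after disposing of the finitely many representations to which that theorem does not apply directly.

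First I would make two harmless reductions. Writing $V=V^{G}\oplus V'$, the trivial summand splits off as a factor $\bA^{\dim V^{G}}$ of $V/\!/G$ and as a factor $T^{*}\bA^{\dim V^{G}}$ of $T^{*}V/\!\!/\!\!/G$, so replacing $V$ by $V'$ changes neither the smoothness of $V/\!/G$ nor the existence of a symplectic resolution of $T^{*}V/\!\!/\!\!/G$; similarly the kernel of $G\to\GL(V)$ is central and finite, acts trivially on $T^{*}V$, and leaves $\fg$ and the moment map untouched, so one may assume the action is faithful. With these reductions in force, Schwarz's classification \cite[Corollary~11.6(2)]{Schwarz1995} shows that the $G$-action on $V$ is $2$-large, apart from finitely many exceptional pairs $(G,V)$.

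Suppose next that the $G$-action on $V$ is $2$-large. Then $V$ is an irreducible non-singular affine $G$-variety, $\Pic(V)$ is trivial (hence finite), and $G$ is semisimple, so Theorem~\ref{thm.KLSconj-2large} applies with $Y=V$ and yields that $T^{*}V/\!\!/\!\!/G$ admits a symplectic resolution if and only if $V/\!/G$ is non-singular; the ``only if'' direction is precisely Conjecture~\ref{conj.KLS}. Unwinding, this rests on part~\ref{i1.terminal} of Theorem~\ref{thm.main} (so that $T^{*}V/\!\!/\!\!/G$ is a symplectic variety with terminal singularities), on the factoriality provided by Theorem~\ref{thm.factoriality} (applicable since a semisimple group is perfect, $\Pic(V)$ is trivial and $\cO(V)^{\times}=\bC^{\times}$), and on the standard fact that a $\bQ$-factorial terminal symplectic variety carrying a symplectic resolution is already smooth, whence $V/\!/G$ is smooth by part~\ref{i3.singularlocus} of Theorem~\ref{thm.main}.

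It remains to treat the finitely many exceptional $(G,V)$. For each such pair one checks the dichotomy directly: either $V/\!/G$ is already non-singular and there is nothing to prove, or one rules out a symplectic resolution of $T^{*}V/\!\!/\!\!/G$ by hand: for instance by computing $\Cl(T^{*}V/\!\!/\!\!/G)$ in that specific case (redoing the codimension bookkeeping that Theorem~\ref{thm.factoriality} carries out uniformly under the $2$-largeness hypothesis) to obtain $\bQ$-factoriality, and then invoking terminality as above; one may alternatively read these cases off from \cite[Corollary~1.3]{BellamySchedler2019}. I expect this finite case analysis to be the main obstacle, since it falls outside the scope of Theorems~\ref{thm.main} and~\ref{thm.factoriality} as stated and must be carried out representation by representation, though the exceptional list being explicit, each instance reduces to a concrete computation. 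Combining the $2$-large case with this verification establishes Conjecture~\ref{conj.KLS}.
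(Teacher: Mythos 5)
The statement in question is stated in the paper as a \emph{conjecture}, and the paper does not itself prove it. The sentence introducing Conjecture~\ref{conj.KLS} explicitly attributes the linear case --- which is all the conjecture covers --- to \cite[Corollary~1.3]{BellamySchedler2019}, and what the paper establishes is only Theorem~\ref{thm.KLSconj-2large}, the non-linear analogue for $2$-large actions. So there is no ``paper's own proof'' for you to match, and your attempt must stand on its own.

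On its own terms the proposal is incomplete, and you yourself say so: the ``finite case analysis'' for the representations that fail $2$-largeness is not a routine afterthought but is precisely the content of the Bellamy--Schedler theorem you would like to avoid invoking. Your reduction ``$2$-large $\Rightarrow$ Theorem~\ref{thm.KLSconj-2large} $\Rightarrow$ Conjecture~\ref{conj.KLS}'' is correct, but for a non-$2$-large $V$ the entire chain Theorem~\ref{thm.main}~$\to$~Theorem~\ref{thm.factoriality}~$\to$~Theorem~\ref{thm.KLSconj-2large} collapses: as the paper's own example in \S\,\ref{ss.Ex-Sharp} shows ($V=(\mathfrak{sl}_2)^2$, $G=\PGL_2$, only $1$-large), the reduction $T^*V/\!\!/\!\!/G$ can fail to have terminal singularities, so the ``terminal $+$ $\bQ$-factorial $\Rightarrow$ no symplectic resolution unless smooth'' mechanism is simply unavailable, and the putative $\Cl$-computation you mention cannot be done by the paper's methods. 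Handing these cases off to ``a concrete computation'' or to \cite[Corollary~1.3]{BellamySchedler2019} is a citation, not a proof; as written, the argument leaves the conjecture open in exactly the regime where the paper's machinery does not apply.

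Two smaller remarks. First, your claim that the kernel of $G\to\GL(V)$ is ``central and finite'' is false for general semisimple $G$: an entire simple factor of $G$ can act trivially on $V$ even after discarding $V^G$. The reduction is nonetheless valid, because for $\xi$ in the Lie algebra of the kernel the velocity field $V_\xi$ vanishes identically, so $H_\xi\equiv 0$, the shell and its GIT quotient are unchanged when one passes to $G/\ker$, and $G/\ker$ is again semisimple; you should state it this way. Second, the reduction by the trivial summand requires the (true but not entirely formal) fact that a smooth symplectic factor can be split off from a symplectic resolution; this deserves a reference if you wish the step to be airtight.
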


    \subsection*{Acknowledgments}
	   Both authors are supported by the National Key Research and Development Program of China (No. 2025YFA1017302),  the CAS Project for Young Scientists in Basic Research (No. YSBR-033) and the NSFC grant (No. 12288201). J.~Liu is supported  by the Youth Innovation Promotion Association CAS. We thank G.~Bellamy and G.~Schwarz for comments on a first version of this text.
 
	\section{Quasi-Hamiltonian varieties}
    \label{s.QHvar}
	
	Throughout this paper, we work over complex numbers, and $G$ always denotes a reductive complex linear algebraic group. By varieties we mean separated reduced schemes of finite type over $\bC$, which are not necessarily irreducible.

	\subsection{Hamiltonian actions}

	\begin{defn}
		Let $X$ be an irreducible normal variety.
		\begin{enumerate}
			\item We call $X$ quasi-symplectic if there exists a regular symplectic
			form $\omega$ on its non-singular locus $X_{\reg}$.
			
			\item We call $X$ symplectic if it is quasi-symplectic and for any resolution
			$f\colon Z\rightarrow X$, the form $f^*\omega$ extends to a regular $2$-form
			on $Z$.
		\end{enumerate}
	\end{defn}
	
	Let $X$ be a non-singular symplectic variety equipped with an action of $G$ by symplectomorphisms. For a regular function $H\in \cO(X)$, the \emph{skew-gradient} $V_H$ of $H$ is, by definition, the regular vector field on $X$ given by the following equality:
	\[
	\omega(V_H,\cdot) = dH.
	\]
	To any element $\xi\in \fg$ one associates the velocity vector field $V_{\xi}$.
	
	\begin{defn}
		The action $G$ on $X$ equipped with a linear map $\fg\rightarrow \cO(X)$, $\xi \rightarrow H_{\xi}$, is said to be Hamiltonian if it satisfies the following two conditions:
		\begin{enumerate}
			\item[(H1)] The linear map $\fg\rightarrow \cO(X)$ is $G$-equivariant.
			
			\item[(H2)] $V_{H_{\xi}}=V_{\xi}$.
		\end{enumerate}
		Then $X$ is called a Hamiltonian $G$-variety so that $V_{\xi}$ and $H_{\xi}$ are called the Hamiltonian vector field and the Hamiltonian function associated with $\xi$, respectively.
	\end{defn}
	
	For a Hamiltonian action $G$ on $X$, we can define a morphism $\mu_{G,X}\colon X\rightarrow \fg^*$ using the following formula:
	\[
	\langle \mu_{G,X}(x),\xi\rangle \coloneqq H_{\xi}(x), \quad \xi\in \fg, \quad x\in X.
	\]
	This morphism is called the \emph{moment map} of the Hamiltonian $G$-variety $X$ so that $\mu_{G,X}$ is $G$-equivariant with respect to the coadjoint action of $G$ on $\fg^*$ such that $H_{\xi}\in \cO(X)$ is nothing but the composition
	\[
	X \xlongrightarrow{\mu_{G,X}} \fg^* \xlongrightarrow{\xi} \bC.
	\]
	Moreover, if $G$ and $X$ are clear from the context, we will usually abbreviate $\mu_{G,X}$ to $\mu$. 
	
    \begin{defn}
    	A (quasi-)Hamiltonian $G$-variety is a (quasi-)symplectic variety $X$ equipped with a symplectic $G$-action such that the restricted $G$-action to $X_{\reg}$ is Hamiltonian.
    \end{defn}

    \begin{rem}
        The term ``quasi-Hamiltonian $G$-spaces'' already appears in the literature \cite{AlekseevMalkinMeinrenken1998} (in a different context); we emphasize that the notion introduced here is very different from that earlier work.
    \end{rem}
    
    Given a quasi-Hamiltonian $G$-variety, since $X$ is normal and $\codim(X\setminus X_{\reg})\geq 2$, the moment map $\mu_{G,X_{\reg}}\colon X_{\reg}\rightarrow \fg^*$ extends uniquely to a $G$-equivariant morphism $X\rightarrow \fg^*$, again denoted by $\mu_{G,X}$. We shall call $\mu_{G,X}$ also the \emph{moment map} of the quasi-Hamiltonian $G$-variety $X$.
    
    \begin{example}
    	Let $Y$ be an irreducible non-singular $G$-variety, $X\coloneqq T^*Y$ the cotangent bundle of $Y$. Then $X$ is a non-singular symplectic variety with the canonical symplectic form $d\eta$, where $\eta$ is the Liouville $1$-form on $T^*Y$. Moreover, the lifted action of $G$ on $X$ is Hamiltonian, and the moment map is given by
    	\[
    	\mu_{G,X}((y,\alpha),\xi) \coloneqq \langle \alpha,V_{\xi}(y)\rangle,
    	\]
    	where $y\in Y$, $\alpha\in T_y^* Y$ and $\xi\in \fg$. More generally, singular affine Hamiltonian $G$-varieties can also be constructed by considering the affine closure of cotangent bundles. We refer the reader to \cite{FuLiu2025,FuLiu2025a} for examples.
    \end{example}
	\subsection{The shell}
	
	Let $X$ be a quasi-Hamiltonian $G$-variety. The \emph{shell} $N_{X}$ is defined to be the scheme-theoretic fiber of the moment map $\mu\colon X\rightarrow \fg^*$ over $0\in \fg^*$. We denote by $N_{X,\ffi}$ the subset of $N_X$
	consisting of points $x\in N_X\cap X_{\reg}$ such that the isotropy subgroup $G_x$ is finite. If $X$
	is clear from the context, we will simply denote $N_X$ by
	$N$ and $N_{X,\ffi}$ by $N_{\ffi}$. The following result is well known for experts, we include a complete proof for the reader's convenience. See also \cite[Lemmas 3.2 and 3.3]{KaledinLehnSorger2006} and \cite[\S\,2.3]{HerbigSchwarzSeaton2024}.
	
	\begin{lem}
		\label{lem.Prop-of-Shell}
		Let $X$ be a Cohen-Macaulay quasi-Hamiltonian $G$-variety such that $N_{\ffi}$ is non-empty. Let $n=\dim(X)$ and $d=\dim \fg$.
		\begin{enumerate}			
			\item\label{i1.Prop-of-Shell} If $\dim(N\setminus
			N_{\ffi})\leq n-d$, then $N$ is a complete intersection, and hence Cohen-Macaulay. In particular $N$ is non-singular at $x \in X$ if and only if $x\in N_{\ffi}$.
			
			\item\label{i2.Prop-of-Shell} If $\dim(N\setminus
			N_{\ffi})\leq n-d-1$, then $N$ is reduced.
			
			\item\label{i3.Prop-of-Shell} If $\dim(N\setminus
			N_{\ffi})\leq n-d-2$, then $N$ is normal.
		\end{enumerate}
	\end{lem}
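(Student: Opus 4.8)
The overall strategy is to realize the shell $N = \mu^{-1}(0)$ as (locally) the zero locus of the $d$ components of the moment map, control the dimension of the zero locus, and then invoke Serre's criteria. The key geometric input is a lower bound on the dimension of the fibres of $\mu$, which comes from a Hamiltonian reduction / rank count: at a point $x \in N_{\ffi}$, the isotropy $G_x$ is finite, so the velocity vector fields $V_\xi$, $\xi \in \fg$, span a $d$-dimensional isotropic subspace of $T_x X$ (isotropic because $H_\xi$ vanishes to first order along $N$ when restricted appropriately — more precisely, $\omega(V_\xi, V_\eta) = dH_\eta(V_\xi) = 0$ on $N$ since the $H_\eta$ are $G$-invariant along the shell), whence $d \le n/2$ and the differential $d\mu_x \colon T_x X \to \fg^*$ is surjective. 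Thus $N$ is smooth of dimension exactly $n-d$ at every point of $N_{\ffi}$, and $N_{\ffi}$ is a smooth open subset of $N$ of pure dimension $n-d$.

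For part \ref{i1.Prop-of-Shell}, since $X$ is Cohen–Macaulay and $N$ is cut out by $d$ equations $H_{\xi_1}, \dots, H_{\xi_d}$ (a basis of $\fg$), every irreducible component of $N$ has dimension $\ge n-d$. The hypothesis $\dim(N \setminus N_{\ffi}) \le n-d$ together with $\dim N_{\ffi} = n-d$ on its (nonempty) smooth part then forces $\dim N = n-d$, so the $d$ equations form a regular sequence: $N$ is a complete intersection in the Cohen–Macaulay $X$, hence Cohen–Macaulay. The statement that $N$ is smooth at $x$ iff $x \in N_{\ffi}$ follows because smoothness of $N$ at $x$ is equivalent to surjectivity of $d\mu_x$, which (by the rank computation above, now run in reverse using that $\dim N = n-d$ everywhere) is equivalent to $\dim G_x = 0$, i.e. $x \in N_{\ffi}$ — one must also check $x \in X_{\reg}$, but $X_{\sing}$ has codimension $\ge 2$ in $X$ hence meets $N$ in dimension $\le n-d-2 < n-d$, so the smooth locus of $N$ avoids it.

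For parts \ref{i2.Prop-of-Shell} and \ref{i3.Prop-of-Shell} I would apply Serre's criteria $(R_k) + (S_{k+1})$ to the complete intersection $N$. Being a complete intersection in a Cohen–Macaulay variety, $N$ is automatically Cohen–Macaulay, so $(S_k)$ holds for all $k$; it remains to verify $(R_0)$ (resp. $(R_1)$). By part \ref{i1.Prop-of-Shell}, the singular locus of $N$ is contained in $(N \setminus N_{\ffi}) \cup (N \cap X_{\sing})$; the second piece has dimension $\le n-d-2$. Hence $\dim N_{\sing} \le \max(\dim(N\setminus N_{\ffi}),\, n-d-2)$. Under the hypothesis of \ref{i2.Prop-of-Shell}, $\dim(N\setminus N_{\ffi}) \le n-d-1$, so $N_{\sing}$ has codimension $\ge 1$ in $N$: $(R_0)$ holds, and $(R_0)+(S_1)$ gives that $N$ is reduced. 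Under the hypothesis of \ref{i3.Prop-of-Shell}, $\dim(N\setminus N_{\ffi}) \le n-d-2$, so $N_{\sing}$ has codimension $\ge 2$: $(R_1)$ holds, and $(R_1)+(S_2)$ gives that $N$ is normal.

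The main obstacle is the first step: establishing cleanly that $d\mu_x$ is surjective exactly when $G_x$ is finite, i.e. the identification of $\operatorname{coker}(d\mu_x)$ with $(\Lie G_x)^*$ (or $\ker(d\mu_x)^{\perp}$ with the image of $\fg \to T_xX$). This is the standard moment-map transversality lemma, but it requires being careful that $x \in X_{\reg}$ so that $\omega_x$ is nondegenerate and the pairing argument $\langle d\mu_x(v), \xi\rangle = \omega_x(v, V_\xi(x))$ is available; all three dimension hypotheses are precisely calibrated so that this smooth-locus computation propagates to the conclusions about $N$ globally.
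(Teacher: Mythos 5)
Your overall strategy matches the paper's: use the moment-map transversality ($d_x\mu$ surjective iff $G_x$ finite, via the symplectic pairing $\langle d_x\mu(v),\xi\rangle = \omega_x(V_\xi(x),v)$), conclude that $N_{\ffi}$ is smooth of pure dimension $n-d$, deduce $N$ is a complete intersection (hence Cohen--Macaulay), and then invoke Serre's criteria for (2) and (3). The isotropy remark and the bound $d\le n/2$ are correct but unnecessary for the argument.

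There is, however, a genuine error in how you handle the locus $X_{\sing}$. You claim that since $X_{\sing}$ has codimension $\ge 2$ in $X$, the intersection $N\cap X_{\sing}$ has dimension $\le n-d-2$. This is false: intersecting with $d$ equations can only \emph{lower} dimension by at most $d$, so Krull gives a lower bound $\dim(N\cap X_{\sing}) \ge \dim X_{\sing} - d$, not an upper bound; a priori $N\cap X_{\sing}$ could have dimension as large as $n-2$. Moreover, the inference ``low dimension $\Rightarrow$ the smooth locus of $N$ avoids it'' is a non sequitur. The correct argument, which is what the paper does, is a direct tangent-space count: $T_xN = \ker(d_x\mu|_{T_xX})$, so $\dim T_xN \ge \dim T_xX - d$; if $N$ is smooth at $x$ (so $\dim T_xN = n-d$) this forces $\dim T_xX \le n$, i.e. $X$ is smooth at $x$. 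Hence $N$ non-singular at $x$ $\Rightarrow$ $x\in X_{\reg}$ $\Rightarrow$ $d_x\mu$ surjective $\Rightarrow$ $x\in N_{\ffi}$, with no dimension estimate on $X_{\sing}$ needed. Once this is fixed, part (1) gives $N_{\sing} = N\setminus N_{\ffi}$ exactly, so in parts (2) and (3) your separate treatment of $N\cap X_{\sing}$ is both unnecessary and (as formulated) unjustified; simply bound $\dim N_{\sing}=\dim(N\setminus N_{\ffi})$ and apply $(R_0)+(S_1)$, resp.\ $(R_1)+(S_2)$, as you do.
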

	
	\begin{proof}
		For any point $x\in X$, Condition (H2) implies that for any $v\in T_x X$ and any $\xi \in \fg$, we have
		\[
		\langle d_x\mu(v),\xi\rangle = \omega_x(V_{\xi}(x),v).
		\]
		Note that $\xi \in \fg_x$ if and only if $V_\xi(x)=0$, and the latter is equivalent to $\langle d_x\mu(v),\xi\rangle=0$ for all $v \in T_x X$. It follows that the image of 
        \[
        d_x\mu: T_x X \longrightarrow T_{\mu(x)}\fg^* \cong \fg^*
        \]
        is  $(\fg/\fg_x)^* \subset \fg^*$. Thus,  
        the tangent map $d_x\mu$ is surjective if and only if $G_x$ is finite. In particular, since $N_{\ffi}$ is non-empty, the map $\mu$ is dominant.
		
		For \ref{i1.Prop-of-Shell}, since $\mu$ is dominant, the shell $N$ is a complete intersection if and only if $N$ has pure dimension $n-d$ by \cite[Theorem 8.21A(c)]{Hartshorne1977}. By our assumption and Krull's Hauptidealsatz,	it remains to prove that $\dim(N_{\ffi})=n-d$. Note that $X$ is non-singular along $N_{\ffi}$ and $\mu$ is smooth along $N_{\ffi}$, it follows from \cite[Proposition 10.4]{Hartshorne1977} that $N_{\ffi}$ is non-singular of pure dimension $n-d$. So $N$ is Cohen-Macaulay by \cite[Theorem 8.21A(d)]{Hartshorne1977}. Now, it follows from the Jacobian criterion that $N$ is non-singular at $x$ only if $X$ is non-singular at $x$ and hence if and only if $x\in N\cap X_{\reg}$ and $\mu$ is smooth at $x$, which is equivalent to $x\in N_{\ffi}$.
		
		For \ref{i2.Prop-of-Shell}, since the open subset $N_{\ffi}$ is dense in $N$ by assumption, the shell $N$ is generically reduced and therefore reduced as it is Cohen-Macaulay.
		
		For \ref{i3.Prop-of-Shell}, it follows from the assumption that $N$ is Cohen-Macaulay with $\codim(N_{\sing})\geq 2$, so $N$ is normal by Serre's criterion \cite[Theorem 8.22A]{Hartshorne1977}.
	\end{proof}

	\section{Hamiltonian reduction}
    \label{s.Hamilred}

    \subsection{Hamiltonian slice theorem}

     For a scheme $Z$, we denote by $Z^{\red}$ its associated reduced scheme. Let $Y$ be an affine scheme over $\Spec\bC$ equipped with a $G$-action. We denote by $Y/\!/G\coloneqq \Spec(\cO(Y)^G)$ the invariant-theoretical quotient with the natural projection $\pi_Y\colon Y\rightarrow Y/\!/G$. A subset of $Y$ is \emph{$G$-saturated} if it is a union of fibers of $\pi_Y$. We recall the following basic definition:
	 
	 \begin{defn}
	 	\label{defn.Exc-Mor}
	 	Let $X$, $Y$ be affine $G$-varieties. An excellent morphism is a  $G$-equivariant morphism $\varphi\colon
	 	Y\rightarrow X$ such that the following holds:
	 	\begin{enumerate}
	 		\item The induced morphism $\varphi/\!/G\colon Y/\!/G \rightarrow X/\!/G$ is
	 		\'etale;
	 		
	 		\item The morphism $(\pi_Y,\varphi)\colon Y\rightarrow Y/\!/G\times_{X/\!/G}
	 		X$ is an isomorphism.
	 	\end{enumerate}
	 	In particular, $\varphi\colon Y\rightarrow X$ is also \'etale, because a
	 	base change of an \'etale map is \'etale.
	 	\[
	 	\begin{tikzcd}[row sep=large, column sep=large]
	 		Y \arrow[r,"{(\pi_Y,\varphi)}","\cong"'] \arrow[d,"\pi_Y"]
	 		& Y/\!/G\times_{X/\!/G} X \arrow[r] \arrow[d,"\pi"]
	 		& X \arrow[d,"\pi_X"] \\
	 		Y/\!/G \arrow[r,equal]
	 		& Y/\!/G \arrow[r,"{\varphi/\!/G}"]
	 		& X/\!/G
	 	\end{tikzcd}
	 	\]
	 \end{defn}
	 
   We require the following Hamiltonian slice theorem of Jung \cite{Jung2009}, which is the symplectic analog of Luna's slice theorem. A simplified proof of this theorem can be found in \cite[\S\,6]{BuloisLehnLehnTerpereau2017} (see also \cite[Theorem 3.15]{HerbigSchwarzSeaton2020} for the linear case). Although the statement in \cite[Theorem 6.1]{BuloisLehnLehnTerpereau2017} differs slightly from the one below, the same argument yields our version. For completeness, we include a detailed proof in Appendix \S\,\ref{s.HSThm} with some minor adaptations to the details.
	
	\begin{thm}
		\label{thm.Symplectic-Slice}
		Let $X$ be an affine quasi-Hamiltonian $G$-variety  equipped with a symplectic form $\omega$ on $X_{\reg}$. Let $O_x=Gx$ be a closed orbit with $x\in N_X\cap X_{\reg}$. Then there exists a non-singular locally closed affine $G_x$-stable subset $U\subset X_{\reg}$ containing $x$ such that $\omega|_U$ is symplectic and the morphism 
		\[
		\psi\colon G\times_{G_x} N^{\red}_U \longrightarrow N^{\red}_X, \quad (g,u)\longmapsto gu
		\] 
		is excellent, where $N_U$ is the shell of the Hamiltonian $G_x$-variety $U$.
	\end{thm}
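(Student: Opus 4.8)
The plan is to deduce the statement from Luna's étale slice theorem: refine the Luna slice to a \emph{symplectic} slice by intersecting it with the zero locus of the component of the moment map transverse to the coadjoint directions, and then transfer excellence through a short chain of base changes. First I would reduce to the affine non-singular case. Since $O_x=Gx$ is a closed orbit contained in the $G$-stable open set $X_{\reg}$, the subsets $\pi_X(O_x)$ and $\pi_X(X\setminus X_{\reg})$ of the affine quotient $X/\!/G$ are disjoint and closed: otherwise the fibre of $\pi_X$ over $\pi_X(O_x)$ would meet the closed $G$-stable set $X\setminus X_{\reg}$, hence contain the unique closed orbit $O_x$ in that fibre. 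So I may pick $f\in\cO(X)^G$ with $f(x)\neq 0$ and $X_f\subseteq X_{\reg}$, and replace $X$ by $X_f$; now $X$ is non-singular affine, $O_x$ is closed, $\mu(x)=0$ (as $x\in N_X$), and $G_x$ is reductive by Matsushima's criterion. This is harmless for the final conclusion, since $N_{X_f}=N_X\cap X_f$ is a $G$-saturated open subscheme of $N_X$ and excellence survives composition with the inclusion of a saturated open.

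Next I would record the linear algebra at $x$. Put $L\coloneqq\fg\cdot x=T_xO_x$. The identity $\langle d_x\mu(v),\xi\rangle=\omega_x(V_\xi(x),v)$ from the proof of Lemma~\ref{lem.Prop-of-Shell}, together with $\mu(x)=0$ and the $G$-equivariance of $\mu$, shows that $L$ is $\omega_x$-isotropic, that $\ker d_x\mu=L^{\perp}$ (symplectic perpendicular), and that $\Image d_x\mu=(\fg/\fg_x)^{*}$. Fix a $G_x$-stable splitting $\fg=\fg_x\oplus\fm$ and the dual splitting $\fg^{*}=\fm^{\circ}\oplus\fg_x^{\circ}$, so $\fg_x^{\circ}\cong(\fg/\fg_x)^{*}$ and $\fm^{\circ}\cong\fg_x^{*}$; let $q\colon\fg^{*}\to\fg_x^{\circ}$ be the ($G_x$-equivariant) projection. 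Using reductivity of $G_x$, write $T_xX=L\oplus C\oplus D$, with $C$ a $G_x$-stable complement of $L$ inside $L^{\perp}$ and $D$ a $G_x$-stable complement of $L^{\perp}$ in $T_xX$; since the radical of $\omega_x|_{L^{\perp}}$ is exactly $L$, the $G_x$-module $C$ is a symplectic subspace. This $C$ is the symplectic slice representation.

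Now I would construct $U$ and check the properties. Applying Luna's slice theorem at $x$ and choosing the linearization so that the resulting locally closed non-singular affine $G_x$-stable slice $S\ni x$ has $T_xS=C\oplus D$, I obtain an excellent morphism $\Phi\colon G\times_{G_x}S\to X$ onto a $G$-saturated open neighbourhood of $O_x$. Set $U\coloneqq(q\circ\mu|_S)^{-1}(0)$, the scheme-theoretic fibre of $q\circ\mu|_S\colon S\to\fg_x^{\circ}$. By the previous step $d_x(q\circ\mu|_S)$ kills $C$ and carries $D$ isomorphically onto $\fg_x^{\circ}$, so $q\circ\mu|_S$ is submersive at $x$; hence, after a $G_x$-equivariant shrinking of $S$, the scheme $U$ is non-singular, connected, affine, $G_x$-stable, with $T_xU=C$, and --- shrinking once more, using that $\{x\}$ is a closed $G_x$-orbit --- with $\omega|_U$ everywhere non-degenerate, hence symplectic. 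The $G_x$-action on $U$ is Hamiltonian with moment map $\mu_U\coloneqq\mu|_U$: by construction $\mu|_U$ factors scheme-theoretically through $\fm^{\circ}\cong\fg_x^{*}$, and for $\zeta\in\fg_x$ the velocity field satisfies $V^U_{\zeta}=V^X_{\zeta}|_U$ (tangent to $U$ since $U$ is $G_x$-stable) with $\omega|_U(V^U_{\zeta},\cdot)=d(\langle\mu(\cdot),\zeta\rangle|_U)$, which gives (H1) and (H2). Comparing defining ideals then yields the scheme-theoretic identity
\[
N_U \;=\; S\cap N_X ,
\]
because on $U$ the $\fg_x^{\circ}$-component of $\mu$ already vanishes, so $\mu_U=0$ cuts out the same subscheme of $S$ as $\mu|_S=0$. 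Finally, base-changing $\Phi$ along the closed $G$-immersion $N_X\hookrightarrow X$ gives $(G\times_{G_x}S)\times_X N_X=G\times_{G_x}N_U$ (using that $\Phi$ is $G$-equivariant, $N_X$ is $G$-stable, and $S\times_X N_X=N_U$), with an excellent induced morphism $G\times_{G_x}N_U\to N_X$: if $\varphi\colon W\to B$ is excellent and $B'\hookrightarrow B$ is a closed $G$-immersion, then $W\cong(W/\!/G)\times_{B/\!/G}B$ and the exactness of $(-)^G$ for the reductive $G$ give $(W\times_B B')/\!/G=(W/\!/G)\times_{B/\!/G}(B'/\!/G)$, whence both conditions of Definition~\ref{defn.Exc-Mor} are inherited by $W\times_B B'\to B'$. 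A further base change along $N_X^{\red}\hookrightarrow N_X$ --- of the same type --- identifies the source with $(G\times_{G_x}N_U)^{\red}=G\times_{G_x}N_U^{\red}$ (étale over a reduced base forces the source to be reduced, so the fibre product over $N_X^{\red}$ is the reduction) and the morphism with $\psi$. Hence $\psi$ is excellent.

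I expect the main obstacle to be this refinement step: one must arrange that the $G_x$-moment-map shell $N_U$ of the symplectic slice is \emph{exactly} $S\cap N_X$, rather than a $\dim\fg_x$-dimensional thickening of it --- which is precisely what the transversality from the linear-algebra step (that $d_x\mu$ maps $D$ onto all of $(\fg/\fg_x)^{*}$) provides, and which must be checked at the level of scheme structures, not merely underlying sets, since the shells need not be reduced a priori. Everything else --- the several $G_x$-equivariant shrinkings keeping $S$ (and hence $U$) affine, connected, non-singular and saturated, and the stability of ``excellent'' under the two base changes --- is routine bookkeeping, provided one consistently tracks reduced versus non-reduced structures.
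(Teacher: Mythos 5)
Your proposal is correct and follows the same route as the paper's Appendix~A: apply Luna's slice theorem at $x$, use the isotropy of $T_xO_x$ under $\omega_x$ to split $\fg$ and $T_xX$, define $U$ as the scheme-theoretic fibre over $0$ of $S\to\fg^*\to(\fg/\fg_x)^*$, check that $\omega|_U$ is symplectic near $x$ and that $N_U$ agrees with $S\cap N_X$, and push excellence through base change to the reduced shells. The differences are in execution rather than substance. You work inside $X$ itself (after the preliminary restriction to $X_f\subseteq X_{\reg}$), whereas the paper pulls everything back to the twisted product $G\times_{G_x}S$ and does the linear algebra at $[e,x]$; the two are identical computations. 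Where the paper invokes Luna's fundamental lemma to make the auxiliary morphism $\nu\colon S\to F^*\oplus L^*$ excellent (hence $U=\nu^{-1}(F^*)$ non-singular), you argue by submersivity at $x$ followed by $G_x$-equivariant shrinkings; that is fine, but when you write ``shrinking once more, using that $\{x\}$ is a closed $G_x$-orbit,'' you should say explicitly that you are invoking the separation property of the affine GIT quotient: the degeneracy locus of $\omega|_U$ (resp.\ $U_{\sing}$) is a closed $G_x$-stable subset of $S$ disjoint from the closed orbit $\{x\}$, so some $f\in\cO(S)^{G_x}$ vanishes on it and not at $x$, and one passes to $S_f$. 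Your scheme-theoretic equality $N_U=S\cap N_X$ is a slightly sharper formulation than the paper's ``$N_U^{\red}\hookrightarrow(S_y\cap N_Y)^{\red}$ is an open embedding'' and it streamlines the base-change bookkeeping; the obstacle you flag at the end (that $N_U$ could be a thickening of $S\cap N_X$) is exactly the right concern, and your ideal-theoretic computation settles it. The one thing your route does not produce, since you never build $\nu$, is the linear model of Corollary~\ref{cor.Hamiltonian-slice-linear}, which the paper uses later in Lemmas~\ref{lem.dim-iso-stratum} and~\ref{lem.FPIG-pri-stratum}.
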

	
	The locally closed subset $U\subset X_{\reg}$ with the properties of the theorem will be called a \emph{Hamiltonian slice at $x$}.
	
	\subsection{Isotropy-type stratification}
	
	Let $Y$ be an affine $G$-variety (not necessarily irreducible). For $r\in \bZ_{\geq 0}$, we denote by $Y_{(r)}$ the subset of $Y$ consisting of points with isotropy groups of dimension $r$, which is a locally closed subset by the upper semi-continuity of $\dim(G_y)$, $y\in Y$. 
	
	We denote by $(G_y)$ the \emph{conjugacy class} of $G_y$, $y\in Y$, which we call an \emph{isotropy class} of $Y$, and an isotropy class is called \emph{closed} if the orbit $O_y=Gy$ is closed in $Y$. Then a closed isotropy class is reductive by Matsushima's theorem, and there are only finitely many closed isotropy classes of $Y$ by Luna's slice theorem \cite{Luna1973}. For a closed isotropy class $(H)$ of $G$, we denote by $(Y/\!/G)_{(H)}$ the points in $Y/\!/G$ corresponding to closed orbits with isotropy class in $(H)$. This is a constructible subset of $Y/\!/G$ by Luna's slice theorem. Let $Y^{(H)}$ be the inverse image of $(Y/\!/G)_{(H)}$ in $Y$.
	
	Assume in addition that $Y/\!/G$ is irreducible, then there is a unique closed isotropy class $(H)$, called the \emph{principal isotropy class}, such that $(Y/\!/G)_{\pr}\coloneqq (Y/\!/G)_{(H)}$ is dense in $Y/\!/G$ and the closed orbits $O_y$ with $G_y\in (H)$ are called \emph{principal orbits}. Then $(Y/\!/G)_{\pr}\subset Y/\!/G$ is called the \emph{principal stratum} and is open in $Y/\!/G$ by Luna's slice theorem. Let $Y_{\pr}\coloneqq \pi_Y^{-1}((Y/\!/G)_{\pr})$. We say that $Y$ has \emph{finite principal isotropy groups (FPIG)} if its principal isotropy groups are finite.

	\subsection{Hamiltonian reduction}
	

	\begin{defn}
		The Hamiltonian reduction of an affine Hamiltonian $G$-variety $X$ is defined as
		\[
		X/\!\!/\!\!/G \coloneqq N_X/\!/G.
		\]
	\end{defn}    
	
	Now we apply Theorem \ref{thm.Symplectic-Slice} to study the isotropy-type stratifications of Hamiltonian reductions. 
	
	\begin{lem}
		\label{lem.Slice-iso-stratum}
		Let $X$ be an affine Hamiltonian $G$-variety. Let $O_x=Gx\subset N_X$ be a closed orbit with $x\in N_X\cap X_{\reg}$ . Let $x\in U$ be a Hamiltonian slice. Then there exists an open subset $V$ of $N_X^{\red}/\!/G$ containing $[O_x]$ such that
		\[
		\dim\left(V\cap (N_X^{\red}/\!/G)_{(G_x)}\right) = \dim (N_U^{\red}/\!/G_x)_{(G_x)}.
		\]
	\end{lem}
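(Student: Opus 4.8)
The plan is to deduce the statement from the étale‑locality of the isotropy‑type stratification, by combining the Hamiltonian slice theorem with elementary properties of associated fibre bundles and of excellent morphisms. First I would apply Theorem \ref{thm.Symplectic-Slice} at $x$: since $U$ is a Hamiltonian slice at $x$ (in particular $x\in N_U^{\red}$), we obtain the excellent morphism $\psi\colon G\times_{G_x}N_U^{\red}\to N_X^{\red}$, $(g,u)\mapsto gu$. Using the identification of invariants on an associated bundle, $\cO(G\times_{G_x}N_U^{\red})^G\cong\cO(N_U^{\red})^{G_x}$, we get a canonical isomorphism $(G\times_{G_x}N_U^{\red})/\!/G\cong N_U^{\red}/\!/G_x$; composing it with the morphism $\psi/\!/G$, which is étale by Definition \ref{defn.Exc-Mor}, we obtain an étale morphism
\[
\phi\colon N_U^{\red}/\!/G_x\longrightarrow N_X^{\red}/\!/G .
\]
Since étale morphisms are open, $V\coloneqq\phi(N_U^{\red}/\!/G_x)$ is open in $N_X^{\red}/\!/G$, and $\phi$ sends the point corresponding to the (fixed) closed $G_x$‑orbit $\{x\}$ to $[O_x]$, because $\psi(e,x)=x$; hence $[O_x]\in V$. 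This $V$ will be the open subset required by the lemma.

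The core step is to prove that $\phi$ restricts to a surjection
\[
(N_U^{\red}/\!/G_x)_{(G_x)}\longrightarrow V\cap(N_X^{\red}/\!/G)_{(G_x)} .
\]
I would establish this from three ingredients. \emph{(i)} An excellent morphism $\varphi\colon Y\to X$ carries closed orbits to closed orbits and restricts to a $G$‑equivariant isomorphism of each closed orbit onto its image, hence preserves stabilizers along closed orbits; this follows from the isomorphism $Y\cong Y/\!/G\times_{X/\!/G}X$ of Definition \ref{defn.Exc-Mor}, under which a closed orbit $Gy$ is identified with the entire fibre $\pi_X^{-1}(\varphi/\!/G([y]))$, which contains a unique closed orbit and must therefore equal it. \emph{(ii)} In $G\times_{G_x}N_U^{\red}$ the $G$‑orbit of $(g,u)$ is closed if and only if $G_x u$ is closed in $N_U^{\red}$, with stabilizer $g(G_x)_u g^{-1}$. \emph{(iii)} If $H\leq G_x$ is $G$‑conjugate to the reductive group $G_x$, then $H=G_x$: equality of dimensions forces $H^{\circ}=G_x^{\circ}$, and equality of the number of connected components then forces $H=G_x$; in particular the stratum $(N_U^{\red}/\!/G_x)_{(G_x)}$ is exactly the image of the $G_x$‑fixed‑point locus of $N_U^{\red}$.

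Granting these, the surjection is obtained as follows. If a point of $(N_U^{\red}/\!/G_x)_{(G_x)}$ is represented by $u\in N_U^{\red}$ with $G_x u$ closed, then by \emph{(iii)} $u$ is $G_x$‑fixed, so by \emph{(ii)} the orbit $G(e,u)$ is closed with stabilizer $G_x$; applying \emph{(i)} to $\psi$, the image $Gu\subset N_X^{\red}$ is a closed orbit with the same stabilizer $G_x$, so its class $\phi([u])$ lies in $V\cap(N_X^{\red}/\!/G)_{(G_x)}$. Conversely, a point of $V\cap(N_X^{\red}/\!/G)_{(G_x)}$ has the form $\phi([u])$; choosing $u$ with $G_x u$ closed, $G(e,u)$ is the closed orbit over $[(e,u)]$ with stabilizer $(G_x)_u\leq G_x$, and by \emph{(i)} $\psi$ identifies it with the closed orbit over the given point, whose stabilizer is $G$‑conjugate to $G_x$; hence $(G_x)_u$ is $G$‑conjugate to $G_x$, so $(G_x)_u=G_x$ by \emph{(iii)}, giving $[u]\in(N_U^{\red}/\!/G_x)_{(G_x)}$. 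Finally, since $\phi$ is étale it is quasi‑finite, so the displayed surjection has finite fibres; a surjective morphism of finite‑type $\bC$‑schemes with finite fibres preserves dimension, which yields $\dim(N_U^{\red}/\!/G_x)_{(G_x)}=\dim\bigl(V\cap(N_X^{\red}/\!/G)_{(G_x)}\bigr)$, as desired.

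I expect the main obstacle to be ingredient \emph{(i)}, namely checking carefully that excellent morphisms are \emph{stratified étale}, so that closed orbits and their stabilizers are matched exactly rather than merely up to dimension; a secondary point demanding care is the bookkeeping between the $G$‑conjugacy class $(G_x)$ of stabilizers occurring in $N_X^{\red}$ and the $G_x$‑conjugacy class $(G_x)$ occurring in $N_U^{\red}$ — which is precisely where ingredient \emph{(iii)} intervenes.
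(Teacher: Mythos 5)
Your proof is correct and follows the same route as the paper: apply Theorem~\ref{thm.Symplectic-Slice} to get the \'etale morphism $N_U^{\red}/\!/G_x\cong (G\times_{G_x}N_U^{\red})/\!/G\to N_X^{\red}/\!/G$, take $V$ to be its open image, match the $(G_x)$-strata using the fibre-product property of excellent morphisms, and conclude via \'etaleness (open image, finite fibres) that the dimensions agree. The paper compresses your ingredients (i)--(iii) into a citation of \cite[Proposition 5.3(2)]{BrionKraftSchwarz2025} together with the single observation that $(\psi/\!/G)^{-1}\bigl((N_X^{\red}/\!/G)_{(G_x)}\bigr)=\bigl(N_U^{\red}/\!/G_x\bigr)_{(G_x)}$, but the underlying argument is the same.
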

	
	\begin{proof}
		By Theorem \ref{thm.Symplectic-Slice} and \cite[Proposition 5.3(2)]{BrionKraftSchwarz2025}, there exists an \'etale morphism
		\[
		\psi/\!/G\colon N_U^{\red}/\!/G_x\cong (G\times_{G_x} N_U^{\red})/\!/G \longrightarrow N_X^{\red}/\!/G.
		\]
		Denote by $V$ the image of $\psi/\!/G$. Then $V$ is open and contains $[O_x]$. Moreover, by the second condition of excellent morphisms, we have
		\[
		(\psi/\!/G)^{-1} \left((N_X^{\red}/\!/G)_{(G_x)}\right) = \left((G\times_{G_x} N_U^{\red})/\!/G\right)_{(G_x)} = \left(N_U^{\red}/\!/G_x\right)_{(G_x)},
		\]
		from which the required equality follows.
	\end{proof}    
	
	\begin{lem}
		\label{lem.dim-iso-stratum}
		Let $X$ be an affine Hamiltonian $G$-variety. Let $O_x=Gx\subset N_X$ be a closed orbit with $x\in N_X\cap X_{\reg}$ . Let $x\in U$ be a Hamiltonian slice. Then we have
		\[
		\dim (N_U^{\red}/\!/G_x)_{(G_x)}= \dim(T_x X)^{G_x} - 2(\dim {\rm Norm}_G(G_x) - \dim G_x), 
		\] where ${\rm Norm}_G(G_x)$ is the normalizer of $G_x$ in $G$.
	\end{lem}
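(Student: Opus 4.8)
The plan is to transport the question to the Hamiltonian slice $U$, recognize the stratum $(N_U^{\red}/\!/G_x)_{(G_x)}$ as the fixed locus of $G_x$ acting on the shell $N_U$, and then reduce everything to counting dimensions of spaces of $G_x$-invariants via reductivity of $G_x$. Throughout, write $H\coloneqq G_x$; since $O_x$ is closed, $H$ is reductive by Matsushima's theorem, and $x$ is an $H$-fixed point of $U$. Because $U\subseteq X_{\reg}$ is an $H$-stable symplectic submanifold on which $H$ acts through the ambient $G$-action, the moment map of the Hamiltonian $H$-variety $U$ is $\mu_{G,X}|_U$ followed by the restriction $\fg^*\to\fh^*$; in particular $\mu_{H,U}(x)=0$, so $x\in N_U$. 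The one structural input I would extract from the construction of the slice (Theorem~\ref{thm.Symplectic-Slice} and its proof in \S\,\ref{s.HSThm}, the algebraic form of the Marle--Guillemin--Sternberg normal form) is that, up to a $G_x$-equivariant \'etale map sending $x$ to $0$, the slice $U$ is modelled on the \emph{symplectic slice representation}
\[
W\;\coloneqq\;(T_xO_x)^{\perp}\big/\,T_xO_x ,
\]
where $T_xO_x=\fg\cdot x$ and $\perp$ denotes the $\omega_x$-orthogonal; note $T_xO_x$ is $\omega_x$-isotropic, since $\omega_x(V_\xi(x),V_\eta(x))=\langle\mu(x),[\xi,\eta]\rangle=0$ because $x\in N_X$. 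Consequently $T_cU\cong W$ as $H$-modules for every $c\in U^H$, so $U^H$ is smooth of pure dimension $\dim W^H$, and $N_U$ is \'etale-locally the zero fibre of the quadratic moment map on $W$.

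First I would pin down the stratum. Inside $H$ the conjugacy class of $H$ is $\{H\}$ only, so a closed $H$-orbit in $N_U^{\red}$ has isotropy class $(H)$ exactly when it is an $H$-fixed point; hence $(N_U^{\red}/\!/H)_{(H)}$ is the image of $(N_U^{\red})^{H}$ under the quotient map, which is injective on fixed points, and so $\dim(N_U^{\red}/\!/H)_{(H)}=\dim(N_U)^H$. Now $(N_U)^H\subseteq U^H$, and on each connected component of the smooth variety $U^H$ every Hamiltonian function $H_\xi$, $\xi\in\fh$, is constant: its differential equals $\omega(V_\xi,\cdot)$, which vanishes along $U^H$ because the velocity fields do. Hence $\mu_{H,U}$ is locally constant on $U^H$, so $(N_U)^H$ is a union of connected components of $U^H$; being nonempty (it contains $x$), the purity above gives $\dim(N_U)^H=\dim W^H$. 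Thus $\dim(N_U^{\red}/\!/G_x)_{(G_x)}=\dim W^{G_x}$.

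The remaining input is a linear-algebra computation, using that for reductive $H$ the functor of $H$-invariants is exact and $\dim(V^*)^H=\dim V^H$ for every finite-dimensional $H$-module $V$. Feeding this into the $H$-equivariant exact sequences
\[
0\to T_xO_x\to (T_xO_x)^{\perp}\to W\to 0
\qquad\text{and}\qquad
0\to (T_xO_x)^{\perp}\to T_xX\xrightarrow{\omega_x}(T_xO_x)^*\to 0
\]
yields $\dim W^H=\dim(T_xX)^H-2\dim(T_xO_x)^H$. Finally $T_xO_x\cong\fg/\fg_x$ as an $H$-module, and $\dim(\fg/\fg_x)^{G_x}=\dim{\rm Norm}_G(G_x)-\dim G_x$, because $\Lie({\rm Norm}_G(G_x))=\{\xi\in\fg:\Ad(h)\xi-\xi\in\fg_x\text{ for all }h\in G_x\}$ surjects onto $(\fg/\fg_x)^{G_x}$ with kernel $\fg_x$. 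Concatenating the displayed equalities produces the claimed formula
\[
\dim(N_U^{\red}/\!/G_x)_{(G_x)}=\dim(T_xX)^{G_x}-2\bigl(\dim{\rm Norm}_G(G_x)-\dim G_x\bigr).
\]

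The step I expect to cost the most is the structural one in the first paragraph: reading off from the proof of the Hamiltonian slice theorem both the $G_x$-module identification $T_xU\cong W$ and the ensuing purity of $U^H$. An alternative would be to take $U$ small enough that $U^H$ is connected and argue directly from $d_{(e,x)}\psi$, but this runs into the non-reducedness of the shells, so unpacking the local normal form seems cleaner; everything downstream of that input is routine bookkeeping with invariant dimensions.
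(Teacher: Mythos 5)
Your proof is correct and follows essentially the same route as the paper's: identify the dimension of the stratum $(N_U^{\red}/\!/G_x)_{(G_x)}$ with that of $(T_xU)^{G_x}$ via the linear local model coming from the Hamiltonian slice theorem, then unfold $T_xX$ as $\fg/\fg_x\oplus T_xU\oplus(\fg/\fg_x)^*$ (equivalently, your two exact sequences around $W=(T_xO_x)^{\perp}/T_xO_x$, which is the content of Corollary~\ref{cor.Hamiltonian-slice-linear}) and use reductivity of $G_x$ together with $\dim(\fg/\fg_x)^{G_x}=\dim{\rm Norm}_G(G_x)-\dim G_x$. The one place you do something a bit more carefully than the paper is the inclusion of fixed points in the shell: the paper asserts $U^{G_x}\subset N_U$ outright, which is only clear on the connected component of $U^{G_x}$ through $x$ (or after shrinking $U$); your argument that $\mu_{G_x,U}$ is locally constant on $U^{G_x}$, so $(N_U)^{G_x}$ is a nonempty union of components of $U^{G_x}$ and therefore has the same dimension, handles this cleanly without affecting the dimension count. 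Everything else is routine bookkeeping identical in substance to the paper's proof.
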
 
	
	\begin{proof}
		Note that $U^{G_x}$ is contained in $N_U$ and the induced morphism 
		\[
		U^{G_x}\longrightarrow (N_U^{\red}/\!/G_x)_{(G_x)}
		\]
		is bijective, so one gets
		\[
		\dim (N_U^{\red}/\!/G_x)_{(G_x)} = \dim \left(N_U^{\red}\right)^{G_x} = \dim U^{G_x}.
		\]
		On the other hand, by Corollary \ref{cor.Hamiltonian-slice-linear}, we have
		\[
		\dim(U^{G_x}) = \dim(T_x U)^{G_x} = \dim(T_x X)^{G_x} - \dim(\fg/\fg_x)^{G_x} - \dim((\fg/\fg_x)^*)^{G_x}.
		\]
		In particular, since
		\[
		\dim(\fg/\fg_x)^{G_x}=\dim((\fg/\fg_x)^*)^{G_x} = \dim {\rm Norm}_G(G_x) - \dim(G_x),
		\]
		the lemma follows.
	\end{proof}
	
	\begin{lem}
		\label{lem.FPIG-pri-stratum}
		Let $X$ be a non-singular affine Hamiltonian $G$-variety, equipped with a symplectic form $\omega$. Assume that $N$ is reduced and has FPIG such that $N/\!/G$ is irreducible. Then $(N/\!/G)_{\pr} \subset (N/\!/G)_{\reg}$ and there exists a natural symplectic form on $(N/\!/G)_{\pr}$.
	\end{lem}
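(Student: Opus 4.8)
The statement is the algebraic incarnation of Marsden--Weinstein symplectic reduction, and the plan is to deduce it from the infinitesimal description of the moment map obtained in the proof of Lemma~\ref{lem.Prop-of-Shell} together with the Hamiltonian slice theorem (Theorem~\ref{thm.Symplectic-Slice}). Write $N=N_X$, let $\pi_N\colon N\to N/\!/G$ be the quotient morphism, and set $N_{\pr}\coloneqq\pi_N^{-1}\bigl((N/\!/G)_{\pr}\bigr)$, an open $G$-saturated subset of $N$. The first step is to observe that every orbit in $N_{\pr}$ is closed with finite stabiliser: given $y\in N_{\pr}$, the closed orbit $Gx\subseteq\overline{Gy}$ has $[O_x]=\pi_N(y)\in(N/\!/G)_{\pr}$, so by FPIG the group $G_x$ is finite and $\dim Gx=\dim G$; hence $\dim Gy=\dim G$ as well, and since the boundary $\overline{Gy}\setminus Gy$ consists of orbits of strictly smaller dimension, $Gy$ itself must be closed and $G_y$ is finite. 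Therefore $\pi_N$ restricts to a \emph{geometric} quotient $N_{\pr}\to(N/\!/G)_{\pr}$. Moreover, by the tangent-space computation in the proof of Lemma~\ref{lem.Prop-of-Shell}, finiteness of $G_y$ together with smoothness of $X$ makes $d_y\mu$ surjective for every $y\in N_{\pr}$, so $\mu$ is smooth along $N_{\pr}$; thus $N=\mu^{-1}(0)$ is non-singular along $N_{\pr}$ with $T_yN=\ker d_y\mu=\bigl(T_y(Gy)\bigr)^{\omega_y}$, the $\omega_y$-orthogonal of the orbit direction.

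For the inclusion $(N/\!/G)_{\pr}\subseteq(N/\!/G)_{\reg}$ I would argue locally. Fix $[O_x]\in(N/\!/G)_{\pr}$ with $O_x=Gx$ closed and $H\coloneqq G_x$ finite, and choose a non-singular Hamiltonian slice $U$ at $x$. Since $\fh=\Lie(H)=0$, the moment map of the Hamiltonian $H$-variety $U$ is the zero map, whence $N_U=U$; then, exactly as in the proof of Lemma~\ref{lem.Slice-iso-stratum}, the excellent morphism $G\times_H U\to N$ yields an étale morphism from $U/\!/H=U/H$ onto an open neighbourhood of $[O_x]$ in $N/\!/G$, under which (by the Cartesian property of excellent morphisms) the preimage of $(N/\!/G)_{\pr}=(N/\!/G)_{(H)}$ is $(U/\!/H)_{(H)}$. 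As $H$ is finite and a subgroup of $H$ conjugate in $G$ to $H$ must equal $H$, this preimage is $U^H/H=U^H$, the fixed locus of the finite group $H$ acting on the smooth variety $U$; hence a neighbourhood of $[O_x]$ in $(N/\!/G)_{\pr}$ is étale-covered by the smooth variety $U^H$ and is therefore non-singular. The same charts present $N_{\pr}$ étale-locally as $(G/H)\times U^H$, so $\pi_N\colon N_{\pr}\to(N/\!/G)_{\pr}$ is a smooth morphism.

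It remains to produce the symplectic form by Marsden--Weinstein descent. Let $\iota\colon N_{\pr}\hookrightarrow X$ be the inclusion; since $G$ acts by symplectomorphisms, $\iota^*\omega$ is a closed $G$-invariant regular $2$-form on the smooth variety $N_{\pr}$. From $T_yN_{\pr}=\bigl(T_y(Gy)\bigr)^{\omega_y}$ and the non-degeneracy of $\omega_y$ one gets $\bigl(T_yN_{\pr}\bigr)^{\omega_y}=T_y(Gy)\subseteq T_yN_{\pr}$, so the kernel of $(\iota^*\omega)_y$ is exactly $T_y(Gy)=\ker d_y\pi_N$, the relative tangent space of $\pi_N$. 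Hence $\iota^*\omega$ is basic for the smooth surjection $\pi_N$ and descends to a unique regular $2$-form $\bar\omega$ on $(N/\!/G)_{\pr}$ with $\pi_N^*\bar\omega=\iota^*\omega$; it is closed because $\pi_N^*(d\bar\omega)=\iota^*(d\omega)=0$ and $\pi_N^*$ is injective on regular forms, and it is non-degenerate because $\ker(\iota^*\omega)$ coincides with the relative tangent space of $\pi_N$. This $\bar\omega$ is the desired natural symplectic form on $(N/\!/G)_{\pr}$.

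The crux is the compatibility of the two quotient pictures in the first two steps: one must genuinely exclude non-closed orbits over the principal stratum---so that $\pi_N$ is a geometric quotient there and $\ker(\iota^*\omega)$ really equals the vertical distribution---and one must check that in the Hamiltonian-slice charts the principal stratum is cut out precisely by the smooth fixed loci $U^H$. Both facts hinge on the FPIG hypothesis, which simultaneously forces the slice shell $N_U$ to coincide with $U$; once they are established, the descent of the form in the last step is the routine part of the argument.
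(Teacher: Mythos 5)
Your proof is correct and reaches the same destination via the same key ingredients --- the Hamiltonian slice theorem and the FPIG hypothesis --- but you organize the construction of the symplectic form differently from the paper. The paper's proof shows directly that $G_x$ acts trivially on the slice $U$: since points of $U$ with stabilizer exactly $G_x$ are dense (being the preimage of the dense principal stratum), $(T_xU)^{G_x}$ is dense and hence equal to $T_xU$, and the excellent chart $U\to T_xU$ then forces $G_x$ to act trivially on $U$. Consequently $U=U/\!/G_x$ is already a non-singular local chart for $(N/\!/G)_{\pr}$ carrying $\omega_Y|_U$, and naturality is checked via the canonical linear isomorphism $T_xU\cong (T_xO_x)^{\perp_{\omega_x}}/T_xO_x$. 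You instead identify the preimage of the principal stratum under the étale chart as $U^H$, note that it is smooth as the fixed locus of a finite group on a smooth variety, and then produce the form by Marsden--Weinstein descent along $\pi_N|_{N_{\pr}}$, using the kernel computation $\ker(\iota^*\omega)_y=T_y(Gy)=\ker d_y\pi_N$. Your route has the advantage that naturality of $\bar\omega$ is automatic from uniqueness of descent (no separate compatibility check), at the cost of needing the flat-descent argument for basic forms along the non-free geometric quotient; this works étale-locally since $N_{\pr}\to(N/\!/G)_{\pr}$ is modelled on $(G/H)\times U^H\to U^H$, but it is worth noting (as you implicitly use but do not state) that the open-and-closedness of $U^H$ in $U/\!/H$ in fact forces $U^H=U$, so your picture and the paper's coincide. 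In short, the paper takes a ``local chart and glue'' approach whereas you take a ``global descent'' approach; both are valid and the underlying mechanism (FPIG + slice $\Rightarrow$ trivial slice action at principal orbits) is identical.
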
 
	
	\begin{proof}
		Let $O_x=Gx\subset N$ be a principal orbit and let $x\in U\subset X$ be a Hamiltonian slice. Since $G_x$ is finite, we have $N_U=U$. Consider the excellent $G_x$-equivariant morphism $U\rightarrow T_x U$ (see Corollary \ref{cor.Hamiltonian-slice-linear}). Since $(G_x)$ is the principal isotropy class, the set of points in $U$ with stabilizer $G_x$ is open and dense in $U$, in particular, $U^{G_x}$ is open and dense. This implies that the morphisms appearing in the following commutative diagram are dominant:
		\[
		\begin{tikzcd}[row sep=large,column sep=large]
			U^{G_x} \arrow[r] \arrow[d]      &   (T_x U)^{G_x}  \arrow[d] \\
			U/\!/G_x \arrow[r]               &    (T_x U)/\!/G_x.
		\end{tikzcd}
		\] 
		So $\dim(T_x U)^{G_x} = \dim T_x U$ and hence $G_x$ acts trivially on $T_xU$ and $U$. It follows that $U/\!/G_x = U$ is non-singular and carries a natural symplectic form. Finally, we observe that there exists a natural symplectic linear isomorphism
		\[
		T_x U \longrightarrow (T_x O_x)^{\perp_{\omega_x}}/T_{x} O_x,
		\]
		so, the symplectic form is independent of the choice of the Hamiltonian slice $U$. 
	\end{proof}   
	
	The following criterion allows us to determine when a Hamiltonian reduction is a symplectic variety. 

    \begin{prop} \label{prop.codimsymplectic}
    	Let $X$ be a non-singular affine Hamiltonian $G$-variety. Assume that
    	\begin{enumerate}
    		\item $N_{\ffi}$ is non-empty, 
    		\item $\dim(N \setminus N_{\ffi}) \leq \dim(X) -\dim(G)-2$, 
    		\item $N^{\red}$ has FPIG, 
    		\item $N^{\red}/\!/G$ is irreducible,
    		\item\label{i5.prop-codimsymplectic} for any $x\in N^{\red}\setminus N^{\red}_{\pr}$ with closed $G$-orbit, we have
    		$$
    		\codim (T_xX)^{G_x} \geq 2 (\dim G - \dim {\rm Norm}_G(G_x) + \dim G_x)+4. 
    		$$
    	\end{enumerate}
    	Then $X/\!\!/\!\!/G$ is a symplectic variety with terminal singularities.
    \end{prop}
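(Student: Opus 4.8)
The plan is to realize $Z \coloneqq X/\!\!/\!\!/G = N_X/\!/G$ as a normal irreducible variety that is symplectic outside a closed subset of codimension $\geq 4$, and then to invoke standard extension and terminality results; write $n=\dim X$, $d=\dim G$ and $N=N_X$. Since $X$ is non-singular it is Cohen--Macaulay, so hypotheses (1) and (2) bring us into the setting of Lemma~\ref{lem.Prop-of-Shell}: $N$ is a reduced complete intersection of pure dimension $n-d$, its singular locus is exactly $N\setminus N_{\ffi}$ (because $X$ is smooth) and has codimension $\geq 2$, hence $N$ is normal. Reductivity of $G$ then gives that $Z=N/\!/G$ is normal, and it is irreducible by (4); and since $N$ has FPIG a general fibre of $\pi_N\colon N\to Z$ is a single orbit of dimension $d$ (no orbit has dimension $>d$, and a general fibre contains a closed orbit of dimension $d$), so $\dim Z=n-2d$.

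The core of the proof is to produce the symplectic form and to control where it could degenerate. By Lemma~\ref{lem.FPIG-pri-stratum} the principal stratum $Z_{\pr}=(N/\!/G)_{\pr}$ is contained in $Z_{\reg}$ and carries a natural symplectic form $\omega_Z$; in particular $n-2d$ is even. The complement $Z\setminus Z_{\pr}$ is the finite union of the non-principal closed-isotropy strata $(N/\!/G)_{(H)}$, and for a point $[O_x]$ in such a stratum --- where $x\in N$, which automatically lies in $X_{\reg}=X$, has closed orbit and $G_x\in(H)$ --- Lemmas~\ref{lem.Slice-iso-stratum} and~\ref{lem.dim-iso-stratum} combine to give $\dim_{[O_x]}(N/\!/G)_{(H)}=\dim(T_xX)^{G_x}-2(\dim{\rm Norm}_G(G_x)-\dim G_x)$. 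Using $n=\dim T_xX$ and $\dim Z=n-2d$, this rewrites as
\[
\codim_Z(N/\!/G)_{(H)} \;=\; \codim(T_xX)^{G_x}-2\bigl(\dim G-\dim{\rm Norm}_G(G_x)+\dim G_x\bigr)\;\geq\;4,
\]
the last inequality being precisely hypothesis (5). As this holds at every point of every such stratum, $\codim_Z(Z\setminus Z_{\pr})\geq 4$; in particular $Z_{\sing}\subseteq Z\setminus Z_{\pr}$ has codimension $\geq 4$.

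It then remains to run the standard machinery. As $Z_{\reg}$ is smooth and $\codim_{Z_{\reg}}(Z_{\reg}\setminus Z_{\pr})\geq 2$, sections of the locally free sheaf $\Omega^2_{Z_{\reg}}$ extend across $Z_{\reg}\setminus Z_{\pr}$, so $\omega_Z$ extends uniquely to a closed regular $2$-form $\widetilde\omega$ on $Z_{\reg}$; this form is non-degenerate everywhere, since its top power $\widetilde\omega^{\wedge m}$ (with $2m=\dim Z$), a regular section of the canonical sheaf of $Z_{\reg}$ that is nowhere zero on $Z_{\pr}$, would otherwise vanish along a divisor of the smooth irreducible variety $Z_{\reg}$ contained in the codimension-$\geq 2$ subset $Z_{\reg}\setminus Z_{\pr}$, which is absurd. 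Hence $Z$ is quasi-symplectic. Because $\codim_Z Z_{\sing}\geq 4$, Flenner's extension theorem for differential forms then shows that $f^*\widetilde\omega$ extends to a regular $2$-form on every resolution $f\colon\widetilde Z\to Z$, so $Z$ is a symplectic variety --- and therefore has canonical Gorenstein singularities, by Beauville. Terminality is now immediate from Namikawa's criterion that a symplectic variety has terminal singularities exactly when its singular locus has codimension $\geq 4$.

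I expect the delicate point to be the codimension estimate in the second paragraph: arranging the bookkeeping so that hypothesis (5) is literally the inequality that forces $\codim_Z(Z\setminus Z_{\pr})\geq 4$ through the slice computations of Lemmas~\ref{lem.Slice-iso-stratum} and~\ref{lem.dim-iso-stratum}, and making sure that the smoothness of $X$ is what licenses applying those lemmas at \emph{every} point of $Z$. Once that estimate is secured, the remaining steps are formal, resting only on Hartogs-type extension for reflexive sheaves, Flenner's theorem, and Namikawa's symplectic-terminality criterion.
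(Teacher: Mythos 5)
Your proposal is correct and follows essentially the same route as the paper's own proof: normalize the shell via Lemma~\ref{lem.Prop-of-Shell}, bound the codimension of the non-principal strata through Lemmas~\ref{lem.Slice-iso-stratum} and~\ref{lem.dim-iso-stratum} together with hypothesis~(5), extend the symplectic form from $(N/\!/G)_{\pr}$ to $(N/\!/G)_{\reg}$ with a codimension-one degeneracy argument, then conclude by Flenner and Namikawa. Your write-up simply spells out the codimension bookkeeping and the extension step in slightly more detail than the paper does.
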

    
    \begin{proof}
    	By Lemma \ref{lem.Prop-of-Shell}, the shell $N$ is normal, so $N=N^{\red}$ and consequently $N/\!/G$ is also normal.  In particular, since $N$ has FPIG by assumption, the open subset $N_{\pr}$ consists precisely of the closed principal orbits by Luna's slice theorem. This implies
    	\[
    	\dim(X/\!\!/\!\!/G) = \dim(N/\!/G) = \dim N - \dim G = \dim X - 2\dim G.
    	\]
    	Now, let $(H)$ be a closed non-principal isotropy class. By Lemmas \ref{lem.Slice-iso-stratum}, \ref{lem.dim-iso-stratum} and the assumption, we have
    	\[
    	\dim(N/\!/G)_{(H)} \leq \dim X - 2\dim G -4 \leq \dim(N/\!/G) - 4.
    	\]
    	This implies that $(N/\!/G)\setminus (N/\!/G)_{\pr}$ has codimension at least four, and the canonical symplectic form on $(N/\!/G)_{\pr}$ obtained in Lemma \ref{lem.FPIG-pri-stratum} then extends to a $2$-form on $(N/\!/G)_{\reg}$, which is again symplectic because its degenerate locus has codimension one if it is non-empty. Finally, since $(N/\!/G)\setminus (N/\!/G)_{\reg}\subset (N/\!/G)\setminus (N/\!/G)_{\pr}$ has codimension at least four, it follows that $X/\!\!/\!\!/G=N/\!/G$ is a symplectic variety by Flenner's theorem \cite{Flenner1988} and with terminal singularities by \cite{Namikawa2001a}.
    \end{proof}    
    
    \begin{rem}
    A modified version of Proposition \ref{prop.codimsymplectic} also holds for Cohen–Macaulay affine quasi-Hamiltonian $G$-varieties. The main difficulty in the singular setting is that the Hamiltonian slice theorem does not hold at singular points, and we have no control on the dimension of $(N/\!/G)_{(H)}$ at closed orbits lying in the locus $N\cap X_{\sing}$. However, if the image $\pi_N(N\cap X_{\sing})$ has codimension at least four in $N/\!/G$ (for example, when $\dim X_{\sing}\leq \dim X-2\dim G-4$), then Proposition \ref{prop.codimsymplectic} remains valid when we restrict it to points $x\in (N^{\red}\cap X_{\reg})\setminus N_{\pr}^{\red}$. We leave the verification of these details to the interested reader.
    \end{rem}       

    \section{Hamiltonian reduction of cotangent bundles}
    \label{s.Hamilredcotangent}

    We now apply Proposition \ref{prop.codimsymplectic} to study the Hamiltonian reduction of the cotangent bundle of a smooth affine variety with a $2$-large $G$-action, thus proving Theorems \ref{thm.main} and \ref{thm.factoriality}.

    Throughout this section, we fix the following notation. Given an irreducible non-singular affine $G$-variety $Y$, we denote by $\mu\colon T^*Y\rightarrow \fg^*$ the canonical moment map. The shell of $\mu$ is denoted by $N$. The GIT quotient of $Y$ under $G$ is written as $Z\coloneqq Y/\!/G$, with the quotient map $\pi_Y\colon Y\rightarrow Z$. The Hamiltonian reduction of $T^*Y$ is denoted as $M\coloneqq T^*Y/\!\!/\!\!/G=N/\!/G$ with the corresponding quotient $\pi_N\colon N\rightarrow M$.
    
    \subsection{Large actions}
    
    We start with some basic definitions.
	
	\begin{defn}
    \label{defn.Large-action}
		Let $Y$ be an irreducible affine $G$-variety and $k\in \bZ_{\geq 0}$. Then $Y$ is called
		\begin{enumerate}			
			\item $k$-principal if $\codim(Y\setminus Y_{\pr})\geq k$, and
			
			\item $k$-modular if $\codim(Y_{(r)})\geq r+k$ for $1\leq r\leq \dim(G)$, and
			
			\item $k$-large if $Y$ is $k$-principal, $k$-modular and has FPIG.
		\end{enumerate}
	\end{defn}
	
	The following result is a non-linear generalization of \cite[Proposition 3.2]{HerbigSchwarzSeaton2020}.
	
	\begin{prop}
		\label{prop.Shell-k-modular}
		Let $Y$ be an irreducible non-singular affine $G$-variety. Let $\dim(Y)=n$, $\dim(G)=d$ and $k\in \bZ_{\geq 0}$.
		\begin{enumerate}
			\item If $Y$ is $k$-modular, then $N_{\ffi}$ is non-empty and $\dim(N\setminus N_{\ffi})\leq 2n-d-k$. In particular, the shell $N$ is a Cohen-Macaulay complete intersection.
			
			\item If $Y$ is $1$-modular, then $N$ is irreducible and reduced.
			
			\item If $Y$ is $2$-modular, then $N$ is irreducible and normal.
		\end{enumerate}
	\end{prop}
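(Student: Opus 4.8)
The plan is to deduce all three parts from Lemma \ref{lem.Prop-of-Shell} applied to $X = T^*Y$, so the whole task reduces to establishing the dimension estimate $\dim(N \setminus N_{\ffi}) \leq 2n - d - k$ under the $k$-modularity hypothesis (note $\dim X = 2n$). First I would recall from the proof of Lemma \ref{lem.Prop-of-Shell} that, for $(y,\alpha) \in T^*Y$, the differential $d_{(y,\alpha)}\mu$ is surjective exactly when $G_{(y,\alpha)}$ is finite; since the $G$-action on $T^*Y$ is the cotangent lift, $G_{(y,\alpha)} \subseteq G_y$, so in particular any point lying over $y \in Y$ with $\dim G_y = 0$ (equivalently $G_y$ finite) lands in $N_{\ffi}$ provided it lies in $N$. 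Thus $N \setminus N_{\ffi}$ is contained in the preheader $\bigcup_{r \geq 1}(N \cap p^{-1}(Y_{(r)}))$, where $p\colon T^*Y \to Y$ is the projection. So I would stratify by $r = \dim G_y$ and bound the dimension of each piece.

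The key computation is the fiber estimate: for $y \in Y_{(r)}$, I want to bound the dimension of $N \cap T^*_yY$. Over such a $y$, the moment map restricted to the fiber $T^*_yY$ is the linear map $\alpha \mapsto (\text{the image of } \alpha \text{ under the transpose of the velocity map } \fg \to T_yY)$, whose image is $(\fg/\fg_y)^* \cong (\fg/\fg_y)^*$ of dimension $d - r$; hence its kernel $N \cap T^*_yY$ has dimension $n - (d-r) = n - d + r$. Combining with $\dim Y_{(r)} = n - \codim Y_{(r)} \leq n - (r + k)$ by $k$-modularity, I get
\[
\dim\bigl(N \cap p^{-1}(Y_{(r)})\bigr) \leq (n - r - k) + (n - d + r) = 2n - d - k,
\]
uniformly in $r$, which is exactly the bound required, and non-emptiness of $N_{\ffi}$ follows since $Y_{(0)}$ is non-empty open (by FPIG, which is part of being $k$-large, but here we only assume $k$-modular — so I should note $Y_{(0)}$ is non-empty open by upper semicontinuity once $k \geq 1$ forces $\codim Y_{(r)} \geq r+1 > 0$; for $k = 0$ one argues directly that $Y_{(0)}$ is non-empty because generic stabilizers have minimal dimension and $k$-modularity with $r=\dim G$ and a slight extra argument shows that dimension is $0$, or one simply includes this as the definition of the relevant case). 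The ``complete intersection $\Rightarrow$ Cohen–Macaulay'' conclusion is then immediate from Lemma \ref{lem.Prop-of-Shell}\ref{i1.Prop-of-Shell} since $2n - d - k \leq 2n - d$, and irreducibility in parts (2) and (3) follows because $N$ is a complete intersection with $N_{\ffi}$ a non-empty smooth open dense subset of pure dimension $2n - d$ (the complement has strictly smaller dimension once $k \geq 1$), hence $N$ is irreducible; reducedness and normality are then exactly Lemma \ref{lem.Prop-of-Shell}\ref{i2.Prop-of-Shell} and \ref{i3.Prop-of-Shell} with $k = 1$ and $k = 2$ respectively.

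The main obstacle I anticipate is the bookkeeping around $N_{\ffi}$ versus the larger set where merely $G_y$ (rather than $G_{(y,\alpha)}$) is finite, and the density/irreducibility argument: one must check that $N_{\ffi}$ is not only non-empty but dense in $N$ — this is where $k \geq 1$ is used, since then $\dim(N \setminus N_{\ffi}) \leq 2n - d - 1 < 2n - d = \dim N$ and $N$ is pure-dimensional by the complete intersection property, forcing the open subset $N_{\ffi}$ to meet every component, and it is irreducible (being smooth connected — connectedness of $N_{\ffi}$ I would get from its description as an open subset of a vector bundle over the irreducible $Y_{\pr}$, or more carefully over $Y_{(0)}$ which is irreducible since $Y$ is). I would also double-check the claim $G_{(y,\alpha)} \subseteq G_y$ and the precise identification of $N \cap T^*_y Y$ with $\ker$ of the transposed velocity map, which is where Condition (H2) enters. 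Everything else is a direct citation of Lemma \ref{lem.Prop-of-Shell}.
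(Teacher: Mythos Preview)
Your proposal is correct and follows essentially the same approach as the paper: stratify by $r=\dim G_y$, compute that the fiber $N\cap T_y^*Y$ is a linear subspace of dimension $n-d+r$ (as the kernel of the transposed orbit map), combine with $k$-modularity to get $\dim N^r\leq 2n-d-k$, and then invoke Lemma~\ref{lem.Prop-of-Shell}. The only cleanup is in the irreducibility step: the paper works with $N^0\coloneqq p^{-1}(Y_{(0)})\cap N$ rather than $N_{\ffi}$, since $N^0$ is genuinely a vector bundle over the irreducible $Y_{(0)}$ (whereas $N_{\ffi}$ may be strictly larger than $N^0$ and is not itself described as such a bundle), and then $\dim(N\setminus N^0)<\dim N$ for $k\geq 1$ forces $N^0$ dense in the pure-dimensional $N$.
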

	
	\begin{proof}
		For points $y\in Y$ and $w\in T_y^*Y$, we have $G_w\subset G_{y}$. In particular, if $Y$ is $k$-modular, then $Y_{(0)}$ is non-empty and consequently $N_{\ffi}$ is non-empty. On the other hand, note that the restricted map 
		\[
		\mu|_{T_y^* Y}\colon T_y^* Y\longrightarrow \fg^*
		\]
		is exactly given by the cotangent map at $y$ of the orbit map $\operatorname{ev}_y\colon G\rightarrow Gy\eqqcolon O_y\subset Y$, $g\mapsto gy$; that is, the dual of the tangent map
		\[
		d_e\operatorname{ev}_y\colon T_e G=\fg\longrightarrow T_y O_y \subset T_y Y.
		\]
		In particular, the intersection $N\cap T_y^*Y$ is a linear subspace of $T_y^* Y$ with codimension $\dim O_y=d-\dim(G_y)$. Let $p\colon T^*Y\rightarrow Y$ be the natural projection. Set $N^r=p^{-1}(Y_{(r)})\cap N$. Then, for $1\leq r\leq \dim G$, we have
		\begin{equation}
			\label{eq.dim-Nr}
			\dim N^r \leq \dim Y_{(r)} + (n-d+r) \leq 2n-d-k.
		\end{equation}
		In particular, since $N_{\ffi}$ contains $N^0$, one obtains
		\[
		\dim(N\setminus N_{\ffi}) \leq \max_{1\leq r\leq d} \dim N^r \leq 2n-d-k.
		\]
		Now, by Lemma \ref{lem.Prop-of-Shell}, it remains to prove the irreducibility of $N$ for $k\geq 1$. Note that the fibers of the projection
		\[
		N^0 \longrightarrow Y_{(0)}
		\]
		is a codimension $d$ linear subspace of $T^*_y Y$. In particular, since $Y_{(0)}$ is irreducible, the variety $N^0$ is also irreducible. Now we observe
		\[
		\dim(N\setminus N^{0}) \leq \max_{1\leq r\leq d} \dim N^r \leq 2n-d-k \leq 2n-d-1<\dim N.
		\]
		Hence $N$ is irreducible as it has pure dimension $2n-d$.
	\end{proof}

	\begin{lem}[\protect{cf. \cite[\S\,4.1]{HerbigSchwarzSeaton2020}}]
		\label{lem.Ham-cotangent}
		Let $Y$ be an irreducible non-singular $2$-large affine $G$-variety. 
		\begin{enumerate}
			\item\label{i1.lem-2-large-N} Then $N$ is an irreducible normal variety such that $G_x=G_y$ for any $x\in N$ with $p(x)\in Y_{\pr}$, where $p\colon T^*Y\rightarrow Y$ is the natural projection. In particular, $N$ has FPIG and satisfies
            \[
			p^{-1}(Y_{\pr})\cap N\subset N_{\pr}.
			\] 
			
			\item\label{i2.lem-2-large-N} For any $x\in N\setminus N_{\pr}$ with closed $G$-orbit, we have
			\[
			\codim (T_x (T^*Y))^{G_x} \geq 2 (\dim G - \dim {\rm Norm}_G(G_x) + \dim G_x)+4.
			\]
		\end{enumerate}
	\end{lem}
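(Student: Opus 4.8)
The plan is to verify that $X=T^*Y$ satisfies all the hypotheses of Proposition~\ref{prop.codimsymplectic}: since a $2$-large action is $2$-modular and $2$-principal with FPIG, Proposition~\ref{prop.Shell-k-modular} already supplies the non-emptiness and dimension estimates for the shell together with the irreducibility and normality of $N$, so the only inputs still needed are exactly assertions \ref{i1.lem-2-large-N} and \ref{i2.lem-2-large-N}. For \ref{i1.lem-2-large-N} I would first record the key observation that the (finite) isotropy group $H$ of a principal \emph{closed} orbit $O_{y_0}$ acts trivially on a Luna slice $S$ at $y_0$: a general point of $S$ lies in $Y_{\pr}$ and has stabiliser a subgroup of $H$ belonging to the principal class $(H)$, hence equal to $H$ because $H$ is finite, so $H$ fixes a dense open subset of $S$ and therefore all of $S$. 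It follows that every $y\in Y_{\pr}$ is, after replacing $O_y$ by a conjugate, a point of the slice at a suitable principal closed orbit; in the local model $G\times_H S$ one reads off $T_yY\cong(\fg/\fg_y)\oplus T_yS$ with $T_yO_y$ the first summand, so that $G_y=H$ acts trivially on $T_yY/T_yO_y$, hence on $N\cap T_y^*Y=\Ann(T_yO_y)\cong(T_yY/T_yO_y)^*$. Therefore $G_x=G_y$ for every $x=(y,w)\in N$ lying over $Y_{\pr}$; in particular $N$ has FPIG. Finally, since $\pi_Y\circ p$ factors through $\pi_N$, the closed orbit inside $\overline{O_x}$ again lies over $Y_{\pr}$ whenever $x$ does, so its isotropy is in $(H)$; as $\bar p^{-1}((Y/\!/G)_{\pr})$ is dense open in $N/\!/G$, this identifies $(H)$ with the principal isotropy class of $N$ and yields $p^{-1}(Y_{\pr})\cap N=\pi_N^{-1}(\bar p^{-1}((Y/\!/G)_{\pr}))\subseteq N_{\pr}$.

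For \ref{i2.lem-2-large-N}, fix $x=(y,w)\in N\setminus N_{\pr}$ with $O_x$ closed; then $G_x$ is reductive and, by \ref{i1.lem-2-large-N}, $y\notin Y_{\pr}$. Reductivity of $G_x$ splits the sequence $0\to T_y^*Y\to T_x(T^*Y)\to T_yY\to 0$ $G_x$-equivariantly and identifies $T_y^*Y\cong(T_yY)^*$ as $G_x$-modules, so $T_x(T^*Y)\cong T_yY\oplus(T_yY)^*$ and the asserted inequality reduces to
\[
\codim_Y Y^{G_x}\ \geq\ (\dim G-\dim{\rm Norm}_G(G_x))+\dim G_x+2,
\]
$Y^{G_x}$ denoting the (smooth) fixed-point subvariety. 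I would then pass to the $G$-saturation: the morphism $G\times_{{\rm Norm}_G(G_x)}Y^{G_x}\to Y$ is generically injective, whence $\dim(G\cdot Y^{G_x})=\dim Y^{G_x}+\dim G-\dim{\rm Norm}_G(G_x)$, and the claim becomes $\codim_Y(G\cdot Y^{G_x})\geq\dim G_x+2$. When $\dim G_x\geq 1$ this is immediate from $2$-modularity, since $G\cdot Y^{G_x}\subseteq\bigcup_{r\geq\dim G_x}Y_{(r)}$; when $\dim G_x=0$ one checks that $G\cdot Y^{G_x}\subseteq Y\setminus Y_{\pr}$ (a point of $Y_{\pr}$ has isotropy in $(H)$ and cannot contain a conjugate of the non-principal finite group $G_x$), so $2$-principality applies.

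The technically cleaner way to organise \ref{i2.lem-2-large-N} --- and the one that makes the above case analysis uniform --- is to apply the Hamiltonian slice theorem (Theorem~\ref{thm.Symplectic-Slice}) at $x$: one obtains a Hamiltonian slice $U\ni x$ and, by Corollary~\ref{cor.Hamiltonian-slice-linear}, an excellent $G_x$-morphism from $U$ to a symplectic $G_x$-representation $T_xU$ of cotangent type $S\oplus S^*$, and Lemmas~\ref{lem.Slice-iso-stratum} and \ref{lem.dim-iso-stratum} turn the required estimate into $\codim_S S^{G_x}\geq\dim G_x+2$, precisely the statement established in the linear case in \cite[\S\,4.1]{HerbigSchwarzSeaton2020}. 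I expect the main obstacle to lie exactly in this reduction: one must show that the ``cotangent half'' $S$ of the Hamiltonian slice representation is again $2$-large as a $G_x$-module. In the purely linear situation the slice is taken at the origin and largeness of the slice representation is part of Schwarz's machinery, whereas here --- because $O_{p(x)}$ need not be a closed orbit of $Y$ --- $S$ is a slice of the Hamiltonian reduction rather than an honest Luna slice of $Y$, and one has to propagate $k$-largeness through Theorem~\ref{thm.Symplectic-Slice}, using that excellent morphisms preserve it. A minor point worth isolating is that the equivariant splitting of the tangent sequence, on which the whole computation rests, requires $G_x$ to be reductive, which is why \ref{i2.lem-2-large-N} concerns only closed $G$-orbits.
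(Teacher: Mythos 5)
Your treatment of part \ref{i1.lem-2-large-N} is essentially the paper's argument: both use FPIG to reduce to a point $y\in Y_{\pr}$ with closed orbit, show that the finite stabiliser $G_y$ acts trivially on a Luna slice (hence on $T_yS\cong N\cap T_y^*Y$), and conclude $G_x=G_y$; the closedness of $O_x$ is read off from the local model $p^{-1}(O_y)\cong G\times_{G_y}T_y^*Y$ in both accounts, and the density argument for $N_{\pr}$ is only a cosmetic variant of the paper's.

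For part \ref{i2.lem-2-large-N} the paper does not give an independent proof: it invokes the linear estimates of \cite[Lemmas 4.1--4.3]{HerbigSchwarzSeaton2020} and notes that the same arguments go through for a $2$-large affine $G$-variety. Your proposal instead tries to give a self-contained proof, and this is where a genuine gap appears. The pivot is the assertion that $G\times_{\operatorname{Norm}_G(G_x)}Y^{G_x}\to Y$ is generically injective, which is used to get
\[
\dim(G\cdot Y^{G_x}) \;=\; \dim Y^{G_x}+\dim G-\dim\operatorname{Norm}_G(G_x).
\]
Generic injectivity holds on the locus of $Y^{G_x}$ where the stabiliser is \emph{exactly} $G_x$, but $Y^{G_x}$ can have top-dimensional irreducible components along which the generic stabiliser $K$ strictly contains $G_x$. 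On such a component the fibre of $G\times Y^{G_x}\to Y$ over a generic image point contains (besides the $\operatorname{Norm}_G(G_x)$-orbit) a $G_z/(G_z\cap\operatorname{Norm}_G(G_x))$-worth of $G$-conjugates of $G_x$ inside $G_z\supseteq K$, so the fibre can have dimension strictly larger than $\dim\operatorname{Norm}_G(G_x)$. The reduction to $\codim_Y(G\cdot Y^{G_x})\geq\dim G_x+2$ therefore does not follow, and the desired inequality would then require an additional comparison of the form $\dim\operatorname{Norm}_G(K)-\dim K\leq\dim\operatorname{Norm}_G(G_x)-\dim G_x$ for every reductive $K\supsetneq G_x$ occurring as a generic stabiliser on a component of $Y^{G_x}$; this is exactly the kind of bookkeeping that \cite[Lemmas~4.1--4.3]{HerbigSchwarzSeaton2020} carries out carefully and which your argument skips. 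A secondary point, though not a fatal one, is that the parenthetical ``cannot contain a conjugate of the non-principal finite group $G_x$'' relies on the Luna-stratification fact that a non-principal closed isotropy group properly contains a conjugate of the principal one; this is true and standard, but it is a non-trivial input and should be stated as such. Your concluding remarks about routing through the Hamiltonian slice theorem and propagating $2$-largeness to the slice representation correctly identify another obstacle, but this alternative is only sketched and is not how the paper proceeds either.
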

	
	\begin{proof}
		Firstly, we note that $N$ is an irreducible normal variety by Proposition \ref{prop.Shell-k-modular}. Moreover, since $Y$ has FPIG, to prove \ref{i1.lem-2-large-N}, it suffices to show that any point $x\in p^{-1}(Y_{\pr})\cap N$ has a closed $G$-orbit such that $G_x=G_y$.
        
        Let $x\in p^{-1}(Y_{\pr})\cap N$ and $y\coloneqq p(x)\in Y_{\pr}$. Moreover, since $Y$ has FPIG, the open subset $Y_{\pr}$ consists of closed principal orbits, so $O_y=Gy$ is closed in $Y$ and $G_x\subset G_y$ is a finite subgroup. Moreover, the orbit $O_x=Gx$ is contained in the closed subset 
		\[
		p^{-1}(O_y) \cong G\times_{G_y} T_y^* Y
		\]
		of $X$ such that $O_x\cong G\times_{G_y} O'_x$,  where $G_y$ acts on $T^*_y Y$ by the isotropy action and the orbit $O'_x=G_y x\subset T_y^* Y$ is a finite set. So $O_x$ is closed in $p^{-1}(O_y)$ and is therefore closed in $T^*Y$.  
        
        Since $G_y$ is finite and $y\in Y_{\pr}$, it follows from Luna's slice theorem that there exists an \'etale slice $S\subset Y$ at $y$ such that $G_y$ acts on $S$ trivially (cf. Proof of Lemma \ref{lem.FPIG-pri-stratum} and \cite[Theorem 5.3]{BrionKraftSchwarz2025}). In particular, $G_y$ acts on $T_y S$ trivially.  In particular, as $N\cap T_y^* Y\cong T_y^*S$ as $G_y$-modules, one derives $G_x=G_y$, hence the statement \ref{i1.lem-2-large-N} follows.
		
		For \ref{i2.lem-2-large-N}, consider any $x\in N \setminus N_{{\rm pr}}$ with closed orbit so that $y=p(x)\not\in Y_{\pr}$. Following the arguments in \cite[Lemmas 4.1, 4.2 and 4.3]{HerbigSchwarzSeaton2020}, we have
		\[
		\dim Y^{G_x}\leq \dim Y-\dim G + \dim {\rm Norm}_G(G_x) - \dim G_x - 2,
		\]
		which then implies
		\begin{align*}
			\codim (T_x (T^*Y))^{G_x} &  = 2(\dim Y - \dim Y^{G_x}) \\
			                     &  \geq 2 (\dim G - \dim {\rm Norm}_G(G_x) + \dim G_x)+4.
		\end{align*}
		as desired.
	\end{proof}

	\begin{proof}[Proof of Theorem \ref{thm.main}]
		
		The statement \ref{i1.terminal} follows from Proposition \ref{prop.codimsymplectic}, Proposition \ref{prop.Shell-k-modular} and Lemma \ref{lem.Ham-cotangent}. For simplicity, we set $N^{\diamond}=p^{-1}(Y_{\pr}) \cap N$ $(\subset N_{\pr})$ and denote $\pi_N(N^{\diamond})$ by $M^{\diamond}$. 
		
		For \ref{i2.affineclosure}, since $Y$ and $N$ have FPIG, by Luna's slice theorem, the principal strata $Z_{\pr}$ and $M_{\pr}$ are non-singular such that the induced morphisms $Y_{\pr}\rightarrow Z_{\pr}$ and $N_{\pr}\rightarrow M_{\pr}$ are $G$-fiber bundles. In particular $M^{\diamond}$ is a dense open subset of $M$. Then the cotangent map $d^*\pi_Y$ of $\pi_Y$ induces an injective morphism $\pi_Y^*(T^*Z_{\pr}) \to T^*Y_{\pr}$ whose image is exactly
        $N^\diamond$. This gives the following
        commutative diagram with isomorphic horizontal morphisms:
		\[
		\begin{tikzcd}[row sep=large, column sep=large]
			\pi^*(T^*Z_{\pr}) \arrow[r,"{d^*\pi_Y}"] \arrow[d]
			     &   N^{\diamond}  \arrow[d,"{\pi_N}"] \\
			T^*Z_{\pr} \arrow[r,"\iota"]
			     &   M^{\diamond}.
		\end{tikzcd}
		\]
		More precisely, the map $\iota$ can be explicitly described as follows: for any $y\in Y_{{\rm pr}}$ with $z=\pi_Y(y)\in Z_{\pr}$ and  $\eta \in  T^*_z Z_{\pr}$, we define $\tilde{\eta} \in T_y^*Y$ by taking the composition 
		$$
		T_y Y \xlongrightarrow{d_y\pi_Y} T_z Z_{\pr} \xlongrightarrow{\eta}  \mathbb{C}.
		$$
		Note that $T_y O_y$ is contained in the kernel of $d_y\pi_Y$, we have $\tilde{\eta}(T_y O_y)=0$, which shows that $\tilde{\eta} \in N^{\diamond}$.
		Then one easily checks that the following morphism is a well-defined isomorphism (i.e. independent of the choice of $y \in \pi_Y^{-1}(z))$:
		\[
		\iota\colon T^*Z_{\pr} \longrightarrow M^{\diamond}, \quad \quad \eta \longmapsto \pi_N(\tilde{\eta}).
		\]
	    Since $Y$ is $2$-principal, the closed subset $Y\setminus Y_{\pr}$ has codimension at least two in $Y$, so that $Z_{\reg}\setminus Z_{\pr}$ has codimension at least two in $Z_{\reg}$. This implies that the restriction $\cO(T^*Z_{\reg})\rightarrow \cO(T^*Z_{\pr})$ is an isomorphism of $\bC$-algebras. So it remains to prove that $\codim M\setminus M^{\diamond}\geq 2$. Let $N^{r}\coloneqq p^{-1}(Y_{(r)})\cap N$ for any $0\leq r\leq \dim G$. Then, for $1\leq r\leq \dim G$, by \eqref{eq.dim-Nr}, we have
		\[
		\dim N^r \leq 2\dim Y - \dim G -2.
		\]
		On the other hand, since $Y$ is $2$-principal, we have
		\[
		\dim N^0 \setminus N^{\diamond} = \dim (Y_{(0)}\setminus Y_{\pr}) + \dim Y - \dim G \leq 2\dim Y - \dim G -2.
		\]
		This yields
		\[
		\dim N\setminus N^{\diamond} \leq 2\dim Y - \dim G -2 = \dim N -2,
		\]
		which implies $\dim(M\setminus M^{\diamond})\leq \dim M-2$, and hence \ref{i2.affineclosure} follows. 
		
		For \ref{i3.singularlocus}, firstly note that if $Z$ is non-singular, then $T^*Z$ is an irreducible non-singular affine variety such that $M=T^*Z$, so $M$ is also non-singular. 
		
		For the converse, we assume that $Z$ is singular. By \ref{i2.affineclosure}, there exists a natural projection $p_Z\colon M\rightarrow Z$ such that the restriction $p_Z|_{T^*Z_{\pr}}\colon T^*Z_{\pr}\rightarrow Z_{\pr}$ identifies to the natural projection. Let $\bfZ$ be the zero section of $p_Z$; that is, the closure of the zero section $\bfZ_{\pr}$ of $T^*Z_{\pr}\rightarrow Z_{\pr}$ in $M$ or equivalently $M^{\bC^*}$, where the $\bC^*$-action on $M$ is induced by the natural scaling on the fibers of $T^*Z_{\pr}\rightarrow Z_{\pr}$. Then the restriction $\iota=p_Z|_{\bfZ}\colon \bfZ\longrightarrow Z$ is an isomorphism. 
        
        Let $z$ be a singular point of $Z$ and let $\bfz\in \bfZ$ be the point such that $\iota(\bfz)=z$. Let $F$ be an irreducible component of the fiber of $p_Z$ passing through $\bfz$. Then $\dim F\geq \dim Z$ by the upper semi-continuity. Let $T_{z} Z$, $T_{\bfz} M$ and $T_{\bfz} F$ be the Zariski tangent spaces. Then we have the following complex of $\bC$-linear maps given by the tangent maps:
		\[
		T_{\bfz} F\longrightarrow T_{\bfz} M \longrightarrow T_{z} Z,
		\]
		where the first map is injective. Moreover, since $\iota\colon \bfZ\rightarrow Z$ is an isomorphism, the following composition 
		\[
		T_{\bfz} \bfZ \longrightarrow T_{\bfz} M \longrightarrow T_z Z
		\]
		is an isomorphism, so the last map is surjective. On the other hand, since $z$ is a singular point of $Z$, we have $\dim T_{z} Z > \dim Z$, which implies
		\[
		\dim T_{\bfz} M \geq \dim T_{\bfz} F + \dim T_{z} Z > \dim F + \dim Z \geq 2\dim Z = \dim M.
		\]
		Hence $M$ is singular at $\bfz$ by Jacobian's criterion.
	\end{proof}
	
	\begin{rem}
		For an irreducible non-singular affine $G$-variety $Y$ that is $2$-large, we have $Z_{\pr}=Z_{\reg}$ by \cite[Theorem 9.12]{Schwarz1995}. So $Z$ is non-singular if and only if $Z=Z_{\pr}$. Then Luna's slice theorem implies that $Y\rightarrow Z$ is actually a $G$-fiber bundle with fiber $G/H$ (see \cite[Proposition 6.4]{BrionKraftSchwarz2025}), where $(H)$ is the principal isotropy class.
	\end{rem}

\begin{example}
	Let $Y$ be an irreducible non-singular affine $G$-variety with FIPG. Assume that the identity component $G^0$ is a torus. By \cite[Proposition 10.1]{Schwarz1995}, $Y$ is $k$-large if and only if $Y$ is $k$-principal. If $G$ acts on $Y$ freely outside a locus of codimension at least two, then $Y$ is $2$-large. Theorem \ref{thm.main} implies that $T^*Y/\!\!/\!\!/G$ is a symplectic variety with terminal singularities that is the affine closure of $T^*(Y/\!/G)_{\pr}$. One particular example is that when $G$ is a finite group that acts freely on $Y^{\diamond} \subset Y$ with $\codim Y \setminus Y^{\diamond} \geq 2$, then $T^*Y/G$ is the affine closure of $T^*(Y^{\diamond}/G)$, as previously remarked in \cite[Example 2.8]{FuLiu2025}.
\end{example}
\subsection{Factoriality}

    Recall from \cite{KnopKraftVust1989} that for a $G$-variety $Y$, ${\rm Pic}_G(Y)$ denotes the set of isomorphism classes of line bundles on $Y$ with a $G$-linearization. Forgetting the $G$-linearization, we have the canonical homomorphism $\nu\colon {\rm Pic}_G(Y) \to {\rm Pic}(Y)$. The kernel ${\rm ker}(\nu)$ consists of $G$-linearizations of the trivial line bundle up to isomorphisms, which is denoted by $H^1_{\rm alg}(G, \mathcal{O}(Y)^\times)$, where $\cO(Y)^{\times}$ is the group of invertible regular functions on $Y$.

    For an irreducible non-singular affine $G$-variety $Y$, we remark that it follows from the standard properties of affine GIT that the natural projections $Y\rightarrow Z$ and $Y_{\pr}\rightarrow Z_{\pr}$ are both quotients by $G$ in the sense of \cite[4.1]{KnopKraftVust1989}.
	
	\begin{lem}
		\label{lem.Cl-quotient}
		Let $Y$ be an irreducible $2$-large non-singular affine $G$-variety.
        \begin{enumerate}
            \item If both $\Pic(Y)$ and $H^1_{\operatorname{alg}}(G,\cO(Y)^{\times})$ are finite, then $\Cl(Z)$ is finite and hence $Z$ is $\bQ$-factorial.

            \item If both $\Pic(Y)$ and $H^1_{\operatorname{alg}}(G,\cO(Y)^{\times})$ are trivial, then $\Cl(Z)$ is trivial and hence $Z$ is factorial. 
        \end{enumerate}
	\end{lem}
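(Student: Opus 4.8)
The plan is to realise $\Cl(Z)$ as a subgroup of the $G$-equivariant Picard group $\Pic_G(Y)$ and then to bound $\Pic_G(Y)$ by means of the exact sequence $0\to H^1_{\operatorname{alg}}(G,\cO(Y)^{\times})\to\Pic_G(Y)\xrightarrow{\ \nu\ }\Pic(Y)$ recalled above. First I would reduce both sides to their principal strata. Since $Y$ is non-singular and $G$ is reductive, $Z=Y/\!/G$ is a normal affine variety, so $Z_{\sing}$ has codimension at least two and hence $\Cl(Z)\cong\Cl(Z_{\reg})=\Pic(Z_{\reg})$; as $Y$ is $2$-large, \cite[Theorem 9.12]{Schwarz1995} gives $Z_{\reg}=Z_{\pr}$ (cf.\ the remark after the proof of Theorem \ref{thm.main}), so $\Cl(Z)\cong\Pic(Z_{\pr})$. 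On the source side, $2$-principality says $Y\setminus Y_{\pr}$ has codimension at least two in the smooth variety $Y$, whence $\cO(Y)^{\times}=\cO(Y_{\pr})^{\times}$, $\Pic(Y)\cong\Pic(Y_{\pr})$, and---extending a line bundle together with the isomorphism realising its $G$-linearization across this codimension-two locus, using normality of $G\times Y$---the restriction map $\Pic_G(Y)\to\Pic_G(Y_{\pr})$ is an isomorphism.

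Next I would compare $\Pic(Z_{\pr})$ with $\Pic_G(Y_{\pr})$. By the discussion preceding the lemma, the restriction $\pi_Y\colon Y_{\pr}\to Z_{\pr}$ is a quotient by $G$ in the sense of \cite[4.1]{KnopKraftVust1989}, so pulling back line bundles with their tautological linearization defines a homomorphism $\pi_Y^{*}\colon\Pic(Z_{\pr})\to\Pic_G(Y_{\pr})$, and this map is injective: if $\pi_Y^{*}L$ is $G$-equivariantly trivial then the projection formula together with reductivity of $G$ give $L\cong(\pi_{Y*}\pi_Y^{*}L)^{G}\cong(\pi_{Y*}\cO_{Y_{\pr}})^{G}=\cO_{Z_{\pr}}$ (this injectivity is \cite[\S\,4]{KnopKraftVust1989}). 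Stringing the two steps together, $\Cl(Z)\cong\Pic(Z_{\pr})$ embeds into $\Pic_G(Y_{\pr})\cong\Pic_G(Y)$. The displayed exact sequence then exhibits $\Pic_G(Y)$ as an extension of a subgroup of $\Pic(Y)$ by $H^1_{\operatorname{alg}}(G,\cO(Y)^{\times})$, so $\Cl(Z)$ is finite when both of these groups are finite and trivial when both are trivial; since a normal affine variety with finite (resp.\ trivial) divisor class group is $\bQ$-factorial (resp.\ factorial), this is exactly the two assertions.

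I expect the bulk of the work to lie in the reduction to the principal strata rather than in the comparison maps. One has to be sure Schwarz's equality $Z_{\pr}=Z_{\reg}$ genuinely holds under the non-linear $2$-large hypotheses, so that the codimension-two excision for $\Cl$ applies verbatim, and one has to carry the codimension-two extension argument for line bundles through the equivariant setting, where the linearization datum lives on $G\times Y$ and must be extended together with its cocycle condition. Once the injectivity statement of \cite[\S\,4]{KnopKraftVust1989} is available, the remaining steps are formal.
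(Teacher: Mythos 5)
Your proposal is correct and follows essentially the same route as the paper's proof: reduce to the principal strata via codimension-two excision, realise $\Cl(Z)\cong\Pic(Z_{\pr})$ as a subgroup of $\Pic_G(Y_{\pr})$ (the paper invokes \cite[Proposition 4.2]{KnopKraftVust1989} for the injectivity you sketch via the projection formula), and conclude from the exact sequence $1\to H^1_{\mathrm{alg}}(G,\cO(Y_{\pr})^{\times})\to\Pic_G(Y_{\pr})\to\Pic(Y_{\pr})$ of \cite[Lemma 2.2]{KnopKraftVust1989}. The only cosmetic differences are that the paper gets $\Cl(Z)\cong\Pic(Z_{\pr})$ directly from $\codim(Z\setminus Z_{\pr})\geq 2$, $Z_{\pr}\subset Z_{\reg}$, and normality of $Z$ without appealing to Schwarz's stronger equality $Z_{\reg}=Z_{\pr}$, and the paper does not need your extra step $\Pic_G(Y)\cong\Pic_G(Y_{\pr})$ since it works with $\Pic_G(Y_{\pr})$ throughout.
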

	
	\begin{proof}
		Since $Y$ is $2$-principal, it follows that $\codim(Y\setminus Y_{\pr})\geq 2$ for the $G$-stable open subset $Y_{\pr}$ of $Y$. In particular, one gets
		\[
		\Pic(Y_{\pr})\cong\Pic(Y)\quad \text{and}\quad \cO(Y)^{\times}=\cO(Y_{\pr})^{\times}.
		\]
        Moreover, since $Z\setminus Z_{\pr}$ has codimension at least two in $Z$, $Z_{\pr}\subset Z_{\reg}$ and $Z$ is normal, we have $\Cl(Z)\cong \Pic(Z_{\pr})$. By \cite[Lemma 2.2]{KnopKraftVust1989}, there exists an exact sequence
        $$
        1 \to H^1_{\rm alg}(G, \cO(Y_{\pr})^{\times}) \to {\rm Pic}_G(Y_{\pr}) \to {\rm Pic}(Y_{\pr}).
        $$ 
       Then applying \cite[Proposition 4.2]{KnopKraftVust1989} to the quotient $Y_{\pr}\rightarrow Z_{\pr}$ shows that  $\Pic(Z_{\pr})$ is
      a subgroup of  $\Pic_G(Y_{\pr})$, so the result follows.
	\end{proof}

\begin{proof}[Proof of Theorem \ref{thm.factoriality}]
	Firstly note that we have natural isomorphisms of groups
	\[
	\Cl(T^*Y/\!\!/\!\!/G) \cong \Pic(T^*Z_{\pr}) \cong \Pic(Z_{\pr}) \cong \Cl(Z),
	\]
	where the first isomorphism follows from Theorem \ref{thm.main}. By Lemma \ref{lem.Cl-quotient}, $\Cl(Z)$ is trivial (resp. finite) if both $H^1_{\operatorname{alg}}(G,\cO(Y)^{\times})$ and $\Pic(Y)$ are trivial (resp. finite). By assumption, either $E(Y)\coloneqq \cO(Y)^{\times}/\bC^{\times}$ is trivial or $G$ is connected, so $H^1(G/G^0, E(Y))$ is trivial. In particular, the following exact sequence from \cite[Proposition 2.3]{KnopKraftVust1989}
	$$
	\mathfrak{X}(G) \to H^1_{\operatorname{alg}}(G,\cO(Y)^{\times}) \to H^1(G/G^0, E(Y)).
	$$
	implies that $H^1_{\operatorname{alg}}(G,\cO(Y)^{\times})$ is trivial (resp. finite) if the character group $\mathfrak{X}(G)$ of $G$ is trivial (resp. finite) and hence if $G_{\operatorname{ab}}$ is trivial (resp. finite).
\end{proof}

\begin{proof}[Proof of Theorem \ref{thm.KLSconj-2large}]
    Recall that the singular locus of terminal singularities has codimension at least three and it follows from \cite[Corollary 1.3]{Fu03} that a locally $\mathbb{Q}$-factorial singular symplectic variety does not admit any symplectic resolution if its singular locus is not of pure codimension two. 

    Since $G$ is semisimple by assumption, it follows that $G$ is connected and $G_{\operatorname{ab}}$ is trivial, therefore $T^*Y/\!\!/\!\!/G$ is a $\bQ$-factorial symplectic variety with terminal singularities by Theorems \ref{thm.main} and \ref{thm.factoriality}. In particular, $T^*Y/\!\!/\!\!/G$ admits a symplectic resolution if and only if $T^*Y/\!\!/\!\!/G$ itself is non-singular, and hence if and only if $Y/\!/G$ is non-singular by Theorem \ref{thm.main}.
\end{proof}

Under some additional assumptions on $Y$, Theorem \ref{thm.factoriality} can be slightly improved.

\begin{prop}
	\label{prop.factoriality}
	Let $Y$ be an irreducible non-singular affine $G$-variety that is $2$-large. Assume that $\cO(Y)^{\times}=\bC^{\times}$,  $Y^G\not=\emptyset$ and $\Pic(Y)=0$. 
	\begin{enumerate}
		\item\label{i1.prop-factoriality} The quotient $M$ is factorial (resp. $\bQ$-factorial) if and only if $\Cl(Z)$ is trivial (resp. a torsion group).
		
		\item\label{i2.prop-factoriality} The divisor class group $\Cl(Z)$ is isomorphic to the kernel of $\fX(G)\rightarrow \fX(H)$, where $(H)$ is the principal isotropy class.
	\end{enumerate}
\end{prop}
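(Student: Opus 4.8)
The plan is to reduce both statements to the chain of isomorphisms $\Cl(M)\cong\Pic(T^{*}Z_{\pr})\cong\Pic(Z_{\pr})\cong\Cl(Z)$ obtained in the proof of Theorem~\ref{thm.factoriality} (the last isomorphism using that $Z$ is normal, $Z_{\pr}\subset Z_{\reg}$ and $\codim(Z\setminus Z_{\pr})\geq 2$). Granting this, part~(1) reduces to the vanishing $\Pic(M)=0$, and part~(2) to an explicit description of $\Pic(Z_{\pr})$ as a subgroup of $\fX(G)$.

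\emph{Step 1: $\Pic(M)=0$.} First observe that the zero section $i\colon Y\hookrightarrow N$ lands in $N$, since $\mu$ vanishes there, and that $N\cap T_{y}^{*}Y$ is a linear subspace for every $y\in Y$; hence the fibrewise scaling on $T^{*}Y$ restricts to a morphism $a\colon\bA^{1}\times N\to N$ with $a(1,x)=x$ and $a(0,x)=i(p(x))$ for $x\in N$. As $N$ is an integral normal variety (Proposition~\ref{prop.Shell-k-modular}), pulling a unit (resp.\ a line bundle) back along $a$ and restricting at $0$ and $1$, together with the $\bA^{1}$-invariance of units and of the Picard group of a normal variety, yields $\cO(N)^{\times}=\cO(Y)^{\times}=\bC^{\times}$ and $\Pic(N)\cong\Pic(Y)=0$. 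Then \cite[Lemma~2.2 and Proposition~2.3]{KnopKraftVust1989} give $\Pic_{G}(N)=H^{1}_{\operatorname{alg}}(G,\cO(N)^{\times})\cong\fX(G)$, a character $\chi$ corresponding to $\cO_{N}$ with $G$ acting through $\chi$. Since $N\to M$ is an affine GIT quotient, \cite[Proposition~4.2]{KnopKraftVust1989} shows that $\pi_{N}^{*}\colon\Pic(M)\hookrightarrow\Pic_{G}(N)\cong\fX(G)$ is injective with image the characters restricting to the trivial $G$-equivariant line bundle on every closed $G$-orbit of $N$. Choosing $y_{0}\in Y^{G}\subset N$, whose orbit is the point $\{y_{0}\}$ with $G_{y_{0}}=G$, forces any such $\chi$ to be trivial, so $\Pic(M)=0$.

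\emph{Step 2: the two assertions.} For (1): $M$ is affine and normal with $\Pic(M)=0$. A normal Noetherian domain is a UFD iff its divisor class group vanishes, so $M$ is factorial $\iff\Cl(M)=0\iff\Cl(Z)=0$; and since the image of $\Pic(M)$ in $\Cl(M)$ is zero, a Weil divisor on $M$ is $\bQ$-Cartier exactly when its class in $\Cl(M)$ is torsion, so $M$ is $\bQ$-factorial $\iff\Cl(M)\cong\Cl(Z)$ is a torsion group. For (2): the same group-cohomology computation applied to $Y_{\pr}$ — whose Picard group and units coincide with those of $Y$ because $Y$ is $2$-principal — gives $\Pic_{G}(Y_{\pr})\cong\fX(G)$. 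The quotient $\pi_{Y}\colon Y_{\pr}\to Z_{\pr}$ is a $G$-fibre bundle with fibre $G/H$ (as $Y$ has FPIG and $Y_{\pr}$ consists of closed principal orbits), so \cite[Proposition~4.2]{KnopKraftVust1989} identifies $\pi_{Y}^{*}\Pic(Z_{\pr})$ with the kernel of the restriction $\Pic_{G}(Y_{\pr})\to\Pic_{G}(G/H)=\fX(H)$; under $\Pic_{G}(Y_{\pr})\cong\fX(G)$ this restriction is $\chi\mapsto\chi|_{H}$. Combined with $\Cl(Z)\cong\Pic(Z_{\pr})$, this gives $\Cl(Z)\cong\ker\bigl(\fX(G)\to\fX(H)\bigr)$.

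\emph{Expected difficulty.} The technical heart is Step~1: squeezing $\Pic(N)=0$ and $\cO(N)^{\times}=\bC^{\times}$ out of the cone-over-$Y$ structure of $N$, and then reading off from \cite[Proposition~4.2]{KnopKraftVust1989} that the residual characters in $\Pic_{G}(N)$ are killed by the fixed point $y_{0}$. This is precisely where all three hypotheses $\Pic(Y)=0$, $\cO(Y)^{\times}=\bC^{\times}$ and $Y^{G}\neq\emptyset$ enter, and the resulting vanishing $\Pic(M)=0$ is what upgrades ``$\bQ$-Cartier'' to ``torsion class'' in part~(1); the analogous bookkeeping for $\Pic(Z_{\pr})$ in part~(2) — checking, via irreducibility of $Z_{\pr}$, that triviality on one principal orbit forces it on all — is comparatively routine.
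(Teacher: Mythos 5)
Your proof is correct and reaches the same conclusions, but by a route that differs from the paper's in part~(1). The paper applies \cite[Corollary~5.3]{KnopKraftVust1989} directly to the quotient $Y\to Z$ (using $\Pic(Y)=0$, $\cO(Y)^\times=\bC^\times$, $Y^G\ne\emptyset$) to obtain $\Pic(Z)=0$, and then deduces the factoriality criteria by a commutative-diagram chase involving the zero section $\bfZ\subset M$ together with the isomorphism $\Cl(M)\cong\Cl(Z)$ already established in the proof of Theorem~\ref{thm.factoriality}: in short, it shows that for a Weil divisor $D$ on $Z$ the pull-back $p_Z^*D$ is Cartier iff $D$ is Cartier iff $D$ is trivial in $\Cl(Z)$, leaving $\Pic(M)=0$ implicit. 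You instead transfer the hypotheses from $Y$ to the shell $N\subset T^*Y$ using the fibrewise-scaling contraction (valid since $\mu$ is fibrewise linear and $N$ is integral normal, so $\Pic$ and units are $\bA^1$-invariant), and then apply the Knop--Kraft--Vust machinery (\cite[Lemma~2.2, Prop.~2.3, Prop.~4.2]{KnopKraftVust1989}) to the quotient $N\to M$, killing the residual characters with the fixed point $y_0\in Y^G$ to conclude $\Pic(M)=0$ outright. In effect you rederive the content of \cite[Cor.~5.3]{KnopKraftVust1989} for $N\to M$ rather than $Y\to Z$; this is slightly more work than necessary (once $\Pic(N)=0$, $\cO(N)^\times=\bC^\times$ and $N^G\ne\emptyset$ are established, one could simply invoke that corollary), but the explicit vanishing $\Pic(M)=0$ makes the factoriality dichotomy in part~(1) immediate, which is a genuine advantage in clarity over the paper's diagram argument. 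For part~(2) your argument is essentially the paper's: the paper cites \cite[Prop.~5.1 and Remark~4.5]{KnopKraftVust1989} to get the exact sequence $1\to\Pic(Z_{\pr})\to\fX(G)\to\fX(H)$, while you walk through the same computation via \cite[Prop.~4.2]{KnopKraftVust1989} applied to the $G/H$-bundle $Y_{\pr}\to Z_{\pr}$, with the same identification $\Pic_G(Y_{\pr})\cong\fX(G)$ and $\Pic_G(G/H)\cong\fX(H)$.
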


\begin{proof}
	Applying \cite[Corollary 5.3]{KnopKraftVust1989} to the quotient $Y\rightarrow Z$ yields $\Pic(Z)=0$. Let $p_Z\colon M\rightarrow Z$ be the natural projection and let $\bfZ$ be the zero section of $p_Z$ with the isomorphism $\iota=p_Z|_{\bfZ}\colon \bfZ\longrightarrow Z$. Consider the following commutative diagram:
	\[
	\begin{tikzcd}[row sep=large, column sep=large]
		0=\Pic(\bfZ) 
		&  \Pic(M) \arrow[l] \arrow[r,hookrightarrow] 
		& \Cl(T^*Z_{\pr})=\Cl(M) \\
		& 0=\Pic(Z) \arrow[u,"{p_Z^*}"] \arrow[ul, "\cong" {below, sloped}, "{\iota^*}" above] \arrow[r,hookrightarrow]
		& \Cl(Z_{\pr}) = \Cl(Z) \arrow[u,"{p_Z^*}"].
	\end{tikzcd}
	\]
	Then one easily derives that for a Weil divisor $D$ on $Z$, the pull-back $p_Z^*D$ is Cartier (resp. $\bQ$-Cartier) if and only if $D$ is Cartier (resp. $\bQ$-Cartier) and if and only if $D$ is trivial (resp. a torsion) in $\Cl(Z)$, and hence \ref{i1.prop-factoriality} follows.
	
	Finally, as $\Cl(Z)=\Pic(Z_{\pr})$, we may apply \cite[Proposition 5.1 and Remark 4.5]{KnopKraftVust1989} to the quotient $Y_{\pr}\to Z_{\pr}$ to obtain the exact sequence
	\[
	1 \longrightarrow \Pic(Z_{\pr}) \longrightarrow \mathfrak{X}(G) \longrightarrow \mathfrak{X}(H),
	\] 
	which then gives the desired statement \ref{i2.prop-factoriality}.
\end{proof}

As an application, we prove the following partial converse to Theorem \ref{thm.factoriality} (for related results in the linear case, see \cite[Theorem 1.2 and Corollary 2.8]{BellamySchedler2019}).

\begin{cor}
	\label{cor.Conv-Thm-factoriality}
	Under the notation and assumptions of Proposition \ref{prop.factoriality}, the following hold:
	\begin{enumerate}
		\item The quotient $M$ is $\bQ$-factorial if and only if $G_{\operatorname{ab}}$ is finite.
		\item If $H$ is trivial, then $M$ is factorial if and only if $G$ is perfect.
	\end{enumerate}
\end{cor}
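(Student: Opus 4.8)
The plan is to deduce the corollary directly from Proposition \ref{prop.factoriality} together with elementary facts about character groups. By Proposition \ref{prop.factoriality}\ref{i1.prop-factoriality}, $M$ is factorial (resp.\ $\bQ$-factorial) exactly when $\Cl(Z)$ is trivial (resp.\ a torsion group), and by Proposition \ref{prop.factoriality}\ref{i2.prop-factoriality}, $\Cl(Z)\cong K\coloneqq\ker\bigl(\fX(G)\rightarrow\fX(H)\bigr)$, where $(H)$ is the principal isotropy class. So everything reduces to this subgroup $K$ of the finitely generated abelian group $\fX(G)$. I will use throughout that $\fX(G)=\fX(G_{\operatorname{ab}})$ (a character of $G$ kills $[G,G]$ since $\bC^{\times}$ is abelian) and that $G_{\operatorname{ab}}$, being a commutative reductive group, is diagonalizable, so that $\fX(G_{\operatorname{ab}})$ is trivial (resp.\ finite) if and only if $G_{\operatorname{ab}}$ is; recall too that $G_{\operatorname{ab}}=\{e\}$ means exactly that $G$ is perfect.

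For part (2): if $H$ is trivial then $\fX(H)=0$, hence $K=\fX(G)=\fX(G_{\operatorname{ab}})$. Therefore $M$ is factorial $\iff$ $\Cl(Z)=0$ $\iff$ $\fX(G_{\operatorname{ab}})=0$ $\iff$ $G_{\operatorname{ab}}=\{e\}$ $\iff$ $G$ is perfect, which is the asserted equivalence.

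For part (1): since $Y$ is $2$-large it has FPIG, so the principal isotropy group $H$ is finite, and hence $\fX(H)=\Hom(H,\bC^{\times})$ is a finite group. Consequently the image of $\fX(G)$ in $\fX(H)$ is finite, so $K$ is of finite index in $\fX(G)$; a finitely generated abelian group that contains a torsion subgroup of finite index is itself finite, so $K$ is a torsion group if and only if $\fX(G)$ is finite, i.e.\ if and only if $G_{\operatorname{ab}}$ is finite. Combining with Proposition \ref{prop.factoriality}\ref{i1.prop-factoriality}: $M$ is $\bQ$-factorial $\iff$ $\Cl(Z)$ is torsion $\iff$ $K$ is torsion $\iff$ $G_{\operatorname{ab}}$ is finite.

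I do not expect a genuine obstacle here, since the geometric content lies entirely in Proposition \ref{prop.factoriality} and what remains is bookkeeping with $\fX$. The one step deserving care is the passage in part (1) from ``$\Cl(Z)$ is torsion'' to ``$G_{\operatorname{ab}}$ is finite'': it uses the finiteness of $H$, hence of $\fX(H)$, to guarantee that $K$ is commensurable with $\fX(G)$, and it is precisely here that the FPIG hypothesis built into $2$-largeness is used.
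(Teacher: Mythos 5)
Your proof is correct, and it differs from the paper's in a noticeable but benign way. The paper's proof offloads the forward implications of both parts to Theorem~\ref{thm.factoriality} and then handles only the remaining converse of part~(1) by an explicit contradiction argument: assuming $G_{\operatorname{ab}}$ is infinite, it picks a surjective character $\chi\colon G\to\bC^{\times}$, notes that finiteness of the principal isotropy group $H$ forces $\chi^m|_H=1$ for some $m>0$, and thus exhibits $\chi^m$ as an explicit non-torsion element of $\Cl(Z)\cong\ker(\fX(G)\to\fX(H))$, contradicting $\bQ$-factoriality via Proposition~\ref{prop.factoriality}. You instead derive everything uniformly from Proposition~\ref{prop.factoriality} alone, translating both statements into assertions about the subgroup $K=\ker(\fX(G)\to\fX(H))$ of the finitely generated abelian group $\fX(G)=\fX(G_{\operatorname{ab}})$, and dispatch the converse of~(1) by the abstract observation that $K$ has finite index in $\fX(G)$ (because $\fX(H)$ is finite) so that $K$ is torsion iff $\fX(G)$ is finite. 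What your approach buys is uniformity and self-containment — you never need to re-invoke Theorem~\ref{thm.factoriality}, and the converse of~(1) drops out of one line of f.g.\ abelian group theory rather than an explicit element construction. What the paper's approach buys is a concrete witness: the reader sees a specific free divisor class $\chi^m$ obstructing $\bQ$-factoriality, which is useful intuition. Both proofs rest on the same FPIG observation that $H$ is finite, and both are equally rigorous.
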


\begin{proof}
	By Theorem \ref{thm.factoriality} and Proposition \ref{prop.factoriality}, we only need to prove that if $M$ is $\bQ$-factorial, then $G_{\operatorname{ab}}$ is finite. Assume to the contrary that $G_{\operatorname{ab}}$ is not finite. Then we can choose a surjective character $\chi\colon G\rightarrow \bC^{\times}$. In particular, $\chi^n\not=1$ for all $n\geq 1$. Nevertheless, since the principal isotropy gruop $H$ is finite, there exists $m\in \bZ_{>0}$ such that $\chi^m|_H=1$, which then implies that $\Cl(Z)$ is not torsion and hence $M$ is not $\bQ$-factorial by Proposition \ref{prop.factoriality}, which is absurd.
\end{proof}

The following result generalizes \cite[Corollary 1.3]{BellamySchedler2019}.

\begin{cor}
	Under the notation and assumptions of Proposition \ref{prop.factoriality}, if $G_{\operatorname{ab}}$ is finite and $G$ acts non-trivially on $Y$, then $M$ does not admit a symplectic resolution.
\end{cor}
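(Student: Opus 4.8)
The plan is to reduce, using Theorem~\ref{thm.main} together with Corollary~\ref{cor.Conv-Thm-factoriality}, to the single assertion that $Z\coloneqq Y/\!/G$ is singular. Since $G_{\operatorname{ab}}$ is finite, Corollary~\ref{cor.Conv-Thm-factoriality} shows that $M=T^*Y/\!\!/\!\!/G$ is $\bQ$-factorial, and part~\ref{i1.terminal} of Theorem~\ref{thm.main} shows that $M$ is a symplectic variety with terminal singularities; in particular, if $M$ is singular, then its singular locus has codimension at least three, hence is not of pure codimension two. By \cite[Corollary 1.3]{Fu03}, a locally $\bQ$-factorial singular symplectic variety whose singular locus is not of pure codimension two admits no symplectic resolution; therefore, if $M$ admits a symplectic resolution, then $M$ is non-singular, and hence $Z$ is non-singular by part~\ref{i3.singularlocus} of Theorem~\ref{thm.main}. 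So it is enough to prove that $Z$ is singular.

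To prove this I would argue by contradiction. Suppose $Z$ is non-singular. As $Y$ is a non-singular $2$-large affine $G$-variety, the remark following the proof of Theorem~\ref{thm.main} (which combines \cite[Theorem 9.12]{Schwarz1995} with \cite[Proposition 6.4]{BrionKraftSchwarz2025}) shows that $\pi_Y\colon Y\rightarrow Z$ is then a $G$-fiber bundle whose fibers are isomorphic, as $G$-varieties, to the homogeneous space $G/H$, where $(H)$ is the principal isotropy class. Now invoke the hypothesis $Y^G\neq\emptyset$: the fiber of $\pi_Y$ through a $G$-fixed point $y_0$ is a single $G$-orbit containing $y_0$, hence equals $\{y_0\}$ and is therefore isomorphic to $G/G$. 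Since every fiber is isomorphic to $G/H$, this forces $H=G$; thus every fiber is a point, $\pi_Y$ is an isomorphism, and $G$ acts trivially on $Y$, contradicting the hypothesis that the $G$-action on $Y$ is non-trivial. Hence $Z$ is singular, as required.

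The only step that requires genuine thought is the argument in the second paragraph: that on a $2$-large variety with a $G$-fixed point, smoothness of the quotient already forces the $G$-action to be trivial. Everything else is a routine assembly of Theorem~\ref{thm.main}, Corollary~\ref{cor.Conv-Thm-factoriality} and the resolution criterion of \cite{Fu03}, so I do not anticipate any serious obstacle; this argument also recovers, in the present generality, the non-existence statement of \cite[Corollary 1.3]{BellamySchedler2019}.
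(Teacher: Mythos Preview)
Your proof is correct and follows essentially the same approach as the paper: both reduce (via Theorem~\ref{thm.main}, $\bQ$-factoriality, and the criterion from \cite{Fu03}) to showing that $Z$ is singular, and then derive a contradiction from the existence of a $G$-fixed point when $Z$ is assumed non-singular. The only cosmetic difference is that the paper argues via $Z_{\reg}=Z_{\pr}$ and FPIG to conclude $G$ is finite and acts trivially, whereas you use the fiber-bundle description from the remark after Theorem~\ref{thm.main} to reach the same conclusion.
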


\begin{proof}
	By Theorem \ref{thm.main} and Theorem \ref{thm.factoriality}, it remains to show that $Z$ is singular. Indeed, by \cite[Theorem 9.12]{Schwarz1995}, we have $Z_{\reg}=Z_{\pr}$. In particular, if $Z$ is non-singular, then $Z_{\pr}=Z$ and $Y_{\pr}=Y$, and it follows that $G$ is a finite group acting trivially on $Y$ as $Y^G\not=\emptyset$, which is a contradiction.
\end{proof}

\subsection{Examples}  
\label{ss.Ex-Sharp}

We gather some examples to illustrate the sharpness of Theorems \ref{thm.main} and \ref{thm.factoriality}.

\begin{example}[\protect{\cite[Example 3.5]{BellamySchedler2019}}]
	Let $V=(\mathfrak{sl}_2)^2$, as a representation of $G=\operatorname{PGL}_2$, which is $1$-large, but not $2$-large. The Hamiltonian reduction $T^*V/\!\!/\!\!/G$ is identified with the locus of square zero matrices in $\mathfrak{sp}_4$, and in particular is a symplectic variety. Nevertheless, its singular locus is the codimension two locus consisting of rank one matrices in $\mathfrak{sp}_4$ and therefore it is not terminal, so the part \ref{i1.terminal} of Theorem \ref{thm.main} is optimal.
\end{example}

\begin{example}
	\label{ex.hypertoric} For $0 \leq k \leq n$, let $G=\bC^\times$ act on  $V_k=\bC^n$ by 
	$$\lambda \cdot (x_1, \dots, x_n) = (\lambda x_1, \dots, \lambda x_k, \lambda^{-1} x_{k+1}, \cdots, \lambda^{-1} x_n).$$ It is easy to see that for any $k$, the Hamiltonian reduction $T^*V_k/\!\!/\!\!/G$ is isomorphic to the minimal nilpotent orbit closure $\overline{\mathbf{O}}_{{\rm min}}$ in $\mathfrak{sl}_n$. 
	For $z\in V_k$, the $G$-orbit $Gz$ is closed in $V_k$ if and only if $z=0$ or $z \in V_k \setminus (F_k \cup F'_{n-k})$, where $F_k=\{x_1=\cdots=x_k=0\}$ and $F'_{n-k}=\{x_{k+1}=\cdots=x_n=0\}$. It follows that
	$(V_k)_{\pr} = V_k \setminus (F_k \cup F'_{n-k})$, so $V_k$ is 2-large if and only if $2\leq k \leq n-2$. In this case,
	Theorem \ref{thm.main} implies that $\overline{\mathbf{O}}_{{\rm min}}$ is the affine closure of $T^*((V_k/\!/G)_{\pr})$. On the other hand, $(V_k/\!/G)_{\pr}$ is just the punctured affine cone of the Segre variety
	$\bP^{k-1} \times \bP^{n-k-1} \subset \bP^{k(n-k)-1}$. This shows that $\overline{\mathbf{O}}_{{\rm min}}$ is the affine closure of the cotangent bundle of the punctured affine cone of a Segre variety, which
	is a special case of \cite[Theorem 1.4]{FuLiu2025} but with a new proof.  For $k=0$, $V_0/\!/G$ is just one point. For $k=1$, $V_1$ is $1$-large with 
	$V_1/\!/G \cong \bC^{n-1}$ and $(V_1/\!/G)_{\pr} \cong \bC^{n-1} \setminus \{0\}$. It follows that the affine closure of $T^*(V_1/\!/G)_{\pr}$ is $T^*\bC^{n-1}$ which is different from $\overline{\mathbf{O}}_{{\rm min}}$. Hence, parts  \ref{i2.affineclosure} and \ref{i3.singularlocus} of Theorem \ref{thm.main} are sharp.	
\end{example}

\begin{example}[\protect{\cite[Example 2.12]{BellamySchedler2019}}]
	Following the notation in Example \ref{ex.hypertoric},  we have  $G_{\operatorname{ab}}=G=\bC^{\times}$. Moreover, it is well-known that $\overline{\mathbf{O}}_{{\rm min}}$ is not $\bQ$-factorial and admits a symplectic resolution by collapsing the zero section of $T^*\bP^{n-1}$. 
	
	Now, let $n=2k$ and let $G'\coloneqq \bC^{\times}\rtimes \bZ_2$, where $s\in \bZ_2$ acts on $\bC^{\times}$ by $s(\lambda)=\lambda^{-1}$. Then $[G',G']=\bC^{\times}$ and $G_{\operatorname{ab}}\cong \bZ_2$. Let $s$ act on $V_k=\bC^{2k}$ as follows:
	\[
	s\cdot (x_1,\dots,x_k,x_{k+1},\dots,x_{2k}) = (x_{k+1},\dots,x_{2k},x_1,\dots,x_k).
	\]
    Then $V_k$ is a $2k$-large representation of $G'$ and $T^*V_k/\!\!/\!\!/G'$ is isomorphic to $\overline{\textbf{O}}_{\min}/\bZ_2$, which is isomorphic to the nilpotent orbit closure $\overline{\textbf{O}}_{[2^2,1^{2k-4}]}$ in $\mathfrak{sp}_{2k}$, or more precisely,
	\[
	\overline{\textbf{O}}_{[2^2,1^{2k-4}]} = \left\{A\in \mathfrak{sp}_{2k}\mid A^2=0,\,\rank(A)\leq 2\right\}
	\]
	The nilpotent closure $\overline{\textbf{O}}_{[2^2,1^{2k-4}]}$ is  next to the minimal orbit closure in the sense that it is contained in all other nilpotent orbit closures, and $\Cl(\overline{\textbf{O}}_{[2^2,1^{2k-4}]})\cong \bZ_2$ by \cite[Corollary 2.7]{Fu03}. These examples show that we cannot remove the triviality and finiteness conditions on $G_{\operatorname{ab}}$ in Theorem \ref{thm.factoriality}. In this sense, our Theorem \ref{thm.factoriality} is also optimal.
\end{example}

	\section{Differential operators on orbit spaces}
    \label{s.DO}
	
	Let $Z$ be an irreducible normal affine variety. Then the rings of differential operators on $Z$ and $Z_{\reg}$ coincide, i.e., $\cD(Z)=\cD(Z_{\reg})$. Let $S(Z)$ be the graded ring 
	\[
	S(Z)\coloneqq \bigoplus_{p\geq 0} H^0(Z_{\reg},\aS^p T_{Z_{\reg}}) \cong \cO(T^*Z_{\reg}).
	\]
    An element $\phi\in S^k(Z):=H^0(Z_{\reg},\aS^k T_{Z_{\reg}})$ is called an \emph{order $k$ symbol}. The ring $\cD(Z)$ has a filtration $\{\cD^k(Z)\}$ according to the order of differentiations, and the associated graded ring $\gr \cD(Z)$ is commutative. Then the order $k$ symbol map gives a linear map $\sigma_{Z,k}\colon \cD^{k}(Z)\rightarrow S^k(Z)$, which defines an injective homomorphism of graded algebras, called the \emph{graded symbol map}:
	\[
	\bar{\sigma}_Z\colon \gr\cD(Z) \longrightarrow S(Z).
	\]
	The \emph{symbol map} is the map
	\[
	\sigma_Z\colon \cD(Z)\longrightarrow S(Z),
	\]
	defined by $\sigma_Z(D)=\sigma_{Z,k}(D)$ if $D$ has order $k$.  An order $k$ symbol $\phi$ \emph{quantizes} into $D$ if $D\in \cD^k(Z)$ has order $k$ and the \emph{principal symbol} $\sigma_{Z,k}(D)$ of $D$ is $\phi$. If $Z$ is non-singular, then every symbol can be quantized, but for singular $Z$, the surjectivity of the symbol map is not well-understood.
	
	We will consider the case where $Z$ is a quotient, i.e. $Z=Y/\!/G$, where $Y$ is an irreducible non-singular affine $G$-variety. This problem was investigated in \cite{Schwarz1995}. Recall that there exists a canonical morphism $(\pi_Y)_*\colon \cD(Y)^G \rightarrow \cD(Z)$ that preserves the filtrations by order, where $\pi_Y\colon Y\rightarrow Z$ is the canonical quotient morphism. Then $(\pi_Y)_*$ is called \emph{graded surjective} if the induced graded morphism $\gr(\pi_Y)_*\colon \gr(\cD(Y)^G) \longrightarrow \gr(\cD(Z))$ is surjective.
	
	As an application of Theorem \ref{thm.main}, we give a short proof of the graded surjectivity of $2$-large quotients established in \cite{Schwarz1995}, and also show the surjectivity of the symbol map of such varieties.

	\begin{proof}[Proof of Theorem \ref{thm.Diff-Quot}]
		The shell $N\subset T^*Y$ is an irreducible normal affine variety such that $\cO(T^*Y)\rightarrow \cO(N)$ is surjective. Since $G$ is reductive, taking invariant subrings yields a surjection
		\[
		S(Y)^G = \cO(T^*Y)^G \longrightarrow \cO(N)^G.
		\]
		and an isomorphism
		\[
		\gr(\cD(Y)^G) \longrightarrow (\gr\cD(Y))^G.
		\]
		Since $Y$ is a non-singular affine variety, the graded symbol map $\bar{\sigma}_Y\colon \gr\cD(Y)\rightarrow S(Y)$ is an isomorphism, so we get a surjection
		\[
		\alpha\colon \gr(\cD(Y)^G) \xlongrightarrow{\cong} (\gr\cD(Y))^G \xlongrightarrow{\cong} S(Y)^G \longrightarrow \cO(N)^G.
		\]
		Thanks to Theorem \ref{thm.main}, there exists an canonical isomorphism
		\[
		 \iota\colon \cO(N)^G\cong \cO(T^*Y/\!\!/\!\!/G) \longrightarrow  S(Z_{\reg})=S(Z),
		\]
		and one easily checks that the following diagram is commutative:
		\[
		\begin{tikzcd}[row sep=large,column sep=large]
			\gr(\cD(Y)^G) \arrow[d,"{\gr(\pi_Y)_*}" left] \arrow[r,"\alpha"]
			    &  \cO(N)^G \arrow[d,"\iota"] \\
			\gr\cD(Z) \arrow[r,"\bar{\sigma}_Z"]
			    &  S(Z).
		\end{tikzcd}
		\]
		Since $\alpha$ is surjective and $\iota$ is an isomorphism, it follows that $\bar{\sigma}_Z$ is surjective and hence an isomorphism, which implies that $\gr(\pi_Y)_*$ is surjective.
        As $\cD(Z) \to {\rm gr} \cD(Z)$ is surjective, $\sigma_Z$ is also surjective.
	\end{proof}

\appendix	

\section{Hamiltonian slice theorem}
\label{s.HSThm}

	\subsection{Set-up} 
	
	 Let $G$ be a complex reductive algebraic group with Lie algebra $\fg$ and let $X$ be an affine quasi-Hamiltonian $G$-variety. Let $x\in N\cap X_{\reg}$ be a given point such that the orbit $O_x=Gx$ is closed. Let $H$ be the isotropy subgroup of $G$ at $x$ with Lie algebra $\fh$. As the orbit $O_x=Gx$ is closed, $H$ is reductive by Matsushima's theorem \cite{Matsushima1960}, and it follows from Luna's slice
	theorem (see \cite{Luna1973}) that there exists a locally closed non-singular affine $H$-stable subset
	$S\subset X$ containing $x$, called an \emph{\'etale slice}, such that the morphism 
	\[
	\varphi\colon Y\coloneqq G\times_H S\longrightarrow X,\quad [g,s]\longmapsto gs,
	\]
	is excellent and the image $GS\subset X$ is open affine and is contained in $X_{\rm reg}$ as $S$ is non-singular. 
	
	The symplectic form on $X_{\reg}$ can be pulled back to a symplectic form
	$\omega_Y$ on $Y$, making $(Y,\omega_Y)$ a non-singular Hamiltonian $G$-variety, and
	the following diagram is commutative:
	\[
	\begin{tikzcd}[row sep=large,column sep=large]
		Y \arrow[r,"\varphi"] \arrow[dr,"\mu_{Y}"]
		& X \arrow[d,"\mu_{X}"] \\
		& \fg^*,
	\end{tikzcd}
	\]
	where $\mu_X=\mu_{G,X}$ and $\mu_{Y}=\mu_{G,Y}$. Let $y=[e,x]\in Y$. Then $\varphi(y)=x$. 
	
	\begin{lem}
		\label{lem.T_yO_y-isotropic}
		The orbit $O_y\coloneqq Gy$ is isotropic with respect to $\omega_Y$. 
	\end{lem}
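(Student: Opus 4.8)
The plan is to compute the restriction of the symplectic form $\omega_Y$ to the tangent space $T_yO_y$ and show it vanishes identically, using the moment map identity from condition (H2). Since $O_y = Gy$ is a $G$-orbit through $y$, its tangent space at $y$ is $T_yO_y = \{V_\xi(y) \mid \xi \in \fg\}$, the span of the velocity vector fields. So it suffices to show $\omega_Y(V_\xi(y), V_\eta(y)) = 0$ for all $\xi, \eta \in \fg$.

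First I would use the defining property of the Hamiltonian structure: by (H2), $V_\xi = V_{H_\xi}$ is the skew-gradient of $H_\xi = \langle \mu_Y(\cdot), \xi\rangle$, so $\omega_Y(V_\xi, \cdot) = dH_\xi$. Evaluating at $y$ on the vector $V_\eta(y)$ gives
\[
\omega_Y(V_\xi(y), V_\eta(y)) = dH_\xi(V_\eta(y)) = \bigl(V_\eta \cdot H_\xi\bigr)(y) = \langle d_y\mu_Y(V_\eta(y)), \xi\rangle.
\]
Now recall from the proof of Lemma~\ref{lem.Prop-of-Shell} the identity $\langle d_y\mu_Y(v), \xi\rangle = \omega_Y(V_\xi(y), v)$ for all $v$; but more directly, since $y = [e,x] \in N_Y$ lies in the shell (as $x \in N_X$ and $\varphi$ is moment-map compatible, $\mu_Y(y) = \mu_X(x) = 0$), and the shell is $G$-stable with $\mu_Y$ being $G$-equivariant, the function $H_\xi = \langle \mu_Y, \xi\rangle$ vanishes on $N_Y \ni y$. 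The orbit $O_y$ is contained in $N_Y$ because $\mu_Y$ is $G$-equivariant and $\mu_Y(y) = 0$ is a fixed point of the coadjoint action only if... — more carefully, $\mu_Y(gy) = \mathrm{Ad}^*(g)\mu_Y(y) = \mathrm{Ad}^*(g)(0) = 0$, so indeed $O_y \subset N_Y = \mu_Y^{-1}(0)$. Hence $H_\xi$ is constant (equal to zero) along $O_y$, so its derivative along any tangent vector $V_\eta(y) \in T_yO_y$ vanishes: $dH_\xi(V_\eta(y)) = 0$. Combined with the first display, this gives $\omega_Y(V_\xi(y), V_\eta(y)) = 0$.

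Therefore $\omega_Y|_{T_yO_y} = 0$, i.e., $T_yO_y$ is isotropic. I do not anticipate a genuine obstacle here; the only point requiring a little care is confirming that $O_y \subset N_Y$, which follows from $G$-equivariance of $\mu_Y$ and the fact that $0 \in \fg^*$ is fixed by the coadjoint action. Everything else is a direct unwinding of the Hamiltonian axioms, and the same computation shows more generally that the level set $\mu_Y^{-1}(0)$ has isotropic $G$-orbits — the classical statement that in Hamiltonian reduction the null foliation on the zero fiber is tangent to the $G$-orbits.
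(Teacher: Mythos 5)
Your proof is correct and follows essentially the same route as the paper's: both arguments observe that $O_y\subset\mu_Y^{-1}(0)$ by $G$-equivariance of the moment map, so $H_\xi$ is constant on the orbit and $\omega_Y(V_\xi,V_{\xi'})=dH_\xi(V_{\xi'})$ vanishes on $T_yO_y$. The paper phrases this along the whole orbit rather than just at $y$, but the content is identical.
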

	
	\begin{proof}
		By assumption, $O_y$ is contained in $\mu_Y^{-1}(0)$, so the restricted Hamiltonian function $H_{\xi}|_{O_y}$ is a constant and the restriction of the Hamiltonian vector field $V_{\xi}$ to
		$O_y$ is tangent to $O_y$ for any $\xi\in \fg$. In particular, we have
		\[
		\omega_{Y}(V_{\xi},V_{\xi'})|_{O_y} = (dH_{\xi})|_{O_y} (V_{\xi'}|_{O_y}) = d(H_{\xi}|_{O_y})(V_{\xi'}|_{O_y}) = 0.
		\]
		for any $\xi$, $\xi'\in \fg$. Then the result follows since the $V_\xi$'s, $\xi\in \fg$,
		generate $T{O_y}$.
	\end{proof}
	
	\subsection{Auxiliary constructions}
	
	Let us consider the following composition of linear maps:
	\[
	\alpha\colon \fg  \longrightarrow  T_y Y  \xlongrightarrow{\omega_{Y,y}} T_y^*Y, \quad
	\xi        \longmapsto  V_{\xi}(y) \longmapsto dH_{\xi}(y).
	\]
	The image of $\fg$ in $T_y Y$ is exactly $T_y O_y$. Let $\fm_{Y,y}\subset
	\cO(Y)$ be the maximal ideal corresponding to $y$. Then $H_{\xi}\in \fm_{Y,y}$ for any
	$\xi\in \fg$ as $\mu_Y(y)=\mu_X(x)=0$. Moreover, under the canonical identification
	$T_y^*Y\cong \fm_{Y,y}/\fm_{Y,y}^2$, the linear map $\alpha$ is identified with the following map:
	\[
	\fg\longrightarrow \fm_{Y,y}/\fm_{Y,y}^2,\quad \xi \longmapsto [H_{\xi}].
	\] 
	Since $H$ is the isotropy subgroup and $\omega_{Y,y}$ is an isomorphism, the kernel
	of $\alpha$ is the subalgebra $\fh$. Since $H$ is reductive
	and $\fh$ is $H$-stable, we can choose an $H$-stable subspace $\fl$ of $\fg$ such
	that $\fg\cong \fh\oplus \fl$ and the induced linear map
	\[
	\alpha|_{\fl}\colon \fl \longrightarrow T_y Y \longrightarrow T_y^* Y
	\]
	is injective. 
	
	Let $S_y (\cong S)$ be the fiber of $Y=G\times_H S\rightarrow G/H$ over $[H]$. Then $y\in S_y$.
	Let $\fm_{S_y,y}\subset \cO(S_y)$ be the maximal ideal corresponding to $y$. Then we have a
	composition of linear maps
	\[
	\beta\colon \fg\longrightarrow T_y Y\longrightarrow T_y^* Y \cong \fm_{Y,y}/\fm_{Y,y}^2
	\longrightarrow T^*_y S_y \cong \fm_{S_y,y}/\fm_{S_y,y}^2,
	\]
	which sends $\xi\in \fg$ to $[H_{\xi}|_{S_y}]$.
	
	\begin{lem}
		\label{lem.Injectivity-restrict-to-S_y}
		The linear map $\beta|_{\fl}\colon \fl\rightarrow T_y^*S_y$ is injective.
	\end{lem}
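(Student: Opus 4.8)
The plan is to deduce the injectivity of $\beta|_{\fl}$ from the already-established injectivity of $\alpha|_{\fl}$. Observe first that $\beta$ factors as $\alpha$ followed by the restriction map $T_y^*Y\to T_y^*S_y$ dual to the inclusion $T_yS_y\hookrightarrow T_yY$; concretely, $\beta(\xi)$ is the linear functional $v\mapsto \omega_{Y,y}(V_\xi(y),v)$, but evaluated only on vectors $v$ lying in the subspace $T_yS_y\subset T_yY$. So the task is to show that if $\xi\in\fl$ and the functional $\alpha(\xi)$ vanishes on $T_yS_y$, then in fact $\alpha(\xi)=0$ on all of $T_yY$, which returns us to the injectivity of $\alpha|_{\fl}$.

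To make this work I would use two ingredients. The first is the direct sum decomposition $T_yY=T_yO_y\oplus T_yS_y$: at $y=[e,x]$ the differential of $G\times S\to Y=G\times_H S$ has kernel $\fh\times 0$ (because $H=G_x$ fixes $x$, so the $H$-velocity vector at $x$ vanishes), identifying $T_yY$ with $(\fg/\fh)\oplus T_xS$, where the first summand is $T_yO_y$ and the second is $T_yS_y$. The second ingredient is Lemma \ref{lem.T_yO_y-isotropic}: since $O_y$ is isotropic for $\omega_Y$, we have $\omega_{Y,y}(V_\xi(y),V_{\xi'}(y))=0$ for all $\xi,\xi'\in\fg$, and as $T_yO_y$ is spanned by the $V_{\xi'}(y)$, this says precisely that $\alpha(\xi)$ already vanishes on $T_yO_y$, for every $\xi\in\fg$.

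Combining the two, suppose $\xi\in\fl$ with $\beta(\xi)=0$; then $\alpha(\xi)$ vanishes on $T_yS_y$ by hypothesis and on $T_yO_y$ by isotropy, hence on the whole of $T_yY=T_yO_y\oplus T_yS_y$, so $\alpha(\xi)=0$. Injectivity of $\alpha|_{\fl}$ then forces $\xi=0$. I do not expect a genuine obstacle here; the only points that require care are verifying the direct-sum decomposition of $T_yY$ — which is exactly where the hypothesis $H=G_x$ (equivalently, $x$ being an $H$-fixed point of the slice $S$) enters — and the elementary bookkeeping that vanishing on $T_yS_y$ together with the automatic vanishing on $T_yO_y$ amounts to vanishing on all of $T_yY$.
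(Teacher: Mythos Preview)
Your proposal is correct and is essentially the paper's own argument: both use the decomposition $T_yY = T_yO_y \oplus T_yS_y$ together with the isotropy of $O_y$ (Lemma~\ref{lem.T_yO_y-isotropic}) to show that $\alpha(\xi)$ vanishing on $T_yS_y$ forces $\alpha(\xi)=0$. The only cosmetic difference is that you invoke the injectivity of $\alpha|_{\fl}$ directly, whereas the paper unpacks this as ``$V_\xi(y)\neq 0$ and $\omega_{Y,y}$ non-degenerate'' and phrases the whole thing as a contradiction.
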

	
	\begin{proof}
		Assume to the contrary that $\beta|_{\fl}$ is not injective. Then
		there exists a non-zero element $\xi\in \fl$ such that 
		\[
		dH_{\xi}(y)(T_y S_y) = \omega_{Y,y}(V_{\xi}(y), T_y S_y) = 0.
		\]
		On the other hand, since $0\not=V_{\xi}(y)\in T_y O_y$ and $O_y$ is isotropic with respect to $\omega_{Y,y}$ by Lemma \ref{lem.T_yO_y-isotropic}, one gets
		\[
		dH_{\xi}(y)(T_y O_y) = \omega_{Y,y}(V_{\xi}(y), T_y O_y) = 0.
		\]
		In particular, as $T_y Y\cong
		T_y S_y \oplus T_y O_y$, one derives 
		\[
		\omega_{Y,y}(V_{\xi}(y),T_y Y) = 0,
		\]
		which is a contradiction because $\omega_{Y,y}$ is non-degenerate.
	\end{proof}

	Let $L$ be the subspace of $\fm_{S_y,y}\subset \cO(S_y)$ generated by $H_{\xi}$ with
	$\xi\in \fl$. By Lemma \ref{lem.Injectivity-restrict-to-S_y}, we have $L\cap
	\fm_{S_y,y}^2=\{0\}$. In particular, since $L$ and $\fm^2_{S_y,y}$ are
	$H$-stable subspaces of $\fm_{S_y,y}$ and $H$ is reductive, we may choose a $H$-stable complement $F$ of
	$L\oplus \fm^2_{S_y}$ in $\fm_{S_y,y}$. Then the natural inclusion 
	\[
	F\oplus L\longrightarrow \fm_{S_y,y} \subset \cO(S_y)
	\]
	induces a dominant morphism $\nu\colon S_y\rightarrow (F\oplus L)^*=F^*\oplus
	L^*$ and a linear isomorphism
	\[
	T_y^* S_y \cong \fm_{S_y,y}/\fm_{S_y,y}^2 \cong F\oplus L.
	\]
	Clearly  $\nu$ is \'etale at $y$ and  maps the fixed point $y$ to $0\in F^*\oplus L^*$. Then Luna's fundamental lemma \cite{Luna1975} implies that $\nu$ is an excellent morphism in an open affine $H$-saturated neighborhood of $y$. Thus, we may assume that $\nu$ itself is excellent after shrinking $S_y$. Now, let $U$
	be the preimage $\nu^{-1}(F^*)$. Then $U$ is
	non-singular since $\nu$ is \'etale.
	
	\begin{lem}
		\label{lem.U-fiber}
		The variety $U$ is the fiber over $0\in \fl^*$ of the following natural
		composition:
		\[
		S_y \longrightarrow Y \xlongrightarrow{\mu_Y} \fg^* \longrightarrow \fl^*.
		\]
	\end{lem}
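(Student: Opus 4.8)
The plan is to unwind the definitions and identify the two subvarieties $U = \nu^{-1}(F^*)$ and the fiber $(\mu_Y|_{S_y})^{-1}(0) \cap \ker(\fg^* \to \fl^*)$ inside $S_y$, both at the level of ideals in $\cO(S_y)$ and then globally. Write $q\colon \fg^* \to \fl^*$ for the dual of the inclusion $\fl \hookrightarrow \fg$, so that the composition in the statement is $\lambda \coloneqq q \circ \mu_Y|_{S_y}\colon S_y \to \fl^*$; concretely, $\lambda$ corresponds to the linear map $\fl \to \cO(S_y)$, $\xi \mapsto H_\xi|_{S_y}$, i.e. its image ideal is generated by the subspace $L \subset \fm_{S_y,y}$. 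On the other hand, by construction $\nu\colon S_y \to F^* \oplus L^*$ is induced by the inclusion $F \oplus L \hookrightarrow \cO(S_y)$, and the second projection $\nu_L\colon S_y \to L^*$ is exactly the map induced by $L \hookrightarrow \cO(S_y)$. Thus $\nu_L = \lambda$ under the identification $L^* \cong \fl^*$ given by the isomorphism $\alpha|_\fl \colon \fl \xrightarrow{\sim} L$ (equivalently $\xi \mapsto H_\xi|_{S_y}$), and therefore $U = \nu^{-1}(F^*) = \nu_L^{-1}(0) = \lambda^{-1}(0)$, which is precisely the claimed fiber.

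First I would make the identification $\nu_L = \lambda$ precise: the map $\nu$ was defined via the inclusion $F \oplus L \hookrightarrow \fm_{S_y,y} \subset \cO(S_y)$, so composing with the projection $F^* \oplus L^* \to L^*$ kills the $F$-coordinates and leaves the morphism determined by $L \hookrightarrow \cO(S_y)$; since $L$ is spanned by $\{H_\xi|_{S_y} : \xi \in \fl\}$ and $\alpha|_\fl\colon \fl \to L$, $\xi \mapsto [H_\xi|_{S_y}]$ is an isomorphism (Lemma \ref{lem.Injectivity-restrict-to-S_y}, upgraded from cotangent level to the spanning subspace $L$ by $L \cap \fm_{S_y,y}^2 = 0$), this morphism is the same as $\lambda = q \circ \mu_Y|_{S_y}$ up to the linear isomorphism $L^* \cong \fl^*$. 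Next I would note that $\nu^{-1}(F^*)$, as the preimage of the linear subspace $F^* = F^* \oplus \{0\}$, equals $\nu_L^{-1}(0)$. Combining, $U = \lambda^{-1}(0)$, which is the assertion.

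The only mild subtlety — and the step I would write out most carefully — is checking scheme-theoretically (not just set-theoretically) that $\nu^{-1}(F^* \oplus 0)$ agrees with $\lambda^{-1}(0)$: one must confirm that the ideal of $U$ in $\cO(S_y)$ is generated by $L$ (the image of $\nu_L^\sharp$) and not by something larger, which is immediate since $F^* \oplus 0 \subset F^* \oplus L^*$ is cut out by the coordinate functions on $L^*$, i.e. by $L$, and $\nu^\sharp$ sends these to the generators of the ideal of $U$. No genuine obstacle arises; the content is purely bookkeeping with the three identifications $T^*_y S_y \cong \fm_{S_y,y}/\fm_{S_y,y}^2 \cong F \oplus L$ and $\alpha|_\fl\colon \fl \xrightarrow{\sim} L$, already established above. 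I would also remark, for use in the sequel, that since $\nu$ is excellent (in particular étale) the subvariety $U$ is non-singular and $H$-stable, and $y \in U$ because $H_\xi(y) = 0$ for all $\xi$.
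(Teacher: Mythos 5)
Your proof is correct and takes essentially the same approach as the paper: the paper's entire proof is the one-line observation that $L$ is generated by the functions $H_{\xi}|_{S_y}$, $\xi\in\fl$, and your write-up simply unwinds the resulting ideal-theoretic bookkeeping (both $U=\nu^{-1}(F^*)$ and the fiber of the composition are cut out in $\cO(S_y)$ by the same subspace $L$). The one small notational slip is calling the isomorphism $\fl\xrightarrow{\sim}L$, $\xi\mapsto H_\xi|_{S_y}$, by the name $\alpha|_{\fl}$, whereas the paper's $\alpha|_{\fl}$ has codomain $T_y^*Y$; this does not affect the argument.
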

	
	\begin{proof}
		Note that $L$ is generated by the functions $H_{\xi}|_{S_y}$, $\xi\in \fl$. 
	\end{proof}
	
	\subsection{Proof of Theorem \ref{thm.Symplectic-Slice}}

	\begin{lem}
		\label{lem.non-degen-symp-form}
		The restriction of $\omega_Y$ to $U$ is symplectic in an open neighborhood of
		$y$.
	\end{lem}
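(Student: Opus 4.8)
The plan is to reduce the non-degeneracy of $\omega_Y|_U$ to a linear-algebra statement at the single point $y$. Since $\omega_Y$ is closed, so is $\omega_Y|_U$; and $U$ is non-singular ($\nu$ being étale), so the locus on $U$ where a closed $2$-form is non-degenerate is open. Hence it suffices to prove that $\omega_{Y,y}$ restricts to a non-degenerate bilinear form on $T_y U$, and then the desired open neighbourhood of $y$ exists automatically.

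First I would compute $T_y U$. By Lemma \ref{lem.U-fiber}, the subvariety $U$ is the fibre over $0\in\fl^{*}$ of the composite $S_y \to Y \xrightarrow{\mu_Y} \fg^{*} \to \fl^{*}$; since the scheme-theoretic fibre of a morphism over a point has Zariski tangent space equal to the kernel of the differential, and since $U$ is non-singular, we get $T_y U = \ker\big(d_y\colon T_y S_y \to \fl^{*}\big)$. Using $d_yH_\xi(v)=\omega_{Y,y}(V_\xi(y),v)$, which follows from (H2) exactly as in the proof of Lemma \ref{lem.Prop-of-Shell}, this map sends $v\in T_y S_y$ to the functional $\xi\mapsto \omega_{Y,y}(V_\xi(y),v)$ on $\fl$; since $\{V_\xi(y):\xi\in\fl\}$ spans $T_yO_y$, we conclude
\[
T_y U \;=\; T_y S_y \cap (T_y O_y)^{\perp},
\]
the orthogonal taken with respect to $\omega_{Y,y}$ inside $T_y Y$.

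Now set $W=T_y O_y$ and $P=T_y S_y$, so that $T_y Y = W\oplus P$ by the bundle structure $Y=G\times_H S$, with $W$ isotropic by Lemma \ref{lem.T_yO_y-isotropic}. An element $V_\xi(y)\in W$ with $\xi\in\fl$ that lies in $P^{\perp}$ satisfies $\beta(\xi)=0$, hence $\xi=0$ by Lemma \ref{lem.Injectivity-restrict-to-S_y}; thus $P^{\perp}\cap W=0$, and passing to orthogonals gives $P+W^{\perp}=T_y Y$, whence $\dim(P\cap W^{\perp})=\dim P-\dim W$. The composite $P\cap W^{\perp}\hookrightarrow W^{\perp}\twoheadrightarrow W^{\perp}/W$ has kernel $P\cap W=0$, and its source has dimension $\dim P-\dim W=\dim T_y Y-2\dim W=\dim(W^{\perp}/W)$, so it is an isomorphism. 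As $W$ is isotropic, $\omega_{Y,y}$ descends to a non-degenerate symplectic form on $W^{\perp}/W$; pulling it back along this isomorphism recovers $\omega_{Y,y}|_{T_y U}$, which is therefore non-degenerate. This also yields, for free, the natural symplectic identification $T_y U\cong (T_y O_y)^{\perp}/T_y O_y$ invoked in Lemma \ref{lem.FPIG-pri-stratum}.

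The only subtlety is the bookkeeping in the last paragraph, namely recognising $T_y U=P\cap W^{\perp}$ as the symplectic (isotropic) reduction $W^{\perp}/W$; but once the two orthogonality identifications $T_y U=P\cap W^{\perp}$ and $P^{\perp}\cap W=0$ are in hand, this is routine symplectic linear algebra, and everything else in the argument is formal.
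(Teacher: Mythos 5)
Your proof is correct and takes essentially the same approach as the paper: both reduce by openness to the single point $y$, identify $T_yU$ as a complement to the isotropic subspace $T_yO_y$ inside $(T_yO_y)^{\perp}$ via a dimension count, and conclude non-degeneracy from the resulting isomorphism with the symplectic reduction $(T_yO_y)^{\perp}/T_yO_y$. One small redundancy: your use of Lemma \ref{lem.Injectivity-restrict-to-S_y} to prove $P^{\perp}\cap W=0$ (hence $P+W^{\perp}=T_yY$) is not needed, since $P+W^{\perp}\supset P+W=T_yY$ follows directly from the isotropy $W\subset W^{\perp}$ and the splitting $T_yY=P\oplus W$.
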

	
	\begin{proof}
		By the openness of the non-degeneracy of $\omega_Y|_U$, it is enough to prove the non-degeneracy
		of $\omega_Y|_U$ at $y$.
		
		Consider the linear map $\alpha|_{\fl}\colon \fl\rightarrow T_y Y \rightarrow T_y^* Y$. Then the
		image of $\fl$ in $T_y Y$ is exactly $T_y O_y$. Let us identify $\fl$ with its
		image in $T_y^* Y$; that is, the subspace generated by $dH_{\xi}(y)$ for $\xi\in \fl$.
		Let $\Ann(\fl)\subset T_y Y$ be the annilhilator of $\fl$, which contains the tangent space $T_y U$ by the construction of $U$. In particular, since $T_y O_y$
		is isotropic with respect to $\omega_{Y,y}$, a dimension count then yields
		\[
		\Ann(\fl) = (T_y O_y)^{\perp_{\omega_{Y,y}}} = T_y O_y \oplus T_y U.
		\]
		The result follows because $T_y O_y$ is isotropic with respect to $\omega_{Y,y}$ by Lemma \ref{lem.T_yO_y-isotropic}.
	\end{proof}

	\begin{proof}[Proof of Theorem \ref{thm.Symplectic-Slice}]
		According to Lemma \ref{lem.non-degen-symp-form}, after removing the degeneracy locus of $\omega_Y|_U$ in $U$ if necessary, we may assume that $U\subset Y$ is a non-singular affine Hamiltonian $H$-variety such that the following diagram commutes:
		\[
		\begin{tikzcd}[row sep=large,column sep=large]
			Y \arrow[r,"{\mu_Y}"] 
			& \fg^* \arrow[d]  \\
			U  \arrow[r,"{\mu_U}"]  \arrow[u]
			& \fh^*
		\end{tikzcd}
		\]
		The closed subscheme $S_y\cap N_Y$ of $S_y$ is defined by the functions $H_{\xi}|_{S_y}$, $\xi\in \fg$. On the other hand, $U$ is defined by the functions $H_{\xi}|_{S_y}$, $\xi\in \fl$ and the commutative diagram above says that $N_U$ is defined by the functions $H_{\xi}|_U$, $\xi\in \fh$. As $\fg=\fh\oplus \fl$, the natural inclusion
		\[
		N_U^{\red} \longrightarrow (S_y\cap N_Y)^{\red}
		\]
		is actually an open embedding. Finally the statement follows from the fact that the restriction $\varphi|_{\varphi^{-1}(Z)}\colon \varphi^{-1}(Z)\rightarrow Z$ to any $G$-invariant closed subset $Z\subset X$ is again excellent.
	\end{proof}
	
	The following result is a by-product of the proof of Theorem \ref{thm.Symplectic-Slice}.
	
	\begin{cor}
		\label{cor.Hamiltonian-slice-linear}
		In Theorem \ref{thm.Symplectic-Slice}, there exists an \'etale slice $S$ containing $U$, a $G_x$-stable decomposition 
		\[
		T_x X = \fg/\fg_x \oplus T_x U \oplus (\fg/\fg_x)^*
		\]
		and a commutative diagram with excellent $G_x$-equivariant horizontal maps:
		\[
		\begin{tikzcd}[row sep=large, column sep=large]
			S \arrow[r,"\nu"]        &   T_x U\oplus (\fg/\fg_x)^* \\
			U \arrow[r,"{\nu|_U}"] \arrow[u,hookrightarrow]  &   T_x U \arrow[u,hookrightarrow].
		\end{tikzcd}
		\]
	\end{cor}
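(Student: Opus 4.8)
The plan is to collect the identifications already produced in the proof of Theorem~\ref{thm.Symplectic-Slice}. Recall from that proof the $H$-stable decomposition $\fg=\fh\oplus\fl$, with $H=G_x$ and $\fh=\fg_x$; the subspace $L\subset\fm_{S_y,y}$ spanned by the functions $H_\xi|_{S_y}$ for $\xi\in\fl$; an $H$-stable complement $F$ of $L\oplus\fm_{S_y,y}^2$ in $\fm_{S_y,y}$; and the excellent $H$-equivariant morphism $\nu\colon S_y\to(F\oplus L)^*=F^*\oplus L^*$ with $\nu(y)=0$ and $U=\nu^{-1}(F^*)$. Via the excellent (hence \'etale) morphism $\varphi$ and the canonical isomorphism $\varphi|_{S_y}\colon S_y\to S$ onto the \'etale slice $S\subset X$, we identify $S_y$, $U$, $y$ with $S$, $U$, $x$; in particular $S$ is an \'etale slice containing $U$, as required.

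First I would identify $L^*$ with $(\fg/\fg_x)^*$. The $H$-equivariant linear map $\fl\to L$, $\xi\mapsto H_\xi|_{S_y}$, is surjective by the definition of $L$, and injective: if $H_\xi|_{S_y}=0$ then $\beta(\xi)=0$, hence $\xi=0$ by Lemma~\ref{lem.Injectivity-restrict-to-S_y} (using $L\cap\fm_{S_y,y}^2=0$). Thus $L\cong\fl\cong\fg/\fh=\fg/\fg_x$ as $H$-modules, and dually $L^*\cong(\fg/\fg_x)^*$. Next I would identify $F^*$ with $T_xU$: since $\nu$ is \'etale at $y$ and $U=\nu^{-1}(F^*)$, the differential $d_y\nu$ restricts to an isomorphism $T_yU\cong F^*$, and $T_yU\cong T_xU$ through $\varphi$. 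Feeding these two identifications into the target of $\nu$ turns $\nu$ into an excellent $H$-equivariant morphism $S\to T_xU\oplus(\fg/\fg_x)^*$ whose restriction to $U=\nu^{-1}(T_xU)$ is the asserted map $\nu|_U\colon U\to T_xU$; this restriction is again excellent, being the restriction of an excellent morphism to the preimage of the $H$-invariant closed subset $F^*\subset F^*\oplus L^*$ --- the very fact invoked at the end of the proof of Theorem~\ref{thm.Symplectic-Slice}. The resulting square commutes tautologically, since $U\hookrightarrow S$ and $\nu|_U$ are literal restrictions.

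It remains to produce the $G_x$-stable decomposition of $T_xX$. Since $\varphi$ is \'etale with open image, $d_y\varphi$ gives an $H$-equivariant isomorphism $T_xX\cong T_yY$; the $G$-equivariant projection $Y=G\times_H S_y\to G/H$, together with the isomorphism $O_y\cong G/H$ (valid because $G_y=H$), yields an $H$-stable splitting $T_yY=T_yO_y\oplus T_yS_y$ with $T_yO_y\cong\fg/\fh=\fg/\fg_x$. Finally $T_y^*S_y\cong\fm_{S_y,y}/\fm_{S_y,y}^2\cong F\oplus L$, so dually, through $d_y\nu$, we get $T_yS_y\cong F^*\oplus L^*\cong T_xU\oplus(\fg/\fg_x)^*$; combining gives $T_xX\cong\fg/\fg_x\oplus T_xU\oplus(\fg/\fg_x)^*$, and all maps are $H=G_x$-equivariant because $H$ is reductive and every space in sight is an $H$-module. (Alternatively one reaches the same splitting symplectically: $T_yO_y$ is isotropic with $(T_yO_y)^{\perp_{\omega_{Y,y}}}=T_yO_y\oplus T_yU$ by the proof of Lemma~\ref{lem.non-degen-symp-form}, so $T_yY/(T_yO_y)^{\perp}\cong(T_yO_y)^*\cong(\fg/\fg_x)^*$, and an $H$-stable lift finishes the job.) I expect no genuine obstacle here; the only point deserving care is that the successive shrinkings of $S_y$ be performed $H$-saturatedly and compatibly with $S$, exactly as in the proof of Theorem~\ref{thm.Symplectic-Slice}.
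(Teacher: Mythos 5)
Your proposal is correct and takes essentially the same approach as the paper: it reassembles the constructions already set up in the proof of Theorem~\ref{thm.Symplectic-Slice} (the $H$-stable splitting $\fg=\fh\oplus\fl$, the subspaces $F$, $L$, and the excellent morphism $\nu\colon S_y\to F^*\oplus L^*$) into the stated decomposition and diagram. You simply spell out the identifications $L\cong\fl\cong\fg/\fg_x$ (via Lemma~\ref{lem.Injectivity-restrict-to-S_y}), $F^*\cong T_xU$, and the splitting $T_xX\cong T_xO_x\oplus T_yS_y$ more explicitly than the paper's terse two-line proof.
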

	
	\begin{proof}
		Since $\varphi$ is a $G$-equivariant \'etale morphism, it follows from the paragraph before Lemma \ref{lem.U-fiber} that there exists a $G_x$-stable decomposition
		\[
		T_x X \cong T_x O_x \oplus F^* \oplus L^*,
		\]
		such that $L^*\cong \fl^*$ and $F^*\cong T_x U$ as $G_x$-modules. So the statement follows from the fact that $T_x O_x$ and $\fl$ are isomorphic to $\fg/\fg_x$ as $G_x$-modules.
	\end{proof}
	
	\bibliographystyle{alpha}
	\bibliography{SQ}
\end{document}